\definecolor{black}{rgb}{0.0, 0.0, 0.0}
\definecolor{red}{rgb}{1.0, 0.5, 0.5}
\newcommand{\margnote}[1]{
\ifthenelse{\boolean{shownotes}}%
{\marginpar{\raggedright\tiny\texttt{#1}}}%
{}%
}
\newcommand{\hole}[1]{
\ifthenelse{\boolean{shownotes}}%
{\begin{center} \fbox{ \rule {.25cm}{0cm} \rule[-.1cm]{0cm}{.4cm}
\parbox{.85\textwidth}{\begin{center} \texttt{#1}\end{center}} \rule
{.25cm}{0cm}}\end{center}} {} }
\title[Mean-field limit for collective behavior models with sharp sensitivity regions]{Mean-field limit for collective behavior models with sharp sensitivity regions}
\author[Carrillo]{Jos\'{e} A. Carrillo}
\address[Jos\'{e} A. Carrillo]{\newline Department of Mathematics
    \newline Imperial College London, London SW7 2AZ, United Kingdom}
\email{carrillo@imperial.ac.uk}
\author[Choi]{Young-Pil Choi}
\address[Young-Pil Choi]{\newline Fakult\"at f\"ur Mathematik
    \newline Technische Universit\"at M\"unchen, Boltzmannstra{\ss}e 3, 85748, Garching bei M\"unchen, Germany}
\email{ychoi@ma.tum.de}
\author[Hauray]{Maxime Hauray}
\address[Maxime Hauray]{\newline I2M, \newline
    Universit\'e d'Aix-Marseille, Centrale Marseille et CNRS UMR 7373, Marseille, France}
\email{maxime.hauray@univ-amu.fr}
\author[Salem]{Samir Salem}
\address[Samir Salem]{\newline I2M, \newline
    Universit\'e d'Aix-Marseille, Centrale Marseille et CNRS UMR 7373, Marseille, France}
\email{samir.salem@univ-amu.fr}
\numberwithin{equation}{section}
\newtheorem{theorem}{Theorem}[section]
\newtheorem{lemma}{Lemma}[section]
\newtheorem{corollary}{Corollary}[section]
\newtheorem{proposition}{Proposition}[section]
\newtheorem{remark}{Remark}[section]
\newtheorem{definition}{Definition}[section]
\newcommand{\R}{\mathbb R}
\newcommand{\N}{{\mathbb N}}
\newcommand{\bbr}{\mathbb R}
\newcommand{\pp}{\mathcal P}
\newcommand{\e}{\varepsilon}
\newcommand{\mt}{\mathcal{T}}
\newcommand{\mc}{\mathcal{C}}
\newcommand{\ep}{\varepsilon}
\newcommand{\lt}{\left}
\newcommand{\rt}{\right}
\newcommand{\pa}{\partial}
\newcommand{\mb}{\mathbf{1}}
\newcommand{\bq}{\begin{equation}}
\newcommand{\eq}{\end{equation}}
\newcommand{\om}{\Omega}
\newcommand{\mmz}{\mathcal{Z}}
\newcommand{\mmb}{\mathcal{B}}
\def\charf {\mbox{{\text 1}\kern-.30em {\text l}}}
\def\wpa{\widetilde \partial}
\begin{document}
%%%%%%%%%%%%%%%%
\allowdisplaybreaks

\date{\today}

%\subjclass{92D25,74A25,76N10}

%\keywords{}

\begin{abstract} 
We rigorously show the mean-field limit for a large class of swarming individual based models with local sharp sensitivity regions. For instance, these models include nonlocal repulsive-attractive forces locally averaged over sharp vision cones and Cucker-Smale interactions with discontinuous communication weights. We construct global-in-time defined notion of solutions through a differential inclusion system corresponding to the particle descriptions. We estimate the error between the solutions to the differential inclusion system and weak solutions to the expected limiting kinetic equation by employing tools from optimal transport theory. Quantitative bounds on the expansion of the 1-Wasserstein distance along flows based on a weak-strong stability estimate are obtained. We also provide different examples of realistic sensitivity sets satisfying the assumptions of our main results. 
\end{abstract}

\maketitle \centerline{\date}

%\tableofcontents

%%%%%%%%%%%%%%%%%%%%%%%%%%%%%%%%%%%%%%%%%%%%%%%%%%%%%%%%%%%%%%%%%%%%%%%%%%%%%%%%%5
%
%
%                        Section: Introduction
%
%
%%%%%%%%%%%%%%%%%%%%%%%%%%%%%%%%%%%%%%%%%%%%%%%%%%%%%%%%%%%%%%%%%%%%%%%%%%%%%%%%%
\section{Introduction}\label{intro}

In this work, we deal with the rigorous derivation of kinetic models of collective motion when sharp sensitivity regions are considered. Most of the minimal particle-like models for swarming include the effects of attraction, repulsion, and alignment by means of local averages of these kind of forces around the location or velocity of the individuals. These Individual Based Models (IBMs) with local sensitivity regions have been proposed in the theoretical biology literature \cite{HH,HCH} as well as in the applied mathematical community \cite{LLE,AIR}.
For instance, a basic model introducing the three effects combines the repulsive-attractive forces modeled by an effective pairwise interaction potential $\varphi(x)$ and the gregariousness behavior of individuals by locally averaging their relative velocities with weights depending on the interindividual distances, see \cite{CFTV,ABCV} for instance. These effects lead to pattern formations such as localized flocks, which are stable for the particle dynamics as proven in \cite{ABCV,CHM}. This model for alignment is usually referred as the Cucker-Smale model for velocity consensus \cite{CS} that has been throughly studied in the last years, see for instance \cite{CFTV}. The Cucker-Smale model averages the relative velocities through a coupling function $h(v)$, linear in the original model, and a communication weight function $\psi(x)$ weighting the relative importance in the orientation and speed averaging according to the interparticle distances. In summary, given $N$ particles the evolution of their positions $X_i(t)$ and velocities $V_i(t)$ is determined by the 2nd order ODE system:
\begin{equation}\label{odesintro}
\left\{ \begin{array}{ll}
\dot{X}_i(t) = V_i(t), \quad i=1,\cdots,N,~~t > 0, &  \\[2mm]
\displaystyle\dot{V}_i(t) = \frac1N \sum_{j \neq i} \left[\psi(X_i-X_j) h(V_j - V_i) 
+ \nabla_x \varphi(X_i-X_j)\right] \mb_{K(V_i)}(X_i-X_j) 
&\\
\left( X_i(0), V_i(0) \right) =: \left( X_i^0, V_i^0 \right),\quad i=1,\cdots,N. & 
\end{array} \right.
\end{equation}
Here, $\mb_{K(v)}$ is the indicator function on the set $K(v)\subset \R^d$. The local averaging for attraction, repulsion and reorientation is done on velocity dependent sets $K(v)$ that we refer as the sensitivity regions, a typical case being vision cones for animals such as birds or mammals. However, sensitivity regions can highly vary across animals due to their different sensory organs, for instance pressure waves through lateral lines for fish or visual field for birds could depend a lot on the particular specie, see for instance \cite{KTIHC,LLE2}.

In this work, we are interested in the continuum mean-field models obtained from these IBMs in the large particle limit, i.e. when $N\to\infty$. The mean-field limit in statistical mechanics is obtained when the total force felt by each individual is of unit order when $N\to\infty$, this means that the forces exerted by each individual on others have to be scaled inversely proportional to the number of individuals, see \cite{Spohn} for further explanations about the mean-field scaling. The mean-field limit leads as usual to kinetic equations of Vlasov-type. Their formal derivation has been done for IBMs of collective behavior introducing these kind of sensitivity regions by different authors \cite{HT,CFTV,AIR,AP,MT,MT2,Has} using BBGKY hierarchies arguments. These  models include locally averaged interactions in the metric sense \cite{HT,CFTV,AIR,MT,MT2} and in the topological sense \cite{AP, Has}. 

On the other hand, trying to prove rigorously that this mean-field limit holds or equivalently trying to show rigorously the validity of the kinetic equations obtained from the formal BBGKY hierarchy procedure is a very challenging question. The lack of enough regularity of the velocity field, due to the sharpness of the sensitivity regions, prevents classical arguments for deriving rigorously the mean-field limit such as \cite{dobru,BH,Neun,Spohn,Szni,Glose} to be applicable. The different variants of these strategies introduced in \cite{HL,BCC,CCR1,CCR2} deal with locally Lipschitz velocity fields to allow swarming systems such as the Cucker-Smale alignment models \cite{CS,PKH,ACHL}. However, these arguments do not apply either for discontinuous sharp interaction regions between individuals. There have been several important works dealing with singular interaction forces either at the first order or the second order models \cite{Hauray,HJ1,HJ2,CCH,CCH2} but none dealing with discontinuous averaging regions.

Our work is the first in showing the rigorous proof of the mean-field limit of a system of interacting particles where each particle only interacts with those inside a local region whose shape depends on the position and velocity of the particle. Even the meaning of the particle model is not clear due to the discontinuous right-hand side of the dynamical system. We can show that Filippov's theory \cite{Filippov} allows us to have a well-defined notion of solutions via differential inclusions. We will exemplify our main results in the Cucker-Smale model \cite{CS} with local sensitivity regions, given by 
\begin{equation}\label{sys_kin}
\pa_t f + v \cdot \nabla_x f + \nabla_v \cdot (F(f)f) = 0, \quad (x,v) \in \R^d \times \R^d,\,\, t > 0,
\end{equation}
with the initial data:
\begin{equation}\label{ini_sys_kin}
f(x,v,t)|_{t=0} = f_0(x,v).
\end{equation}
Here, $F(f)$ is an alignment force trying to average the relative velocities as
\[
F(f)(x,v,t) = \int_{\R^d \times \R^d} \mb_{K(v)}(x-y) (w - v) f(y,w)\,dydw\,.
\]
Under suitable assumptions of the local sensitivity regions $K(v)$, we will show that the velocity field associated to an $L^1-L^\infty$ density $f$ is locally Lipschitz continuous. This is the basic property that allows for a weak-strong quantitative stability estimate between weak solutions of the kinetic continuum model \eqref{sys_kin} and the particle solutions of the discrete differential inclusion system. This estimate implies readily the mean-field limit derivation of the kinetic equation \eqref{sys_kin}. These results are easily generalizable for the general kinetic model associated to \eqref{odesintro}: 
\begin{equation*}
\left\{ \begin{array}{ll}
\pa_t f + v \cdot \nabla_x f + \nabla_v \cdot (F(f)f) = 0, \qquad (x,v)\in \R^d \times \R^d, \quad t > 0,& \\[2mm]
f(x,v,t)|_{t=0}=f_0(x,v)\qquad (x,v) \in \R^d \times \R^d,&  
\end{array} \right.
\end{equation*}
with
\begin{equation*}
F(f)(x,v,t) = \int_{\R^d \times \R^d} \left[\psi(x-y) h(w - v) + \nabla_x \varphi(x-y)\right] \mb_{K(v)}(x-y) f(y,w)\,dydw .
\end{equation*}

This paper is organized as follows: we first show the existence of the differential inclusion system and the crucial  Lipschitz property of $F(f)$ for $L^1-L^\infty$ densities $f$ under very reasonable but technical assumptions on the sensitivity regions. We explain carefully the necessary assumptions on the sensitivity sets in Section 2 before we state the main results of this work. Section 3 is devoted to show the weak-strong stability estimate and the mean-field limit. Section 4 shows the well-posedness of weak solutions for system \eqref{sys_kin}-\eqref{ini_sys_kin} under the same assumptions on the sensitivity regions. Section 5 gives some typical examples of local sensitivity regions verifying our assumptions commonly used in applications in mathematical biology. Finally, we extend these results to general locally averaged potential and alignment models on sharp sensitivity regions in Section 6.

%%%%%%%%%%%%%%%%%%%%%%%%%%%%%%%%

\section{Preliminaries \& Main Results}

In order to deal with the differential inclusion system associated to the IBMs discussed in the introduction and their associated kinetic equations, we first need some notation to treat rigorously the dependence on the sensitivity sets.

\subsection{Sensitivity Regions: Notation}
\begin{definition}Let $K \subset \R^d$ be a non-empty compact set and $\e >0$. We define the $\e$-boundary of $K$ by:
\[
\partial^{\e}K:= \pa K + \overline{B(0,\e)}=\left\{x+y \ | \ x\in \partial K, |y|\leq \e \right\},
\] 
and also the $\e$-enlargement(resp. $\e$-reduction) $K^{\e,+}$ (resp. $K^{\e,-}$) by
\[
K^{\e,+}:=\partial^{\e}K\cup K = K +  \overline{B(0,\e)} \quad \mbox{and} \quad K^{\e,-}:=K\setminus \partial^{\e} K.
\]
\end{definition}
Note that $K^{\e,-}$ can be the empty set and that $\pa^\e K = K^{\e,+} \setminus K^{\e,-}$. We sketch these definitions in Figure \ref{fig1}. We denote the Lebesgue measure of set $A$ in $\R^d$ by $|A|$.

\begin{figure}[ht]
        \centering
        \mbox{
         {\includegraphics[width=8cm,height=6cm]{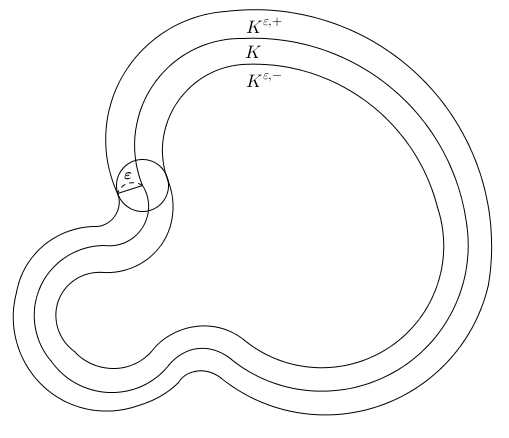}}}
         \caption{A sketch of the sets $K^{\e,-}, K$, and $K^{\e,+}$.}
         \label{fig1}
\end{figure}

We are going to show several elementary estimates, some of them obvious by geometrical arguments.
\begin{lemma}\label{lem_est1}For $\e > 0$ and $\delta > 0$, it holds
\[
(\pa^\e K)^{\delta,+} \subset \pa^{\e + \delta}K.
\]
\end{lemma}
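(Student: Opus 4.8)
The plan is to unravel both sides into Minkowski sums and exploit associativity. By the very definition of the $\e$-enlargement applied to $\pa^\e K$ we have $(\pa^\e K)^{\delta,+} = \pa^\e K + \overline{B(0,\delta)}$, and by the definition of the $\e$-boundary $\pa^\e K = \pa K + \overline{B(0,\e)}$; hence
\[
(\pa^\e K)^{\delta,+} = \left(\pa K + \overline{B(0,\e)}\right) + \overline{B(0,\delta)} = \pa K + \left(\overline{B(0,\e)} + \overline{B(0,\delta)}\right),
\]
where the last equality is just associativity of the Minkowski sum.

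Next I would bound the sum of the two closed balls: if $|a|\le\e$ and $|b|\le\delta$ then $|a+b|\le\e+\delta$ by the triangle inequality, so $\overline{B(0,\e)} + \overline{B(0,\delta)} \subset \overline{B(0,\e+\delta)}$. (In fact equality holds here, since any $z$ with $|z|\le\e+\delta$ can be split as $\tfrac{\e}{\e+\delta}z + \tfrac{\delta}{\e+\delta}z$, but only the inclusion is needed.) Substituting this into the identity above yields
\[
(\pa^\e K)^{\delta,+} \subset \pa K + \overline{B(0,\e+\delta)} = \pa^{\e+\delta} K,
\]
which is exactly the asserted inclusion.

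The argument is immediate once the definitions are expanded, so there is no genuine obstacle. The only point worth a line of justification is that the enlargement operation $(\cdot)^{\delta,+}$, defined in the statement for the compact set $K$, applies verbatim to $\pa^\e K$: this is legitimate because $\pa K$ is closed and bounded, hence compact, and a Minkowski sum of compact sets is again compact, so $\pa^\e K$ is a non-empty compact set whenever $K$ is.
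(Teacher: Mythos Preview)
Your proof is correct. It is also more direct than the paper's own argument: you exploit the Minkowski-sum form $\pa^\e K = \pa K + \overline{B(0,\e)}$ and $A^{\delta,+} = A + \overline{B(0,\delta)}$, so the inclusion reduces to associativity together with $\overline{B(0,\e)} + \overline{B(0,\delta)} \subset \overline{B(0,\e+\delta)}$.

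The paper instead works through the alternative description $\pa^{\e+\delta}K = K^{(\e+\delta),+} \setminus K^{(\e+\delta),-}$: given $x \in (\pa^\e K)^{\delta,+}$ it picks $y \in \pa^\e K$ with $d(x,y)\le\delta$, and then uses triangle-inequality bounds on $d(x,K)$ and $d(x,K^c)$ to show separately that $x \in K^{(\e+\delta),+}$ and $x \notin K^{(\e+\delta),-}$. Your route avoids splitting into these two cases entirely, at the cost of relying on the identity $A^{\delta,+}=A+\overline{B(0,\delta)}$ (which the paper states but does not emphasize). Both arguments are elementary; yours is the shorter one.
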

\begin{proof} For $x \in (\pa^\e K)^{\delta,+}$, there exists a $y \in \pa^\e K$ such that $d(x,y) \leq \delta$ by definition. This yields
\[
d(x,K) \leq d(x,y) + d(y,K) \leq \delta + \e,
\]
i.e., $x \in K^{(\e + \delta),+}$. We now suppose $x \in K^{(\e + \delta),-}$, i.e., $d(x,K^c) > \e + \delta$. Then for all $y \in \pa^\e K$ we obtain
\[
\e + \delta < d(x,K^c) \leq d(x,y) + d(y,K^c) \leq d(x,y) + \e.
\]
This is a contradiction to $d(x,y) \leq \delta$. Hence we have $x \notin K^{(\e + \delta),-}$, and this completes the proof.
\end{proof}

\begin{lemma}\label{lem_est2}For $K\subset \mathbb{R^d}$ and  $x_1,y_1,x_2,y_2 \in \R^d$, define $\e_1 = |x_1 - x_2|$, $\e_2 = |y_1 - y_2|$, we get
\[
|\mb_K(y_1 - x_1) - \mb_K(y_2 - x_2)| \leq \mb_{\pa^{2\max(\e_1,\e_2)}K}(y_1 - x_1) \leq \mb_{\pa^{2 \e_1}K}(y_1 - x_1) + \mb_{\pa^{2 \e_2}K}(y_1 - x_1).
\]
\end{lemma}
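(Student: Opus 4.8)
The plan is to note first that the right-hand inequality requires no work: $2\max(\e_1,\e_2)$ equals either $2\e_1$ or $2\e_2$, so $\mb_{\pa^{2\max(\e_1,\e_2)}K}$ coincides with one of the two indicators on the right, and adding the remaining nonnegative indicator only increases the value. Hence everything reduces to the left-hand inequality. Writing $z_1:=y_1-x_1$ and $z_2:=y_2-x_2$, I would observe that $|\mb_K(z_1)-\mb_K(z_2)|\in\{0,1\}$ and that it equals $1$ exactly when precisely one of $z_1,z_2$ belongs to $K$; in the remaining case the left-hand side vanishes and there is nothing to prove. So assume, say, $z_1\in K$ and $z_2\notin K$.

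The key point is a one-dimensional intermediate-value argument along the segment $[z_1,z_2]$. Since $K$ is compact, hence closed, the supremum $t^\star:=\sup\{t\in[0,1]:z_1+t(z_2-z_1)\in K\}$ is attained and satisfies $t^\star<1$ (otherwise $z_2\in K$), so the point $p:=z_1+t^\star(z_2-z_1)$ lies in $K$ and is simultaneously a limit of points outside $K$, i.e. $p\in\partial K$. Then I would bound
\[
|z_1-p|=t^\star\,|z_1-z_2|\le|z_1-z_2|=\bigl|(y_1-y_2)-(x_1-x_2)\bigr|\le|x_1-x_2|+|y_1-y_2|=\e_1+\e_2\le 2\max(\e_1,\e_2),
\]
which shows $z_1=y_1-x_1\in\partial K+\overline{B(0,2\max(\e_1,\e_2))}=\pa^{2\max(\e_1,\e_2)}K$, so that $\mb_{\pa^{2\max(\e_1,\e_2)}K}(y_1-x_1)=1$ and the left-hand inequality follows. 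The case $z_1\notin K$, $z_2\in K$ is handled identically, replacing the supremum by the infimum of $\{t\in[0,1]:z_1+t(z_2-z_1)\in K\}$.

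I do not expect any genuine obstacle; the only point requiring a little care is invoking closedness of $K$ to be sure that $t^\star$ (resp. the infimum) is realized and produces an honest boundary point rather than merely a point of $\overline K$. If one prefers to avoid the segment argument, an equivalent route uses the identities $K^{\e,+}=\{x:d(x,K)\le\e\}$ and $K^{\e,-}=\{x:d(x,K^c)>\e\}$ (the latter already exploited in the proof of Lemma \ref{lem_est1}): when $z_1\in K$ and $z_2\notin K$ one has $d(z_1,K)=0\le 2\max(\e_1,\e_2)$ and $d(z_1,K^c)\le|z_1-z_2|\le 2\max(\e_1,\e_2)$, hence $z_1\in K^{2\max(\e_1,\e_2),+}\setminus K^{2\max(\e_1,\e_2),-}=\pa^{2\max(\e_1,\e_2)}K$, and symmetrically in the other case.
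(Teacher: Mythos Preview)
Your proof is correct. The alternative route you sketch at the end---showing that when exactly one of $z_1,z_2$ lies in $K$ one has $z_1\in K^{2\max(\e_1,\e_2),+}\setminus K^{2\max(\e_1,\e_2),-}$ via the distance characterizations---is precisely the contrapositive of the paper's case analysis (the paper instead assumes $z_1\notin K^{2\max(\e_1,\e_2),+}$ or $z_1\in K^{2\max(\e_1,\e_2),-}$ and deduces $\mb_K(z_1)=\mb_K(z_2)$). Your primary intermediate-value argument along the segment $[z_1,z_2]$ is a mild geometric variant yielding the same boundary point and the same distance bound $|z_1-p|\le \e_1+\e_2\le 2\max(\e_1,\e_2)$; it buys a more explicit witness $p\in\partial K$ but otherwise differs only in presentation.
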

\begin{proof}Since
$\pa^{2\max(\e_1,\e_2)}K = \pa^{2\e_1} K \cup \pa^{2\e_2} K$, we deduce
\[
\mb_{\pa^{2\max(\e_1,\e_2)}K}(y_1 - x_1) \leq \mb_{\pa^{2\e_1}K}(y_1 - x_1) + \mb_{\pa^{2\e_2}K}(y_1 - x_1).
\]
Thus, it is enough to show
\[
|\mb_K(y_1 - x_1) - \mb_K(y_2 - x_2)| \leq \mb_{\pa^{2\max(\e_1,\e_2)}K}(y_1 - x_1).
\]
We separate cases:

$\bullet$ If $y_1 - x_1 \notin K^{2\max(\e_1,\e_2), +}$, then $d(y_1 - x_1,K) >  2\max(\e_1,\e_2)$ and
\[
|y_1 - x_1 - (y_2 - x_2)| \leq \e_1 + \e_2 \leq 2\max(\e_1,\e_2)\,,
\]
from which we deduce that
\[
y_2 - x_2 \notin K \quad \mbox{and} \quad \mb_K(y_1 - x_1) = \mb_K(y_2 - x_2) = 0.
\]

$\bullet$ If $y_1 - x_1 \in K^{2\max(\e_1,\e_2),-} \subset K$, we get
\[
|y_1 - x_1 - (y_2 - x_2)| \leq 2\max(\e_1,\e_2) < d(y-x,K^c),
\]
and this implies $y_2 - x_2 \in K$. Thus we obtain
\[
\mb_K(y_1 - x_1) - \mb_K(y_2 - x_2) = 0.
\]

$\bullet$ If $y_1 - x_1 \in K^{2\max(\e_1,\e_2), +} \setminus K^{2\max(\e_1,\e_2),-}$, it is trivial to get
\[
|\mb_K(y_1 - x_1) - \mb_K(y_2 - x_2)| \leq \mb_{\pa^{2\max(\e_1,\e_2)}K}(y_1 - x_1).
\]
This concludes the desired result.
\end{proof}

\subsection{Sensitivity Regions: Assumptions}

Let us denote by $A \Delta B$ the symmetric difference between two sets $A \subset \R^d$ and $B \subset \R^d$, i.e., 
\[
A \Delta B = (A\setminus B) \cup (B \setminus A) = (A \cup B)\setminus (A \cap B).
\]
It is obvious that $|\mb_{A}(x) - \mb_{B}(x)| = \mb_{A \Delta B}(x)$.

In this work, we will assume that the sensitivity region set-valued function $K(v)$ depending on the velocity variable satisfies the following conditions: 
\begin{itemize}
\item ${\bf (H1)}$: $K(\cdot)$ is globally compact, i.e., $K(v)$ is compact and there exists a compact set $\mathcal{K}$ such that $K(v)\subseteq \mathcal{K}\,, \forall v\in \mathbb{R}^d$.

\item ${\bf (H2)}$: There exists a family of  closed sets $v \mapsto \Theta(v)$  and a constant $C$ such that:
\begin{itemize}
\item $(i)$ $\partial K(v) \subset \Theta(v) $, for all $v \in \R^d$,
\item $(ii)$ $|\Theta(v)^{\e,+}| \le C \e$, for all $\e \in (0,1)$,
\item $(iii)$ $K(v) \Delta K(w) \subset \Theta(v)^{C |v-w|,+}$, 
for $v,w \in \R^d$,
\item $(iv)$ $\Theta(w) \subset \Theta(v)^{C |v-w|,+}$, 
for $v,w \in \R^d$.
\end{itemize}
\end{itemize}

\begin{remark}
Note that ${\bf (H2)}$-$(ii)$ is satisfied if $\Theta(v)$ is made for piecewise
$\mathcal{C}^1$-compact hypersurfaces. That will be the case in all our applications. In general it roughly means that $\Theta(v)$ has finite $(d-1)$-area, i.e. it has a $(d-1)$-dimensional Hausdorff measure bounded by some constant. 
\end{remark}

We will discuss several examples of sets satisfying these hypotheses in Section \ref{sec:5}: a fixed ball, a ball with a radius depending on the speed, a vision cone in which the angle depends on the speed. Let us just mention here that the hypotheses are fulfilled in the constant case $K(v)=K_0$ when $\partial K_0$ is made of compact $\mathcal{C}^1$ hypersurfaces. In fact, in this case it is enough to choose $\Theta(v) = \partial K_0$, and all the hypothesis are satisfied (many of them are empty in this particular case), see Section \ref{sec:5_1}.

\begin{remark}One of most common choices for sensitivity region in applications is a vision cone. Let us define
\[
C(r,v,\theta) = \lt\{ x : |x| \leq r, \quad \frac{x \cdot v}{|x||v|} \geq \cos \theta\rt\},
\]
with $\theta \in (0, \pi)$, $r > 0$. Note that a cone  with fixed radius and angle is a $0$-homogeneous set-valued function in $v$, that is,
\[
C(r,v,\theta) = C\lt(r,\frac{v}{|v|},\theta\rt) \quad \forall \,v \in \R^d \setminus \{0\}.
\] 
Then, it is not difficult to see that $v \mapsto C(r,v,\theta)$ is discontinuous at the origin, and ${\bf (H2)}$-$(ii),(iii)$ which implies that $| K(v) \Delta K(w) | \le C|v-w|$ cannot be satisfied except in the case where the cone is a ball. This discontinuity due to the vanishing velocity is not important from the modelling view point since most of swarming models have additional terms preventing individuals to stop \cite{CFTV, AIR, AP}. We will discuss in Section {\rm\ref{sec:5}} very simple and mild variations of vision cone regions that do satisfy ${\bf (H2)}$-$(ii)$ and that are innocuous from the modelling viewpoint in swarming.
\end{remark}

\begin{remark} The additional set valued function $\Theta(v)$ is an enlargement of the boundary $\partial K(v)$ for any $v \in \R^d$, and it is requested because we cannot work only with the boundary set $\partial K(v)$ for two reasons. In fact, there is first a natural set, larger than $\partial K(v)$, that appears in many calculations. Let us define it as
\begin{equation}\label{def:wpaK}
\widetilde{\partial}K(v) :=  \lt\{ x \in \R^d, (v,x) \in \partial \widetilde K \rt\} 
, \quad \text{where} \quad \widetilde K := \lt\{ (v,x), \; x \in K(v) \rt\}.
\end{equation}
Thus it is a slice of the boundary of the set $\widetilde K$. Note that $\pa K(v) \subset \widetilde{\partial}K(v)$.

The set $\widetilde{\partial}K(v)$ is in fact important in order to define the particle system associated to discontinuous kernels. More precisely, we need to give a sense to the derivative in time of the particle paths when they cross the boundary of the set $K(v)$. In order to do that, we will work with differential inclusions, and thus we need to take into account the set of admissible slopes at the boundary of the set $K(v)$. We define the set-valued function of admissible slopes as
\[
\mathcal{A}(x,v) = Conv\lt\{ \alpha \in [0,1] : \exists\, (x^n,v^n) \to (x,v) \mbox{ such that } \mb_{K(v^n)}(x^n) \to \alpha\rt\},
\]
for $(x,v) \in \R^d \times \R^d$, where $Conv$ denotes the convex hull of a set. We remark that there are only three different values for $\mathcal{A}(x,v)$: two singleton sets $\{0\}$, $\{1\}$, and the unit interval $[0,1]$. Indeed, we have
\[
\widetilde{\partial}K(v) = \lt\{ x \in \R^d : \mathcal{A}(x,v) = [0,1] \rt\}.
\]

Note that $\widetilde{\partial}K(v)$ can be strictly larger than $\pa K(v)$, see for instance the example of the vision cone in Section~{\rm\ref{sec:5}}. For example, if we take into account the cone $K(v) = C(r,v, \alpha(|v|))$ with fixed $r$ but with varying angles $\alpha(|v|)$ such that $\alpha(|v|) \to \pi$ as $|v| \searrow 1$, then the generalized boundary set $\widetilde{\partial}K(v)$ is strictly larger than $\pa K(v)$ (see Figure {\rm \ref{fig2}}).

\begin{figure}[lt]
        \centering
        \mbox{
         {\includegraphics[width=4.65cm,height=4.55cm]{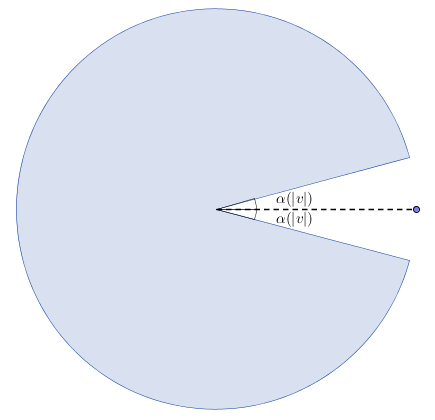}} \quad 
         {\includegraphics[width=4.5cm,height=4.5cm]{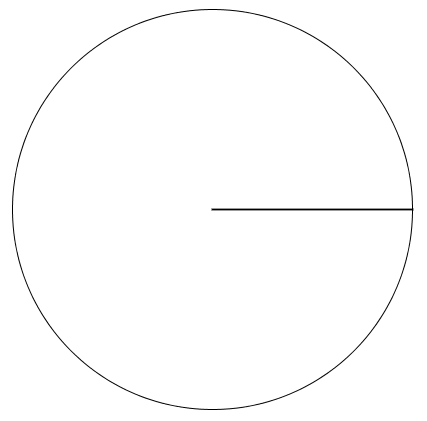}}\quad
         {\includegraphics[width=4.6cm,height=4.6cm]{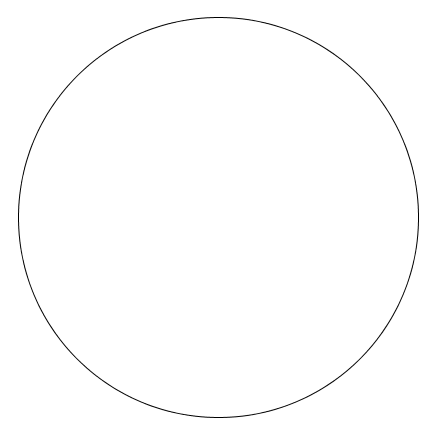}}
         }
         \caption{Evolution of $\widetilde{\partial}K(v)$ when $K(v) = C(r,v,\theta(|v|)$ for (from left to right) $|v|>1$, $|v|=1$ and $|v|<1$. $\wpa K(v)$ is strictly larger than $\partial K(v) $ when $|v|=1$.} 
         \label{fig2}
\end{figure}

On the other hand, we may also need to enlarge a little bit $\wpa K(v)$, since it may not satisfy the hypothesis $({\bf H2})$-$(iv)$, that is more or less a kind of Lipschitz property for the set valued function $\Theta$.
This is why we need to introduce the additional set $\Theta(v)$ which does contain $\wpa K(v)$, as we prove next, even if it is not explicitly written in the assumption $({\bf H2})$. 
\end{remark}

\begin{lemma}\label{rem:BsubThe}
Under the assumptions $(\bf{H2})$-$(i), (ii),$ and $(iii)$, the set valued function $\wpa K$ defined in~\eqref{def:wpaK} satisfies
\[
\wpa K(v)\subseteq \Theta(v) \ , \forall v\in\R^d.
\]
\end{lemma}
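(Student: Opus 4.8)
The plan is to argue by contradiction, using the characterization $\wpa K(v) = \{x \in \R^d : \mathcal{A}(x,v) = [0,1]\}$ recalled above, together with the fact that each $\Theta(v)$ is closed. Suppose $x \notin \Theta(v)$. Then there is $\delta > 0$ with $d(x,\Theta(v)) > \delta$, and by $({\bf H2})$-$(i)$ we also get $d(x,\pa K(v)) \ge d(x,\Theta(v)) > \delta$. Hence the ball $B(x,\delta)$ does not meet $\pa K(v)$; being connected, and since $K(v)$ is closed so that $\R^d$ splits into the disjoint pieces $\mathrm{int}\, K(v)$, $\pa K(v)$ and the open set $\R^d\setminus K(v)$, the ball $B(x,\delta)$ is contained entirely in $\mathrm{int}\, K(v)$ or entirely in $\R^d\setminus K(v)$. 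In either case $\mb_{K(v)}$ is constant on $B(x,\delta)$, equal to some $\alpha_0 \in \{0,1\}$.

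Next I would show that this forces $\mb_{K(v^n)}(x^n) = \alpha_0$ for $n$ large, along \emph{any} sequence $(x^n, v^n) \to (x,v)$. Indeed, for $n$ large enough one has $C|v^n - v| < \delta/2$ and $|x^n - x| < \delta/2$, so $({\bf H2})$-$(iii)$ gives
\[
K(v^n)\,\Delta\, K(v) \;\subset\; \Theta(v)^{C|v^n - v|,+} \;\subset\; \Theta(v) + \overline{B(0,\delta/2)},
\]
while $d(x^n,\Theta(v)) \ge d(x,\Theta(v)) - |x^n - x| > \delta/2$, so $x^n \notin K(v^n)\,\Delta\,K(v)$, i.e.\ $\mb_{K(v^n)}(x^n) = \mb_{K(v)}(x^n) = \alpha_0$ because $x^n \in B(x,\delta)$. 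Consequently the set $\{\alpha : \exists\, (x^n,v^n)\to(x,v),\ \mb_{K(v^n)}(x^n)\to\alpha\}$ reduces to $\{\alpha_0\}$, so $\mathcal{A}(x,v) = Conv\{\alpha_0\} = \{\alpha_0\}$ is a singleton, contradicting $x \in \wpa K(v)$. Hence $x \in \Theta(v)$, which proves the inclusion.

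The only genuinely delicate point is the topological step in the first paragraph — that a ball avoiding $\pa K(v)$ must fall inside $\mathrm{int}\, K(v)$ or inside $\R^d\setminus K(v)$ — which rests on $K(v)$ being closed (guaranteed by $({\bf H1})$, or directly by the compactness of $K(v)$ assumed in $({\bf H2})$); the rest is routine $\e$–$\delta$ bookkeeping with the enlargement notation introduced above. I note in passing that $({\bf H2})$-$(ii)$ does not seem to be needed for this particular inclusion, only $({\bf H2})$-$(i)$ and $({\bf H2})$-$(iii)$.
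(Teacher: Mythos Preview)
Your proof is correct and is essentially the contrapositive of the paper's argument: the paper fixes $x\in\wpa K(v)$, splits into the three cases $x\in\partial K(v)$, $x\in\mathrm{int}\,K(v)$, $x\in K(v)^c$, and in the latter two uses $({\bf H2})$-$(iii)$ on an approximating sequence to push $x$ into $\Theta(v)^{\epsilon,+}$ for arbitrarily small $\epsilon$, then invokes closedness of $\Theta(v)$; you instead start from $x\notin\Theta(v)$, use the same inclusion $K(v^n)\Delta K(v)\subset\Theta(v)^{C|v^n-v|,+}$ to freeze $\mb_{K(v^n)}(x^n)$, and conclude $\mathcal{A}(x,v)$ is a singleton. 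The connectedness step replaces the paper's explicit case split but carries the same content, and your remark that $({\bf H2})$-$(ii)$ is unused is accurate---the paper's proof does not invoke it either.
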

\begin{proof}
Let $v\in\R^d$ and $x\in \widetilde{\partial}K(v)$. By definition of $\widetilde{\partial}K(v)$, there exists two sequences $(x^1_n,v^1_n)_{n \in \N}$ and $(x^2_n,v^2_n)_{n \in \N}$ both converging to $(x,v)$ such that $x_n^1\in K(v_n^1)$ and $x_n^2\notin K(v_n^2)$.

If $x \in \partial K(v)$ then it is also in $\Theta(v)$ by the assumption $(\textbf{H2})$-$(i)$. 
If $x\in \overset{\circ}{K(v)}$, then for $n$ large enough $x_n^2\in \overset{\circ}{K(v)}$. Therefore $x_n^2\in K(v_n^2)\Delta K(v)\subset \Theta(v)^{C|v_n^2-v|,+}$ due to $(\textbf{H2})$-$(iii)$. Thus $x\in \Theta(v)^{C|v_n^2-v|+|x_n^2-x|,+}$, and letting $n$ go to infinity we deduce that $x\in \Theta(v)$ since $\Theta(v)$ is closed. 
If $x\in K(v)^c$, we consider the $(x_n^1,v_n^1)_{n \in \N}$ and repeat the previous step.  
\end{proof} 

\subsection{Particle system: Differential inclusions}\label{sec_23}

A particle approximation of the kinetic equation \eqref{sys_kin} should read as the following system:
\begin{equation}\label{sys_ode}
\left\{ \begin{array}{ll}
\dot{X}_i(t) = V_i(t), & \\[2mm]
\displaystyle \dot{V}_i(t) = -\sum_{j \neq i}m_j \mb_{K_{(V_i)}}(X_i-X_j)(V_i - V_j), & i=1,\cdots,N,~~t > 0,\\[5mm]
\left( X_i(0), V_i(0) \right) =: \left( X_i^0, V_i^0 \right) & i=1,\cdots,N.
\end{array} \right.
\end{equation}
Here $\{ (X_i,V_i) \}_{i=1}^N$ and $\{ m_i\}_{i=1}^N$ are the position, velocity, and weight of $i$-th particles, respectively. We will always assume the normalization condition $\sum_{i=1}^{N}m_i = 1$, with $m_i > 0$, $i=1,\cdots,N$. However, this particle system is not well-defined due to the discontinuous character of the right-hand side. For the mean-field limit, 
we will use the global solutions to the following differential inclusion system generalizing the ODE system \eqref{sys_ode}:
\bq\label{sys_di}
\dot{\mmz}^N (t) \in \mmb^N(\mmz^N (t)),
\eq
where $\mmb^N : \R^{2dN} \to P(\R^{2dN})$ is a set-valued given by
\[
\mmb^N(x_1, \cdots, x_N, v_1, \cdots, v_N) := \{v_1\} \times \cdots \times \{v_N\} \times \mathcal{F}_1(x,v) \times \cdots \times \mathcal{F}_N(x,v),
\]
with
$$
\mathcal{F}_i(x,v):=\bigg\{ \sum_{j \neq i} \alpha_{ij}m_j(v_j - v_i) : \alpha_{ij}\in \mathcal{A}(x_i - x_j,v_i) \bigg\},
$$
for $i \in \{1,\cdots,N\}$. A function $\mmz^N(t)$ is called a solution of the differential inclusion system \eqref{sys_di} with initial condition $\mmz^N(0) = \mmz_0^N$ if 
\[
\mmz^N(t) = \mmz_0^N + \int_0^t \mathcal{F}^N(s)\,ds, \quad t \geq 0, \quad \mbox{and} \quad \mathcal{F}^N(t) \in \mmb^N(\mmz^N(t)) \quad \mbox{a.e.} \quad t \geq 0,
\]
which is equivalent to 
\bq\label{eq_di}
\dot{\mmz}^N(t) = \mathcal{F}^N(t) \quad \mbox{and} \quad \mathcal{F}^N(t) \in \mmb^N(\mmz^N(t)) \quad \mbox{a.e.} \quad t \geq 0.
\eq

Using the following Filippov theory of discontinuous dynamical systems \cite{Filippov}, we will show that the differential inclusion system \eqref{sys_di} has global-in-time solutions.

\begin{theorem}\label{thm_fil}\cite{Filippov} Let $F:(x,t)\in \R^n \times \R_+ \mapsto F(x,t)\in P(\R^n)$ be a set-valued function such that 
\begin{itemize}
\item $F(x,t)$ is nonempty convex and closed for all $(x,t)\in \R^n \times \R_+$.
\item There exists a bounded set $K \subset \R^n$ such that $F(x,t)\subseteq K$ for all $(x,t)\in\R^n\times\R_+$.
\item $F$ is upper continuous with respect to the inclusion.
\end{itemize}
Then, for all $(x_0,t_0)\in \R^n \times \R_+$, there exists at least one solution to the problem:
	\[
	\dot{x}(t)\in F(x(t),t) \quad  \mbox{with} \quad x(t_0)=x_0.
	\]
\end{theorem}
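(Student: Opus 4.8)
This is a classical existence result for differential inclusions with an upper semicontinuous, convex- and closed-valued right-hand side, and the plan is to follow the standard three-step scheme: construct approximate (Euler polygonal) solutions, extract a uniformly convergent subsequence by compactness, and pass to the limit exploiting the convexity and upper semicontinuity of $F$. Since $F(x,t)\subseteq K$ with $K$ bounded, the key a priori information is the uniform velocity bound $M:=\sup_{y\in K}|y|<\infty$.

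First I would fix $(x_0,t_0)$ and a horizon $T>t_0$. For each $n\in\N$ I partition $[t_0,T]$ into $n$ equal subintervals $[t_k,t_{k+1}]$ of length $h=(T-t_0)/n$, set $y_0:=x_0$, and recursively pick \emph{any} $v_k\in F(y_k,t_k)$ (nonempty by hypothesis) and let $y_{k+1}:=y_k+h\,v_k$; then $x_n$ is the piecewise-affine interpolant of the nodes $(t_k,y_k)$, so that $\dot x_n(t)=v_k\in F(x_n(t_k),t_k)$ on $(t_k,t_{k+1})$ and $x_n(t_0)=x_0$. Only finitely many selections are made, so no measurable-selection lemma is needed at this stage. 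Since $|\dot x_n|\le M$ a.e., the family $\{x_n\}$ is uniformly bounded and uniformly Lipschitz on $[t_0,T]$, so by Arzel\`a--Ascoli a subsequence (not relabelled) converges uniformly to an $M$-Lipschitz function $x$ with $x(t_0)=x_0$, and $\dot x_n\rightharpoonup\dot x$ weakly-$\ast$ in $L^\infty$.

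The core step is to verify $\dot x(t)\in F(x(t),t)$ for a.e.\ $t$. Fix a differentiability point $t$ of the Lipschitz function $x$ and $\varepsilon>0$. By upper semicontinuity of $F$ at $(x(t),t)$ relative to the open set $F(x(t),t)+B(0,\varepsilon)$ there is $\delta>0$ with $F(y,s)\subseteq F(x(t),t)+B(0,\varepsilon)$ whenever $|y-x(t)|<\delta$ and $|s-t|<\delta$. Uniform convergence of $x_n$ and the vanishing mesh give $\tau_0\in(0,\delta)$ and $N$ so that, for $n\ge N$ and a.e.\ $s\in(t,t+\tau_0)$, the left node $t_{k(s)}$ of the subinterval containing $s$ satisfies $|t_{k(s)}-t|<\delta$ and $|x_n(t_{k(s)})-x(t)|<\delta$, whence $\dot x_n(s)\in F(x(t),t)+B(0,\varepsilon)$. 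Averaging over $(t,t+\tau)$ and using that $F(x(t),t)+\overline{B(0,\varepsilon)}$ is closed and convex (sum of the compact convex set $F(x(t),t)$ with a closed ball),
\[
\frac{x_n(t+\tau)-x_n(t)}{\tau}=\frac1\tau\int_t^{t+\tau}\dot x_n(s)\,ds\ \in\ F(x(t),t)+\overline{B(0,\varepsilon)},\qquad 0<\tau<\tau_0,\ n\ge N.
\]
Letting $n\to\infty$ (pointwise convergence of $x_n$), then $\tau\to0^+$ (differentiability at $t$), then $\varepsilon\to0$ (closedness of $F(x(t),t)$) yields $\dot x(t)\in F(x(t),t)$. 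Thus $x$ is absolutely continuous, satisfies $x(t_0)=x_0$ and solves the inclusion a.e.\ on $[t_0,T]$; since $M$ is independent of $T$, concatenating solutions built on successive unit intervals (restarting from the terminal point of the previous one) produces a global solution on $[t_0,\infty)$.

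The hard part is precisely this last step: turning weak convergence of the polygonal velocities, together with mere upper semicontinuity of $F$, into the \emph{pointwise} inclusion $\dot x(t)\in F(x(t),t)$ a.e. — this is the ``convergence theorem'' for differential inclusions, and the convexity of $F(x,t)$ is essential there, since otherwise the averages of the $\dot x_n$ need not return to $F$. The construction and the compactness step are entirely routine once the uniform bound $K$ is available.
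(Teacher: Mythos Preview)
Your proof sketch is correct and follows the classical Euler-polygonal/Arzel\`a--Ascoli/convergence-theorem scheme for upper semicontinuous convex-valued differential inclusions. However, the paper does not prove this theorem at all: it is simply quoted from Filippov's book \cite{Filippov} as a black box and then applied in Proposition~\ref{prop_dis} to obtain solutions of the truncated inclusion system. So there is no ``paper's own proof'' to compare against; your argument is a legitimate reconstruction of the standard proof, and the only remark is that you have supplied more than the paper itself does.
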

Thanks to this result, we provide global existence of solutions in time to the differential inclusion system \eqref{sys_di} in the proposition below.
\begin{proposition}\label{prop_dis} For any initial data $\mmz^N(0)$, there exists at least one global solution to the differential inclusion system \eqref{sys_di}.
\end{proposition}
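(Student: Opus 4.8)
The plan is to verify the three hypotheses of Filippov's Theorem~\ref{thm_fil} for the set-valued map $\mmb^N$ --- after a harmless truncation that makes it globally bounded --- and then to use the alignment (dissipative) structure of the velocity dynamics to extend the resulting solution to all times. First I would check the value structure: for every $(x,v)\in\R^{2dN}$ each $\mathcal{A}(x_i-x_j,v_i)$ is, as recalled before the statement, one of $\{0\}$, $\{1\}$ or $[0,1]$, hence nonempty, compact and convex; therefore $\prod_{j\neq i}\mathcal{A}(x_i-x_j,v_i)$ is nonempty, compact and convex, and $\mathcal{F}_i(x,v)$, being its image under the affine map $(\alpha_{ij})_{j}\mapsto\sum_{j\neq i}\alpha_{ij}m_j(v_j-v_i)$, is again nonempty, compact and convex. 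The product $\{v_1\}\times\cdots\times\{v_N\}\times\mathcal{F}_1(x,v)\times\cdots\times\mathcal{F}_N(x,v)=\mmb^N(x,v)$ then inherits these properties.

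Next I would prove that $\mmb^N$ is upper continuous with respect to inclusion. The crucial point is that $(x,v)\mapsto\mathcal{A}(x,v)$ is precisely the Filippov set-valued regularization of the bounded measurable map $(x',v')\mapsto\mb_{K(v')}(x')$, namely the convex hull of the set of all limits of $\mb_{K(v^n)}(x^n)$ along sequences $(x^n,v^n)\to(x,v)$; such a regularization is automatically upper semicontinuous with closed graph. Concretely, if $(x_k,v_k)\to(x,v)$, $\alpha_k\in\mathcal{A}(x_k,v_k)$ and $\alpha_k\to\alpha$, a diagonal extraction over the sequences realizing the $\alpha_k$ produces a sequence $(x'_n,v'_n)\to(x,v)$ along which $\mb_{K(v'_n)}(x'_n)$ converges to a value in $\{0,1\}$ that, by convexity, witnesses $\alpha\in\mathcal{A}(x,v)$. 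Since $v_i$ and $v_j-v_i$ depend continuously on $(x,v)$, this upper semicontinuity is preserved through the affine maps defining $\mathcal{F}_i$ and through the finite Cartesian product, so $\mmb^N$ satisfies the third hypothesis of Theorem~\ref{thm_fil}.

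It remains to handle global boundedness and globality in time, where I would exploit the dissipative structure of the velocity equation $\dot V_i=\sum_{j\neq i}\alpha_{ij}m_j(V_j-V_i)$ with $\alpha_{ij}\in[0,1]$, $m_j>0$, $\sum_j m_j=1$. Along any solution, at an index $i$ realizing $\max_j|V_j(t)|$ one has $\tfrac{d}{dt}\tfrac12|V_i|^2=\sum_{j\neq i}\alpha_{ij}m_j(V_i\cdot V_j-|V_i|^2)\le 0$, since $V_i\cdot V_j\le|V_i||V_j|\le|V_i|^2$; by the usual argument for the derivative of a finite maximum of absolutely continuous functions, $t\mapsto\max_j|V_j(t)|$ is non-increasing and bounded by $R_0:=\max_j|V_j^0|$, which forces $|X_i(t)|\le|X_i^0|+R_0 t$. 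To meet the global boundedness requirement of Theorem~\ref{thm_fil}, I would replace $v_i$ by $\phi(v_i)$ everywhere in $\mmb^N$, where $\phi$ is the radial truncation at level $2R_0$ (the identity on $\overline{B(0,2R_0)}$); the modified map stays upper continuous with nonempty compact convex values and is now bounded by a constant depending only on $R_0$ (note $|\phi(v)|\le|v|$ and, at the active index, $|\phi(V_j)|\le|\phi(V_i)|$, so the same computation gives $\tfrac{d}{dt}\tfrac12|V_i|^2\le 0$). Theorem~\ref{thm_fil} then yields a solution on $[0,\infty)$ of the truncated inclusion along which $\max_j|V_j(t)|\le R_0<2R_0$ for all $t\ge 0$; hence $\phi$ acts as the identity along it, and it is a global solution of~\eqref{sys_di}.

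I expect the \emph{upper continuity} step to be the main obstacle: one has to be careful that the convex-hull-of-cluster-values definition of $\mathcal{A}$ genuinely yields an upper continuous set-valued map and that this property survives the affine maps and the product producing $\mmb^N$. The value-structure check and the dissipative a priori estimate with the accompanying truncation are routine by comparison.
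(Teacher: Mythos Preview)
Your proposal is correct and follows essentially the same approach as the paper: verify that the set-valued map has nonempty closed convex values and is upper continuous, introduce a radial truncation of the velocities to obtain a globally bounded right-hand side so that Filippov's theorem applies, and then use the dissipativity $\tfrac{d}{dt}\max_i|V_i|^2\le 0$ to show that the truncated solution never activates the truncation and hence solves the original inclusion. The only cosmetic differences are that you truncate at $2R_0$ rather than $R_0$ and give a more detailed justification of upper continuity, which the paper simply asserts ``from the construction''; neither changes the substance of the argument.
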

\begin{proof}
It follows from the definition of $\mmb^N$ that $\mmb^N$ is bounded, closed, and convex for each $\mmz^N \in \R^{2dN}$.
We next easily get the upper continuity of $\mmb^N$ with respect to the inclusion from our construction of the differential inclusion system. For $R > 0$, we consider the truncated field $\mmb^N_R$ defined as
\[
\mmb^N_R:= \{v^R_1\} \times \cdots \times \{v^R_N\} \times \mathcal{F}^R_1(x,v) \times \cdots \times \mathcal{F}^R_N(x,v),
\]	
where 
\[
v_i^R:=\left\{ \begin{array}{ll}
\displaystyle \frac{R\wedge |v_i|}{|v_i|}v_i & \textrm{if $\,\, v_i \neq 0$}\\[2mm]
0 & \textrm{otherwise}
  \end{array} \right.
  \quad \mbox{and} \quad 
\mathcal{F}^R_i(x,v):=\bigg\{ \sum_{j \neq i} \alpha_{ij}m_j(v^R_j - v^R_i) : \alpha_{ij}\in \mathcal{A}(x_i - x_j,v_i) \bigg\}.
\]
Here $a \wedge b := \min\{a,b\}$ for $a,b \in \R_+$. Then, by Theorem \ref{thm_fil}, we have global solutions to the following differential inclusion system:
\bq\label{sys_di2}
\dot{\mmz}^N(t) \in \mmb^N_R(\mmz^N(t)) \quad \mbox{with the initial data} \quad \mmz^N_0.
\eq
Then we use the solution to the system \eqref{sys_di2} to show that the fields $\mmb^N$ and $\mmb^N_R$ generate the same solutions for $R>0$ large enough, i.e., $\mathcal{Z}^N(t)$ is also the solution to \eqref{sys_di}. Note that if we choose an index $i$ such that $|V_i^N(t)| = \max_{1 \leq j \leq N}|V_{j}^N(t)|$ for each $t$, then it follows from \eqref{sys_di} that for some $a_{ij}(t) \in \mathcal{A}(X_i^N(t) - X_j^N(t),V_i^N(t))$
\[
\frac{d}{dt}|V_i^N(t)|^2 = \sum_{j \neq i}a_{ij}(t)m_j(V_j^N(t) - V_i^N(t))\cdot V_i^N(t) \leq 0,
\]
due to the positivity of $a_{ij}$ and the choice of the index $i$. This yields
\[
\max_{1 \leq i \leq N}|V_i^N(t)| \leq \max_{1 \leq i \leq N}|V_i^N(0)| \quad \mbox{for a.e. } t\geq 0,
\]
Finally, by choosing $R= \max_{1\leq i \leq N}|V_i(0)|$, we conclude the existence of solutions to the system \eqref{sys_di}. 
\end{proof}

\subsection{MKR-distance} 

In order to establish quantitative estimates between solutions of the kinetic equation \eqref{sys_kin} and the differential inclusion \eqref{sys_di}, we need some basic tools of optimal transportation. 

\begin{definition}(Monge-Kantorovich-Rubinstein distance) \label{defdp}
Let $\rho_1,~ \rho_2$ be two Borel probability measures on $\bbr^d$. 
Then the Monge-Kantorovich-Rubinstein distance distance between $\rho_1$ and $\rho_2$ is defined as
\begin{equation*}\label{d1}
d_1(\rho_1,\rho_2) := \inf_{\gamma} \int_{\bbr^d \times
\bbr^d} |x-y| \, d\gamma(x,y) ,
\end{equation*}
where the infimum runs over all transference plans, i.e., all
probability measures $\gamma$ on $\bbr^d \times \bbr^d$ with
marginals $\rho_1$ and $\rho_2$ respectively,
\[
\int_{\bbr^d \times \bbr^d} \phi(x) d\gamma(x,y) = \int_{\bbr^d}
\phi(x) \,d\rho_1(x),
\]
and
\[
\int_{\bbr^d \times \bbr^d} \phi(y) d\gamma(x,y) = \int_{\bbr^d}
\phi(y) \,d\rho_2(y),
\]
for all $\phi \in \mathcal{C}_b(\bbr^d)$.
\end{definition}
Note that $\mathcal{P}_1(\R^d)$, the set of probability measures in $\bbr^d$ with first bounded moment, is a complete metric space endowed with the Monge-Kantorovich-Rubinstein distance. The Monge-Kantorovich-Rubinstein distance, also called 1-Wasserstein distance, is also equivalent to the Bounded Lipschitz distance 
\begin{equation}\label{blipd}
d_1(\rho_1,\rho_2) = \sup\left\{ \lt|\int_{\R^d} \varphi(\xi)d\rho_1(\xi) -  \int_{\R^d} \varphi(\xi)d\rho_2(\xi)\rt| \,\Big|\, \varphi \in \mbox{Lip}(\R^d), \mbox{Lip}(\varphi) \leq 1\right\},
\end{equation}
where Lip($\R^d$) and Lip($\varphi$) denote the set of Lipschitz functions on $\R^d$ and the Lipschitz constant of a function $\varphi$, respectively. We also remind the definition of the push-forward of a measure by a mapping in order to give the relation between Wasserstein distances and optimal transportation.

\begin{definition}
Let $\rho_1$ be a Borel measure on $\bbr^d$ and $\mathcal{T} :
\bbr^d \to \bbr^d$ be a measurable mapping. Then the push-forward
of $\rho_1$ by $\mathcal{T}$ is the measure $\rho_2$ defined by
\[
\rho_2(B) = \rho_1(\mathcal{T}^{-1}(B)) \quad \mbox{for} \quad B
\subset \bbr^d,
\]
and denoted as $\rho_2 = \mathcal{T} \# \rho_1$.
\end{definition}

We recall in the next proposition some classical properties, which proofs may be found in \cite{Vil}.

\begin{proposition}\label{prop-proper}
(i) The definition of $\rho_2 = \mathcal{T} \# \rho_1$ is equivalent
to
$$
\int_{\bbr^d} \phi(x)\, d\rho_2(x) = \int_{\bbr^d} \phi(\mathcal
T(x))\, d\rho_1(x)
$$
for all $\phi\in \mathcal{C}_b(\R^d)$. Given a probability measure with
first bounded moment $\rho_0$, consider two measurable mappings
$X_1,X_2 : \bbr^d \to \bbr^d$, then the following inequality
holds:
\[
d_1(X_1 \# \rho_0, X_2 \# \rho_0) \leq \int_{\bbr^d \times
\bbr^d} |x-y| d\gamma(x,y) = \int_{\bbr^d} | X_1(x) - X_2(x)|
d\rho_0(x).
\]
Here, we used as transference plan $\gamma = (X_1 \times
X_2)\#\rho_0$ in Definition \ref{defdp}. \newline

\noindent (ii) Given $\{\rho_k\}_{k=1}^{\infty}$ and $\rho$ in $\pp_1(\R^d)$, the following statements are equivalent:
\begin{itemize}
\item $d_1(\rho_k,\rho) \to 0$ as $k \to +\infty$.
\item $\rho_k$ converges to $\rho$ weakly-* as measures and
\[
\int_{\R^d}|\xi|d\rho_k(\xi) \to \int_{\R^d} |\xi| d\rho(\xi), \quad \mbox{as} \quad k \to +\infty.
\]
\end{itemize}
\end{proposition}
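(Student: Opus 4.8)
The plan is to follow the classical route of \cite{Vil}, handling the two parts separately and reducing everything to the change-of-variables formula for push-forwards and to the dual (Bounded Lipschitz) representation \eqref{blipd}. For part (i), I would first establish the equivalence of the two descriptions of the push-forward. The defining identity $\rho_2(B)=\rho_1(\mt^{-1}(B))$ is precisely $\int \mathbf{1}_B\,d\rho_2 = \int \mathbf{1}_B\circ\mt\,d\rho_1$; extending by linearity to simple functions and then approximating a bounded continuous (indeed bounded Borel) $\phi$ monotonically from below by simple functions yields the stated formula via monotone convergence. For the inequality, take $\gamma := (X_1\times X_2)\#\rho_0$ as a competitor in Definition \ref{defdp}: testing the push-forward formula against functions depending only on $x$ (resp.\ only on $y$) shows the two marginals of $\gamma$ are $X_1\#\rho_0$ and $X_2\#\rho_0$, so $\gamma$ is admissible and $d_1(X_1\#\rho_0,X_2\#\rho_0)\le\int|x-y|\,d\gamma$. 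Applying the push-forward formula once more to $(x,y)\mapsto|x-y|$ (unbounded, so handled by truncating at height $R$ and letting $R\to\infty$) identifies $\int|x-y|\,d\gamma$ with $\int|X_1(x)-X_2(x)|\,d\rho_0(x)$, which is the claim.

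For the easy implication in part (ii), assume $d_1(\rho_k,\rho)\to0$. Since the only transference plan between any $\mu\in\pp_1(\R^d)$ and the Dirac mass $\delta_0$ is $\mu\otimes\delta_0$, one has $\int_{\R^d}|\xi|\,d\mu(\xi)=d_1(\mu,\delta_0)$, so the triangle inequality gives $\big|\int|\xi|\,d\rho_k-\int|\xi|\,d\rho\big| = |d_1(\rho_k,\delta_0)-d_1(\rho,\delta_0)|\le d_1(\rho_k,\rho)\to0$, i.e.\ convergence of first moments. By \eqref{blipd}, $\int\varphi\,d\rho_k\to\int\varphi\,d\rho$ for every bounded Lipschitz $\varphi$; since $\sup_k\int|\xi|\,d\rho_k<\infty$, Markov's inequality shows $\{\rho_k\}$ is tight, and on the compact sets furnished by tightness an arbitrary $\varphi\in\mathcal{C}_b(\R^d)$ is a uniform limit of Lipschitz functions, upgrading the convergence to weak-$*$ convergence of measures.

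The reverse implication is the main obstacle. Assume $\rho_k\to\rho$ weakly-$*$ and $\int|\xi|\,d\rho_k\to\int|\xi|\,d\rho$. The crucial intermediate step is the uniform integrability of the modulus, $\lim_{R\to\infty}\sup_k\int_{\{|\xi|>R\}}|\xi|\,d\rho_k=0$: write $\int|\xi|\,d\rho_k=\int\min(|\xi|,R)\,d\rho_k+\int(|\xi|-R)^+\,d\rho_k$, use weak-$*$ convergence on the bounded continuous first term together with moment convergence to get $\int(|\xi|-R)^+\,d\rho_k\to\int(|\xi|-R)^+\,d\rho$ for each $R$, bound $\int_{\{|\xi|>2R\}}|\xi|\,d\rho_k\le 2\int(|\xi|-R)^+\,d\rho_k$, and let first $k\to\infty$, then $R\to\infty$, finally absorbing the finitely many exceptional indices. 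With this in hand I would bound $d_1(\rho_k,\rho)$ using \eqref{blipd}: for $\varphi$ with $\mathrm{Lip}(\varphi)\le1$ we may normalise $\varphi(0)=0$, so $|\varphi(\xi)|\le|\xi|$; splitting the integral at radius $R$, the exterior contribution is at most $\sup_k\int_{\{|\xi|>R\}}|\xi|\,d\rho_k+\int_{\{|\xi|>R\}}|\xi|\,d\rho$, small uniformly in $k$ and $\varphi$ for large $R$, while for the interior contribution the family $\{\varphi|_{\overline{B(0,R)}}:\mathrm{Lip}(\varphi)\le1,\ \varphi(0)=0\}$ is precompact in $\mathcal{C}(\overline{B(0,R)})$ by Arzel\`a--Ascoli, so it suffices to test the weak-$*$ convergence against a finite net of functions (multiplied by a fixed continuous cutoff supported near $\overline{B(0,R)}$). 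Letting $k\to\infty$ and then $R\to\infty$ yields $d_1(\rho_k,\rho)\to0$, completing the proof.
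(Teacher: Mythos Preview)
Your argument is correct and follows the classical route (change-of-variables for push-forwards, the dual formulation~\eqref{blipd}, and the uniform-integrability/Arzel\`a--Ascoli scheme for the hard implication in~(ii)). Note, however, that the paper does not supply its own proof of this proposition: it is stated as a recollection of standard facts with a reference to \cite{Vil}. So there is nothing to compare against beyond observing that your write-up is a faithful expansion of the textbook proof.
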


\subsection{Limiting Kinetic Equation: Well-posedness}\label{sec_2.5}
Let us define the notion of weak solution to the expected limiting kinetic equation \eqref{sys_kin}.

\begin{definition}\label{def_weak}For a given $T \in (0,\infty)$, f is a weak solution of the equation \eqref{sys_kin} on the time-interval $[0,T)$ with the sensitivity
region set-valued function $K(v)$ satisfying ${\bf (H1)}$-${\bf (H2)}$ if and only if the following conditions are satisfied:
\begin{enumerate}
\item $f \in L^\infty(0,T;(L_+^1 \cap L^\infty)(\R^d \times \R^d)) \cap L^\infty(0,T;\mathcal{P}_1(\R^d \times \R^d))$.
\item For all $\Psi \in \mc^\infty_c(\R^d \times \R^d \times [0,T])$, 
$$\begin{aligned}
&\int_{\R^d \times \R^d} f(x,v,T)\Psi(x,v,T)\,dxdv - \int_0^T \int_{\R^d \times \R^d} f(\pa_t \Psi + \nabla_x \Psi \cdot v + \nabla_v \Psi \cdot F(f))\,dxdvdt\cr
&\qquad = \int_{\R^d \times \R^d} f_0(x,v)\Psi_0(x,v)\,dxdv,
\end{aligned}$$
where $\Psi_0(x,v) := \Psi(x,v,0)$.
\end{enumerate}
\end{definition}

The main theorem regarding the well-posedness of the limiting kinetic equation is based on a weak-strong stability estimate in $d_1$. Similar estimates were recently obtained in \cite{Hauray} in the case of the Vlasov-Poisson system in one dimension.

\begin{theorem}\label{thm_weak}
Given an initial data satisfying 
\bq\label{main_ass}
f_0 \in (L^1_+ \cap L^\infty)(\R^d \times \R^d) \cap \mathcal{P}_1(\R^d \times \R^d)\,, 
\eq
and assume further that $f_0$ is compactly supported in velocity. Then there exists a positive time $T > 0$ such that the system \eqref{sys_kin}-\eqref{ini_sys_kin} with the sensitivity region set-valued function $K(v)$ satisfying ${\bf (H1)}$-${\bf (H2)}$ admits a unique weak solution $f$ in the sense of Definition {\rm \ref{def_weak}} on the time interval $[0,T]$, which is also compactly supported in velocity. Moreover, $f$ is determined as the push-forward of the initial density through the flow map generated by the local Lipschitz velocity field $(v, F(f))$ in phase space. Furthermore if $f_i,i=1,2$ are two such solutions to the system \eqref{sys_kin} with initial data $f_i(0)$ satisfying \eqref{main_ass}, we have
\bq\label{main_stab}
d_1(f_1(t),f_2(t)) \leq d_1(f_1(0),f_2(0))e^{Ct}, \quad \mbox{for} \quad t \in [0,T],
\eq
where $C$ is a positive constant that depends only on the $L^\infty(\R^d \times \R^d \times (0,T))$ norm of $f_1$.
\end{theorem}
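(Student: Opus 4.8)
The linchpin is the local Lipschitz estimate for the force operator $F(\cdot)$ on $L^1\cap L^\infty$ densities announced in the introduction: under ${\bf (H1)}$--${\bf (H2)}$, if $g\in(L^1_+\cap L^\infty)(\R^d\times\R^d)\cap\pp_1$ has velocity support in $B(0,R)$, then $(x,v)\mapsto F(g)(x,v)$ is Lipschitz on $\R^d\times B(0,R)$ with constant $\le C_R(1+\|g\|_{L^\infty})$, and moreover $|F(g)(x,v)|\le|v|+\int|w|\,g(y,w)\,dy\,dw$; the control of the thin layer $\partial^\e K(v)$ via ${\bf (H2)}$-$(ii)$ and Lemmas~\ref{lem_est1}--\ref{lem_est2} is exactly what makes this estimate work, and I take it as given. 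From it I first read off two a priori bounds for any weak solution $f$ that is the push-forward of $f_0$ along the flow of $(v,F(f))$. Differentiating $|V(t)|^2$ along a characteristic realizing (for a.e.\ $t$) the maximal speed over $\mathrm{supp}\,f(t)$ gives, as in the proof of Proposition~\ref{prop_dis}, $\tfrac{d}{dt}|V|^2=2\int\mb_{K(V)}(X-y)(w\cdot V-|V|^2)f(y,w)\,dy\,dw\le0$ since $|w|\le|V|$ on $\mathrm{supp}\,f(t)$; hence $\mathrm{supp}_v f(t)\subset B(0,R_0)$ with $R_0:=\|v\|_{L^\infty(\mathrm{supp}\,f_0)}$. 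Next $\|f(t)\|_{L^\infty}\le\|f_0\|_{L^\infty}\exp\!\big(\int_0^t\|\mathrm{div}_v F(f)(s)\|_{L^\infty}\,ds\big)$ with $\|\mathrm{div}_v F(f)(s)\|_{L^\infty}\le d\,\mathrm{Lip}(F(f(s)))\le C_{R_0}(1+\|f(s)\|_{L^\infty})$, and a nonlinear Gr\"onwall inequality bounds $\|f(t)\|_{L^\infty}$ on a maximal interval $[0,T^*)$; this is precisely the mechanism producing the local time $T$ in the statement.

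For existence I fix $T<T^*$ and the closed convex set $\mathcal S\subset C([0,T];\pp_1(\R^d\times\R^d))$ of curves with $\|f(t)\|_{L^\infty}\le M(t)$ and $\mathrm{supp}_v f(t)\subset B(0,R_0)$, where $M(t):=\|f_0\|_{L^\infty}\exp(\int_0^t C_{R_0}(1+M(s))\,ds)$ solves the Gr\"onwall ODE above. For $f\in\mathcal S$ the field $(v,F(f))$ is globally Lipschitz on $\R^d\times B(0,R_0)$, hence generates a unique flow $\Phi^f_t$; put $\mathcal T(f)(t):=\Phi^f_t\#f_0$. The a priori bounds give $\mathcal T(\mathcal S)\subseteq\mathcal S$; the uniform $L^\infty$ and velocity-support constraints plus the uniform transport speed make $\mathcal S$ compact in $C([0,T];\pp_1)$ (tightness, equi-integrability, equicontinuity in time, using Proposition~\ref{prop-proper}); and $\mathcal T$ is continuous on $\mathcal S$ because, for $f_n\to f$ in $\mathcal S$, $F(f_n)(x,v)\to F(f)(x,v)$ pointwise --- the discontinuity set $\{y:x-y\in\partial K(v)\}$ being $f(t)$-null by ${\bf (H2)}$-$(ii)$ and $f(t)\in L^\infty$ --- so the flows and thus the push-forwards converge. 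Schauder's theorem gives a fixed point $f=\mathcal T(f)$, which by construction is the push-forward of $f_0$ along the flow of $(v,F(f))$ and, being the transport of $f_0$ by a locally Lipschitz field, is a weak solution in the sense of Definition~\ref{def_weak}. (Alternatively: mollify $\mb_{K(v)}$, solve the regularized system by classical Cucker--Smale theory, and pass to the limit using the stability estimate below.)

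For the weak--strong stability \eqref{main_stab}, let $f_1,f_2$ be two weak solutions as in the theorem, $\Phi^1_t,\Phi^2_t$ their flows, and $\gamma_0$ an optimal plan for $d_1(f_1(0),f_2(0))$. Then $(\Phi^1_t\times\Phi^2_t)\#\gamma_0$ is admissible for $f_1(t),f_2(t)$, so with $(X_i,V_i):=\Phi^i_t$ and $D(t):=\int(|X_1-X_2|+|V_1-V_2|)\,d\gamma_0$ we have $d_1(f_1(t),f_2(t))\le D(t)$ and $D(0)=d_1(f_1(0),f_2(0))$. I differentiate along the characteristics and split
$$F(f_1)(X_1,V_1)-F(f_2)(X_2,V_2)=[F(f_1)(X_1,V_1)-F(f_1)(X_2,V_2)]+[F(f_1)(X_2,V_2)-F(f_2)(X_2,V_2)].$$
The first bracket is $\le\mathrm{Lip}(F(f_1(t)))\,(|X_1-X_2|+|V_1-V_2|)$, contributing $C(\|f_1\|_{L^\infty},R_0)\,D(t)$. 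The second bracket equals $\int\mb_{K(V_2)}(X_2-y)(w-V_2)\,d(f_1(t)-f_2(t))(y,w)$; representing $f_1(t)-f_2(t)$ through $\gamma_0$ and the two flows and writing the resulting integrand in $ab-cd$ form, I split it into a Lipschitz piece $\le|W_1-W_2|$ (hence $\le D(t)$ after integration) and a jump piece bounded, by Lemma~\ref{lem_est2}, by $|W_2-V_2|\,\mb_{\partial^{2|Y_1-Y_2|}K(V_2)}(X_2-Y_1)$. Integrating the thin-layer indicator first against the bounded density and using $\partial^\e K(v)\subset\Theta(v)^{\e,+}$, $|\Theta(v)^{\e,+}|\le C\e$ (${\bf (H2)}$-$(ii)$) and the velocity-support bound turns this into $C_{R_0}\|f\|_{L^\infty}\int|Y_1-Y_2|\,d\gamma_0\le C_{R_0}\|f\|_{L^\infty}\,D(t)$, \emph{linear} in $D(t)$. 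Collecting terms, $\tfrac{d}{dt}D(t)\le C\,D(t)$ with $C$ governed by the sup norm (attributed to $\|f_1\|_{L^\infty(\R^d\times\R^d\times(0,T))}$ once one linearizes the force difference at the right characteristic) together with the structural constants and $R_0$; Gr\"onwall gives \eqref{main_stab}, and $f_1(0)=f_2(0)$ forces $D\equiv0$, i.e.\ uniqueness.

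The main obstacle is this jump term. A crude bound $\mb_{\partial^{2|Y_1-Y_2|}K(V_2)}\le1$ loses everything, and splitting on $\{|Y_1-Y_2|\le\delta\}$ versus its complement and optimizing $\delta$ only yields a $D(t)^{1/2}$ bound --- which would be fatal, since with equal initial data it would permit $D(t)>0$ for $t>0$. The resolution is to integrate the thin-boundary indicator against an $L^\infty$ bound on a density, using simultaneously the area control ${\bf (H2)}$-$(ii)$ and the compactness of the velocity support, so that the factor $|Y_1-Y_2|$ reappears to the first power and is absorbed into $D(t)$. The accompanying bookkeeping --- choosing at which characteristic to linearize $F(\cdot)$ so the final constant is controlled by $\|f_1\|_{L^\infty}$ alone, and tracking the Lipschitz constant of $F$ through ${\bf (H2)}$ --- plus the verification of the nonlinear Gr\"onwall argument for the sup norm, is where the genuine work sits; the remainder is routine Cauchy--Lipschitz and optimal-transport bookkeeping.
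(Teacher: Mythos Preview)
Your existence argument via a Schauder fixed point is a genuine alternative to the paper's route, which instead mollifies $\mb_{K(v)}$ in both $x$ and $v$, obtains uniform $L^\infty$ and velocity-support bounds for the regularized solutions $f^{\eta,\e}$, and then proves a $d_1$-Cauchy estimate $d_1(f^{\eta,\e},f^{\eta',\e'})\le C(\eta+\eta'+\e+\e')$ to produce the limit; the stability inequality and the Cauchy property are literally the same computation, so the paper gets both at once. Your scheme is conceptually cleaner but makes you verify compactness and continuity of the solution map separately; note that it is compactness of $\mathcal T(\mathcal S)$ (not of $\mathcal S$ itself) that matters, and this follows because every element of $\mathcal T(\mathcal S)$ is the push-forward of the fixed $f_0$ by a flow with uniformly bounded speed.

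There is, however, a real gap in your stability argument. Your two-bracket splitting of $F(f_1)(X_1,V_1)-F(f_2)(X_2,V_2)$ forces the thin-layer integration $\int \mb_{\partial^{2|Y_1-Y_2|}K(V_2)}(X_2-Y_1)\,\cdots$ to run over the outer variable $(X_2,V_2)$, whose marginal under $\gamma_0$ (pushed forward by $\Phi^2_t$) is $f_2(t)$, not $f_1(t)$; the constant you obtain therefore depends on $\|f_2\|_{L^\infty}$. ``Linearizing at the other characteristic'' does not fix this: it just moves the problem to the Lipschitz bracket, which would then require $\mathrm{Lip}(F(f_2))$ and hence again $\|f_2\|_{L^\infty}$. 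The paper avoids this by \emph{not} splitting into Lipschitz\,+\,force-difference. It takes an optimal \emph{map} $\mathcal T^0$ with $\mathcal T^0\#f_1=f_2$ (which exists since $f_1\in L^\infty$), rewrites \emph{both} force integrals against $f_1$, and compares the integrands term by term. Every thin-layer indicator is then of the form $\mb_{\Theta(v)^{\epsilon,+}}(x-y)$ with $(x,v)$ and $(y,w)$ both $f_1$-distributed; the term whose $\epsilon$ depends on the inner variable is integrated in the outer one and vice versa, so only $\|f_1\|_{L^\infty}$ ever enters. Your plan-based formulation can be salvaged in the same spirit---shift every indicator argument to $X_1-Y_1$ via $\mb_A(x)\le\mb_{A^{|x-z|,+}}(z)$ together with $({\bf H2})$-$(iii),(iv)$, then split the accumulated $\epsilon$ into an outer and an inner part---but the decomposition you wrote down does not deliver a constant depending on $\|f_1\|_{L^\infty}$ alone.
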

We will show the existence of weak solutions satisfying the quantitative stability estimate \eqref{main_stab} in Section 4.

\begin{remark}\label{rmk_new2} Theorem {\rm\ref{thm_weak}} can be also proved by taking into account slightly weaker assumptions on $K$. More precisely, 
we may replace $(\bf{H2})$-$(iv)$ by
\begin{itemize}
\item $ ({\bf H2})$-$(iv)':$
\[
\wpa K(v) \subset \Theta(w)^{C|v-w|,+} \quad \mbox{for} \;  v,w \in \R^d
\]
where $C >0$ is independent of $v$, and $w$.
\end{itemize}
Moreover, let us mention here that $(\bf{H2})$-$(i)$ and $(iii)$ are direct consequences of the condition
\[
 \lt( x + K(v) \rt) \Delta K(w) \subset \Theta(w)^{C|v-w|,+} \quad \mbox{for}
 \;   x,v,w \in \R^d\,,
\]
leading to an easier verifiable condition for the sensitivity regions.
\end{remark}

To give an idea of the strategy of the proof, we provide the following proposition that includes the key argument of the weak-strong stability in $d_1$ and for which the assumptions on the sensitivity regions $K(v)$ are tailor-made. 

Let us point out that the classical arguments for the mean-field limit \cite{dobru,BH,Neun,Spohn,Szni,Glose} and their variants \cite{HL,BCC,CCR1,CCR2} using well-posedness of the associated Vlasov-like equations for  measures initial data are doomed to failure for sharp sensitivity regions due to the lack of uniqueness of the associated differential inclusion system as seen in Section \ref{sec_23} above. 
The weak-strong stability in $d_1$ replaces the argument of continuity with respect to initial data both for the well-posednes of the kinetic equation and the mean-field limit as already remarked for the Vlasov-Poisson equation in one dimension in \cite{Hauray}.

\begin{proposition}\label{prop:Strat}
Let $f$ be a solution to \eqref{sys_kin} given by Theorem {\rm\ref{thm_weak}}. Then the force field $F(f)$ generated by $f$ is linearly growing at infinity and locally Lipschitz continuous in phase space uniformly on $[0,T]$. More precisely, there exists a constant $C>0$ which depends on $\|f\|_{L^1\cap L^{\infty}}$ and support of $f_0$ in velocity such that  
\[
F(f)(x,v,t)\leq C(1+|v|)\|f\|_{L^1} \quad \mbox{and} \quad|F(f)(x,v) - F(f)(\tilde x, \tilde v)| \leq C(1 + |v|)|(x,v) - (\tilde x, \tilde v)|,
\]
for all $x,v,\tilde{x},\tilde{v}\in \R^d$ and $t\in [0,T]$.
\end{proposition}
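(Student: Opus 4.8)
The plan is to read off both bounds directly from
\[
F(f)(x,v,t) = \int_{\R^d \times \R^d} \mb_{K(v)}(x-y)\,(w-v)\, f(y,w,t)\,dy\,dw,
\]
using only that, for every $t\in[0,T]$, $f(\cdot,\cdot,t)$ is a probability density, lies in a fixed ball of $L^\infty(\R^d\times\R^d)$, and is supported in velocity inside a fixed ball $B(0,R_T)$ — all part of the conclusion of Theorem \ref{thm_weak} and its flow-map representation. Introduce the spatial marginal $\rho_f(x,t):=\int_{\R^d} f(x,v,t)\,dv$, so that $\int_{\R^d}\rho_f(\cdot,t) = \|f\|_{L^1}$ and, crucially, $\|\rho_f(\cdot,t)\|_{L^\infty}\le |B(0,R_T)|\,\|f(\cdot,\cdot,t)\|_{L^\infty}$ is bounded uniformly on $[0,T]$. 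The linear growth bound is then immediate: on the support of $f$ one has $\mb_{K(v)}(x-y)\,|w-v|\le (R_T+|v|)\,\mb_{K(v)}(x-y)$, so that $|F(f)(x,v,t)|\le (R_T+|v|)\|f\|_{L^1}\le C(1+|v|)\|f\|_{L^1}$.

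For the Lipschitz estimate I would first reduce to small displacements: if $\eta:=|(x,v)-(\tilde x,\tilde v)|\ge \delta_0$ for a threshold $\delta_0\in(0,1)$ fixed below, the linear growth bound together with $|\tilde v|\le |v|+\eta$ already gives $|F(f)(x,v,t)-F(f)(\tilde x,\tilde v,t)|\le C(1+|v|)+C(1+|\tilde v|)\le C'(1+|v|)\,\eta$. So assume $\e_1:=|x-\tilde x|$ and $\e_2:=|v-\tilde v|$ are both at most $\delta_0$, and split
\[
F(f)(x,v,t)-F(f)(\tilde x,\tilde v,t) = (\tilde v-v)\!\int \mb_{K(v)}(x-y)\, f\,dy\,dw + \int \bigl[\mb_{K(v)}(x-y)-\mb_{K(\tilde v)}(\tilde x-y)\bigr](w-\tilde v)\, f\,dy\,dw.
\]
The first term is bounded by $\e_2\|f\|_{L^1}$. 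In the second, on the support of $f$ we have $|w-\tilde v|\le R_T+|v|+\delta_0\le C(1+|v|)$, so it remains to control $\int |\mb_{K(v)}(x-y)-\mb_{K(\tilde v)}(\tilde x-y)|\, f(y,w,t)\,dy\,dw$ by $C(\e_1+\e_2)$.

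Here the sensitivity-region assumptions enter. Bound the integrand by $|\mb_{K(v)}(x-y)-\mb_{K(v)}(\tilde x-y)| + \mb_{K(v)\Delta K(\tilde v)}(\tilde x-y)$. For the first piece, shifting the argument of $\mb_{K(v)}$ by $x-\tilde x$ and invoking Lemma \ref{lem_est2} gives $|\mb_{K(v)}(x-y)-\mb_{K(v)}(\tilde x-y)|\le \mb_{\pa^{2\e_1}K(v)}(x-y)$, and $\pa^{2\e_1}K(v)=\pa K(v)+\overline{B(0,2\e_1)}\subset \Theta(v)^{2\e_1,+}$ by ${\bf (H2)}$-$(i)$; for the second piece, ${\bf (H2)}$-$(iii)$ gives $K(v)\Delta K(\tilde v)\subset \Theta(v)^{C\e_2,+}$. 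Choosing $\delta_0$ so small that $2\delta_0<1$ and $C\delta_0<1$, ${\bf (H2)}$-$(ii)$ yields $|\Theta(v)^{2\e_1,+}|\le 2C\e_1$ and $|\Theta(v)^{C\e_2,+}|\le C'\e_2$. Finally, for any measurable $S\subset\R^d$ and $a\in\R^d$,
\[
\int_{\R^d\times\R^d}\mb_S(a-y)\, f(y,w,t)\,dy\,dw = \int_{a-S}\rho_f(y,t)\,dy \le \|\rho_f(\cdot,t)\|_{L^\infty}\,|S|,
\]
and applying this with $S=\pa^{2\e_1}K(v)$, $a=x$ and with $S=\Theta(v)^{C\e_2,+}$, $a=\tilde x$ gives $\int |\mb_{K(v)}(x-y)-\mb_{K(\tilde v)}(\tilde x-y)|\, f\,dy\,dw\le \|\rho_f\|_{L^\infty}(2C\e_1+C'\e_2)$. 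Collecting the pieces completes the estimate.

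The substance of the argument is encoded entirely in hypothesis ${\bf (H2)}$: the bounds $|\Theta(v)^{\e,+}|\lesssim\e$ and $K(v)\Delta K(w)\subset\Theta(v)^{C|v-w|,+}$ are exactly the quantitative \emph{thinness} and \emph{Lipschitz-in-$v$} properties of the sharp sensitivity region which make the discontinuous indicator, once averaged against an $L^\infty$ density, behave like a Lipschitz kernel. The one point that genuinely requires care is that the $L^\infty$ bound must be used through the spatial marginal $\rho_f$, and this is only licit because $f$ is compactly supported in velocity (having finite first moment would not suffice) — which is precisely why that hypothesis appears in Theorem \ref{thm_weak}; everything else is routine bookkeeping, namely the reduction to small displacements, the translation/deformation splitting, and the elementary change of variables in the last display.
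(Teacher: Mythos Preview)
Your proof is correct and follows essentially the same approach as the paper's: both use the linear growth bound via the compact velocity support, reduce to small displacements, and control the indicator differences through $({\bf H2})$-$(i)$,$(ii)$,$(iii)$ by integrating against the spatial marginal $\rho_f\in L^\infty$. The only cosmetic difference is the order of the splitting: the paper writes three terms $I_1,I_2,I_3$ (change of $K(v)$ to $K(\tilde v)$ at $x$, change of $x$ to $\tilde x$ with $K(\tilde v)$, change of $(w-v)$ to $(w-\tilde v)$), while you peel off the velocity-factor term first and then handle the two indicator changes together; the underlying estimates are identical.
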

\begin{proof}
First, it is clear to find 
$$\begin{aligned}
F(f)(x,v,t)&\leq \int_{\R^{2d}}|w-v|f(y,w,t)dydw \leq \int_{\R^{2d}}|w|f(y,w,t)dydw+|v|\int_{\R^{2d}}f(y,w,t)dydw\\
&\leq C(1+|v|)\|f\|_{L^1},
\end{aligned}$$	
where we used the compact support of $f$ in velocity. For the local Lipschitz continuity of the force with respect to $x$ and $v$, we estimate
$$\begin{aligned}
&F(f)(x,v) - F(f)(\tilde x, \tilde v)\cr
&= \int_{\R^d \times \R^d} \mb_{K(v)}(x-y)(w - v)f(y,w)\,dydw - \int_{\R^d \times \R^d} \mb_{K(\tilde v)}(\tilde x-y)(w - \tilde v)f(y,w)\,dydw\cr
&= \int_{\R^d \times \R^d} \lt( \mb_{K(v)}(x-y) - \mb_{K(\tilde v)}(x-y)\rt)(w-v)f(y,w)\,dydw\cr
&\quad + \int_{\R^d \times \R^d} \lt( \mb_{K(\tilde v)}(x-y) - \mb_{K( \tilde v)}(\tilde x-y)\rt)(w-v)f(y,w)\,dydw\cr
&\quad + \int_{\R^d \times \R^d} \mb_{K(\tilde v)}(\tilde x - y)(\tilde v - v)f(y,w)\,dydw\cr
&=: I_1 + I_2 + I_3.
\end{aligned}$$
By using that $f$ is compactly supported in velocity together with ${\bf (H2)}$-$(i)$, $(ii)$, and $(iii)$, we obtain that for $|x- \tilde x| \leq 1/2$ and $C|v-\tilde{v}|\leq 1$
\begin{align}\label{est_ii}
\begin{aligned}
I_1 &\leq C(1 + |v|)\int_{\R^d \times \R^d} \mb_{K(v) \Delta K(\tilde v)}(x-y)f(y,w)\,dydw \cr
&\leq C(1 + |v|)\int_{\R^d \times \R^d} \mb_{\Theta(v)^{C|v-\tilde{v}|,+}}(x-y)f(y,w)\,dydw \cr
&\leq C\|\rho\|_{L^\infty}(1 + |v|)|v- \tilde v|\leq C(1 + |v|)|v- \tilde v|,\cr
I_2 &\leq C(1 + |v|)\int_{\R^d \times \R^d} \mb_{\pa^{2|x- \tilde x|}K(\tilde v)}(x-y) f(y,w)\,dydw \cr
&\leq C\|\rho\|_{L^\infty}(1 + |v|)|\Theta(\tilde v)^{2|x - \tilde x|,+}|\leq C(1 + |v|)|x - \tilde x|,\cr
\end{aligned}
\end{align}
where $\rho := \int_{\R^d} f\,dv$. On the other hand, for $|x - \tilde x|\geq 1/2$ and $C|v-\tilde{v}|\geq$, we estimate $I_1$ and $I_2$ as 
$$\begin{aligned}
I_1 &\leq C(1+|v|)\|f\|_{L^1} \leq C(1+|v|)|v- \tilde v|,\cr
I_2 &\leq C(1+|v|)\|f\|_{L^1} \leq C(1+|v|)|x- \tilde x|.
\end{aligned}$$
Note that $I_3$ can be easily estimated by
\[
I_3 \leq C\|f\|_{L^1}|v - \tilde v|.
\]
Combining the above estimates, we have
\bq\label{est_lip}
|F(f)(x,v) - F(f)(\tilde x, \tilde v)| \leq C(1 + |v|)|(x,v) - (\tilde x, \tilde v)|.
\eq
This completes the proof.
\end{proof}
By Proposition \ref{prop:Strat}, we obtain that the flow $\mmz$ given in \eqref{traj1} is well-defined for 
weak solutions in the sense of Definition \ref{def_weak}. 
Indeed, we can define flows $\mmz(t;s,x,v) := (X(t;s,x,v),V(t;s,x,v)) : \R_+ \times \R_+ \times \R^d \times \R^d \to \R^d \times \R^d$ generated from \eqref{sys_kin} satisfying
\begin{equation}\label{traj1}
\left\{ \begin{array}{ll}
\displaystyle \frac{d}{dt} X(t;s,x,v) = V(t;s,x,v), &\\[4mm]
\displaystyle \frac{d}{dt} V(t;s,x,v) = F(f)(t,\mmz(t;s,x,v)), &
\\[4mm]
(X(s;s,x,v),V(s;s,x,v)) = (x,v), &
\end{array} \right.
\end{equation}
for all $s,t \in [0,T]$.
\subsection{Particles to Continuum: Mean-Field Limit}

The existence of a quantitative bound in $d_1$-distance where only the $L^\infty$-bound of one of the solutions is needed is a strong indication that the mean-field limit from
particles to continuum descriptions can be carried over. We define the empirical measure $\mu^N(t)$ associated to a solution to the differential inclusion system \eqref{sys_di} as
\bq\label{def_emp}
\mu^N(t) = \sum_{i=1}^N m_i \delta_{X^N_i(t),V^N_i(t)}.
\eq
The additional work to take care concerns the control of the error term in $d_1$ between weak solutions and empirical measures associated to differential inclusions. The main result of this paper can be summarized as:

\begin{theorem} \label{main}
Suppose that the sensitivity region set-valued function $K$ satisfies ${\bf (H1)}$ and ${\bf (H2)}$, and let $f$ be a weak solution to the Cauchy problem \eqref{sys_kin}-\eqref{ini_sys_kin} in the sense of Definition \ref{def_weak} up to time $T>0$ with initial data $f_0 \in (\mathcal{P}_1 \cap L^\infty)(\R^d \times \R^d)$ compactly supported in velocity. Then we have:
\[
d_1(f(t),\mu^N(t))\le e^{Ct} d_1(f(0),\mu^N(0)) \quad \mbox{for} \quad t \in [0,T],
\]
where $C$ is a positive constant that depends only on the $L^\infty(\R^d \times \R^d\times (0,T))$ norm of $f$.
\end{theorem}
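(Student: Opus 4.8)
The plan is to run a Gr\"onwall estimate on an explicit, time-dependent transference plan between $f(t)$ and $\mu^N(t)$, built by coupling each atom of $\mu^N$ with a piece of $f$ and flowing that piece along the kinetic characteristics. Write $Z_i^N(t):=(X_i^N(t),V_i^N(t))$ and let $\Psi_t:=\mmz(t;0,\cdot,\cdot)$ be the flow of the locally Lipschitz field $(v,F(f))$ from \eqref{traj1}; by Theorem \ref{thm_weak}, $f(t)=\Psi_t\#f_0$. Since $\mu^N(0)=\sum_i m_i\delta_{Z_i^N(0)}$ is atomic, every transference plan between $f_0$ and $\mu^N(0)$ is of the form $\gamma_0=\sum_{i=1}^N\rho_i\otimes\delta_{Z_i^N(0)}$ with $\rho_i\ge0$, $\rho_i(\R^{2d})=m_i$ and $\sum_i\rho_i=f_0$; fix an optimal such $\gamma_0$ and set $\gamma_t:=\sum_i(\Psi_t\#\rho_i)\otimes\delta_{Z_i^N(t)}$. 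Its first marginal is $\Psi_t\#f_0=f(t)$ and its second is $\mu^N(t)$, so
\[
d_1(f(t),\mu^N(t))\le Q(t):=\sum_{i=1}^N\int_{\R^{2d}}\big|\Psi_t(z)-Z_i^N(t)\big|\,d\rho_i(z),\qquad Q(0)=d_1(f_0,\mu^N(0)).
\]
It then suffices to establish $\tfrac{d}{dt}Q(t)\le C\,Q(t)$ for a.e. $t\in[0,T]$ and apply Gr\"onwall.

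Write $\Psi_t(z)=(X(t;z),V(t;z))$ and $a_i(z):=|\Psi_t(z)-Z_i^N(t)|$. Differentiating $Q$, the position part is $\le\sum_i\int|V(t;z)-V_i^N(t)|\,d\rho_i(z)\le Q(t)$. For the velocity part, expand, using $\sum_j\Psi_t\#\rho_j=f(t)$,
\[
F(f)(t,\Psi_t(z))=\sum_{j}\int\mb_{K(V(t;z))}\big(X(t;z)-X(t;z')\big)\big(V(t;z')-V(t;z)\big)\,d\rho_j(z'),
\]
and compare term by term with the $i$-th velocity block $\mathcal F_i^N(t)=\sum_{j\ne i}\alpha_{ij}m_j(V_j^N-V_i^N)$, $\alpha_{ij}\in\mathcal A(X_i^N-X_j^N,V_i^N)$, of $\mathcal F^N(t)$; along the evolution all velocities remain in a fixed ball of radius $R$ (by the linear growth of $F(f)$ in Proposition \ref{prop:Strat} for $f$, and by the a priori bound in the proof of Proposition \ref{prop_dis} for the particles). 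The diagonal $j=i$ contribution of $F(f)$ is $\le\int|V(t;z')-V(t;z)|\,d\rho_i(z')\le\int(a_i(z')+a_i(z))\,d\rho_i(z')$, which, integrated in $d\rho_i(z)$ and summed in $i$, is $\le 2Q(t)$. For $j\ne i$, abbreviating $\nu=V(t;z)$, $\nu'=V(t;z')$, $\xi-\xi'=X(t;z)-X(t;z')$, split
\[
\mb_{K(\nu)}(\xi-\xi')(\nu'-\nu)-\alpha_{ij}(V_j^N-V_i^N)=\mb_{K(\nu)}(\xi-\xi')\big[(\nu'-\nu)-(V_j^N-V_i^N)\big]+(V_j^N-V_i^N)\big[\mb_{K(\nu)}(\xi-\xi')-\alpha_{ij}\big];
\]
the first bracket has norm $\le a_i(z)+a_j(z')$, which again sums to $\le 2Q(t)$, and $|V_j^N-V_i^N|\le 2R$.

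The crux is the pointwise estimate
\[
\big|\mb_{K(\nu)}(\xi-\xi')-\alpha_{ij}\big|\le\mb_{\Theta(\nu)^{C(a_i(z)+a_j(z')),+}}(\xi-\xi'),
\]
valid since $|\nu-V_i^N|\le a_i(z)$ and $|(\xi-\xi')-(X_i^N-X_j^N)|\le a_i(z)+a_j(z')$. To prove it, assume $d(\xi-\xi',\Theta(\nu))>C(a_i(z)+a_j(z'))$ with $C$ large; from $\partial K(\nu)\subset\Theta(\nu)$ (${\bf (H2)}$-$(i)$), $K(\nu)\Delta K(V_i^N)\subset\Theta(\nu)^{C|\nu-V_i^N|,+}$ (${\bf (H2)}$-$(iii)$) and $\wpa K(V_i^N)\subset\Theta(V_i^N)\subset\Theta(\nu)^{C|\nu-V_i^N|,+}$ (Lemma \ref{rem:BsubThe} and ${\bf (H2)}$-$(iv)$), one checks that $\xi-\xi'$ sits strictly inside $\mathrm{int}\,K(\nu)$ or strictly outside $K(\nu)$, that $X_i^N-X_j^N$ lies outside $\wpa K(V_i^N)$ so $\mathcal A(X_i^N-X_j^N,V_i^N)=\{\mb_{K(V_i^N)}(X_i^N-X_j^N)\}$, and that this value is $1$, resp.\ $0$, accordingly; hence $\mb_{K(\nu)}(\xi-\xi')=\alpha_{ij}$. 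It remains to control $2R\sum_i\sum_{j\ne i}\int\!\!\int\mb_{\Theta(V(t;z))^{C(a_i(z)+a_j(z')),+}}(X(t;z)-X(t;z'))\,d\rho_j(z')\,d\rho_i(z)$. Using $\Theta(\nu)^{C(a_i+a_j),+}\subset\Theta(\nu)^{2Ca_i,+}\cup\Theta(\nu)^{2Ca_j,+}$: for the first set (independent of $z'$) sum in $j$ first, so that $\sum_j\int(\cdot)\,d\rho_j(z')=\int(\cdot)\,df(t)$ and the inner integral is $\le\int\mb_{\Theta(V(t;z))^{2Ca_i(z),+}}(X(t;z)-y)\,\rho(y,t)\,dy\le C\|f\|_{L^1\cap L^\infty}\,a_i(z)$ by ${\bf (H2)}$-$(ii)$ (with the trivial bound by the total mass when the enlargement exceeds $1$), then integrate $d\rho_i(z)$ and sum in $i$ to get $\le C\|f\|_{L^1\cap L^\infty}Q(t)$; for the second set sum in $i$ first, so the set no longer depends on $i$, use $\sum_i\int(\cdot)\,d\rho_i(z)=\int(\cdot)\,df(t)$ and ${\bf (H2)}$-$(ii)$ to bound by $C\|f\|_{L^1\cap L^\infty}\,a_j(z')$, then sum in $j$. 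Collecting all contributions gives $\tfrac{d}{dt}Q(t)\le C\,Q(t)$ with $C$ depending only on $\|f\|_{L^\infty}$ and the velocity support, and Gr\"onwall yields $d_1(f(t),\mu^N(t))\le Q(t)\le e^{Ct}Q(0)$.

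The decisive difficulty is the pointwise estimate above: it is where the velocity dependence of $K(\cdot)$ and all four parts of ${\bf (H2)}$ must be combined with the structure of the admissible-slope set $\mathcal A$ so that the discontinuity of $\mb_{K(v)}$ produces an error whose $\Theta$-enlargement thickness is \emph{linear} in the transport distance $a_i+a_j$, with no leftover additive term. Mollifying $\mb_{K(v)}$ spatially at scale $\e$ instead yields a term $\tfrac{C}{\e}Q(t)+C\e$ that cannot be closed for any choice of $\e$; the resolution is to never regularize in the spatial variable and exploit instead that both $K(\nu)\Delta K(V_i^N)$ and the uncertainty set $\wpa K(V_i^N)$ are already captured by enlargements of $\Theta$ of radius comparable to the quantity under control. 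A secondary, purely bookkeeping, point is to organize the double sum so that at each stage the enlarged set $\Theta(\cdot)^{\delta,+}$ is independent of the summation index, which is exactly what makes ${\bf (H2)}$-$(ii)$ applicable against $f\in(L^1\cap L^\infty)$.
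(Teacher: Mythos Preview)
Your argument is correct and follows the same strategy as the paper: bound a transport cost between $f(t)$ and $\mu^N(t)$ by a Gr\"onwall inequality, using ${\bf (H2)}$ to replace differences of indicators by $\mb_{\Theta(\nu)^{\delta,+}}$ with $\delta$ linear in the displacement, then split $\delta$ into an $(x,v)$-part and a $(y,w)$-part so that one always integrates the enlarged indicator against $f\in L^1\cap L^\infty$. The paper re-chooses an optimal map $\mathcal T^0$ at each reference time $t_0$ and differentiates $d_1$ from the right there, whereas you fix an optimal plan $\gamma_0=\sum_i\rho_i\otimes\delta_{Z_i^N(0)}$ at $t=0$, push it forward, and differentiate the resulting cost $Q(t)$; your bundling of the paper's $J_1^1,J_1^2,J_2$ into the single pointwise estimate $|\mb_{K(\nu)}(\xi-\xi')-\alpha_{ij}|\le\mb_{\Theta(\nu)^{C(a_i+a_j),+}}(\xi-\xi')$ is equivalent in content.

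One small point: your algebraic split $\mb_{K(\nu)}(\xi-\xi')[(\nu'-\nu)-(V_j^N-V_i^N)]+(V_j^N-V_i^N)[\mb_{K(\nu)}(\xi-\xi')-\alpha_{ij}]$ makes the bound on the second bracket use $|V_j^N-V_i^N|\le 2R$, so your constant $C$ depends on $\max_i|V_i^N(0)|$ and not only on $\|f\|_{L^\infty}$ as the theorem asserts. The paper instead writes $(\mb_{K(\nu)}-\alpha_{ij})(\nu'-\nu)+\alpha_{ij}[(\nu'-\nu)-(V_j^N-V_i^N)]$, so that the indicator difference is multiplied by $(\nu'-\nu)$, bounded by the velocity support of $f$, and the remaining factor $\alpha_{ij}\le 1$ sits in front of the transport-distance term; swapping to this split immediately gives the stated dependence of $C$.
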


It is worth emphazing that our result can not be obtained in other Wasserstein distances $d_p$, with $p>1$, see \cite{Vil} for its definition. We will explain this in details in Remark \ref{req:R2} based on technical considerations about its proof.

%%%%%%%%%%%%%%%%%%%%%%%%%%%%%%%%%%%%%%%%%%%%%%%%%%%%%%%%%%%%%%%%%%%%%%%%%%%%%%%%%%%%%%%%%%%%%%%%%%%%%%%%%%%%%%%%%%%%%%%%%%%%%%%%%%%%%%%%%%%%%%
%
%
%         Section: Mean-field limit
%
%
%%%%%%%%%%%%%%%%%%%%%%%%%%%%%%%%%%%%%%%%%%%%%%%%%%%%%%%%%%%%%%%%%%%%%%%%%%%%%%%%%%%%%%%%%%%%%%%%%%%%%%%%%%%%%%%%%%%%%%%%%%%%%%%%%%%%%%%%%%%%%%%%
\section{Mean-field limit: Proof}
In this section, we give the proof of the mean-field limit from the $N$-particle system \eqref{sys_di} to the kinetic equation \eqref{sys_kin} as stated in Theorem \ref{main}. For a fixed $t _0 \in [0,T]$, we choose an optimal transport map for $d_1$ denoted by $\mt^0 =(\mt^0_1,\mt^0_2)$ between $f(t_0)$ and $\mu^N(t_0)$, i.e., $\mu^N(t_0) = \mt^0 \# f(t_0)$. Then we can find that $f(t) = \mmz(t;t_0,\cdot,\cdot) \# f(t_0)$, and it follows from the definition of the solutions to the differential inclusion system $\mu^N(t) = \mmz^N(t;t_0,\cdot,\cdot) \# \mu^N(t_0)$ for $t \geq t_0$, where $\mmz^N$ is a solution to the system \eqref{sys_di} with $\mmz^N(t_0,t_0,x,v) = (x,v)$. Moreover, we obtain
\[
\mt^t \# f(t) = \mu^N(t) \quad \mbox{where} \quad \mt^t = \mmz^N(t;t_0,\cdot,\cdot) \circ \mt^0 \circ \mmz(t_0;t,\cdot,\cdot), 
\]
for $t \in[t_0, T]$.

We then use the Wasserstein $1$-distance to get
\[
d_1(f,\mu^N) \leq \int_{\R^d \times \R^d} |\mmz(t;t_0,x,v) - \mmz^N(t;t_0,\mt^0(x,v))|f(x,v,t_0)\,dxdv.
\]
Set 
\[
Q(t) := \int_{\R^d \times \R^d} |\mmz(t;t_0,x,v) - \mmz^N(t;t_0,\mt^0(x,v))|f(x,v,t_0)\,dxdv.
\]
By the definition of solutions to the differential inclusion system, we find that the solution $\mmz^N$ is differentiable at almost every $t$ with $\mmz^N \in \mmb^N(\mmz^N(t))$. Thus we can compute the derivative of $Q$ with respect to almost every time as follows.
\begin{align*}
\begin{aligned}
\frac{d}{dt}Q(t) \Big|_{t = t_0+} &\leq \int_{\R^d \times \R^d} |V(t;t_0,x,v) - V^N(t;t_0,\mt^0(x,v))|f(x,v,t_0)dxdv \Big|_{t = t_0+} \cr
&\quad  + \int_{\R^d \times \R^d} \left| F(f)(\mmz(t;t_0,x,v),t) -F^N(\mu^N)(\mmz^N(t;t_0,\mt^0(x,v)),t) \right|f(x,v,t_0)dx dv \bigg|_{t = t_0+} \cr
&=: I + J.
\end{aligned}
\end{align*}
$\diamond$ Estimate of $I$: We clearly obtain
\[
I = \int_{\R^d \times \R^d} |v - \mt^0_2(x,v)|f(x,v,t_0)\,dxdv \leq d_1(f(t_0),\mu^N(t_0)).
\]
$\diamond$ Estimate of $J$: We notice that
\begin{align*}
\begin{aligned}
J &= \int_{\R^d \times \R^d} \bigg| \int_{\R^d \times \R^d} \mb_{K(v)}(x - y)(w - v)f(y,w,t_0)dy dw \cr
&\quad -\int_{\R^d \times \R^d} \alpha(\mt^0(x,v), y)(w - \mt^0_2(x,v))d\mu^N(y,w,t_0)
\bigg|f(x,v,t_0)dx dv \cr
&= \int_{\R^d \times \R^d} \bigg| \int_{\R^d \times \R^d} \mb_{K(v)}(x - y)(w - v)f(y,w,t_0)dy dw \cr
&\quad -\int_{\R^d \times \R^d} \alpha(\mt^0(x,v), \mt^0_1(y,w))(\mt^0_2(y,w) - \mt^0_2(x,v))f
(y,w,t_0)dy dw \bigg|f(x,v,t_0)dx dv,
\end{aligned}
\end{align*}
where $\alpha (\mt^0(x,v), \mt^0_1(y,w)) \in \mathcal{A}(\mt^0_1(x,v) - \mt^0_1(y,w),\mt^0_2(x,v))$ and given by
\begin{displaymath}
\alpha(\mt^0(x,v),\mt^0_1(y,w)) = \left\{ \begin{array}{ll}
\mb_{K(\mt^0_2(x,v))}(\mt^0_1(x,v) - \mt^0_1(y,w)) & \textrm{for } \,\,\mt^0_1(x,v) - \mt^0_1(y,w) \notin \wpa K(\mt^0_2(x,v)),\\ [1mm]
\alpha\in [0,1] & \textrm{for } \,\,\mt^0_1(x,v) - \mt^0_1(y,w) \in \wpa K(\mt^0_2(x,v)).
  \end{array} \right.
\end{displaymath}
For notational simplicity, we omit the time dependency $t_0$ in the following computations for $J$. We divide it into three terms as follows:
\[
J = \int_{\R^d \times \R^d}\lt| \sum_{i = 1}^3 J_i \rt|f(x,v)\,dxdv,
\]
where
\begin{align*}
\begin{aligned}
J_1 &= \int_{\R^d \times \R^d} \lt( \mb_{K(v)}(x - y) -\mb_{K(\mt^0_2(x,v))}(\mt^0_1(x,v) - \mt^0_1(y,w)) \rt) (w-v)f(y,w)\,dydw,\cr
J_2 &= \int_{\R^d \times \R^d} (\mb_{K(\mt^0_2(x,v))}(\mt^0_1(x,v) - \mt^0_1(y,w))-\alpha(\mt^0(x,v), \mt^0_1(y,w)))\lt(w - v\rt)f(y,w)\,dydw, 
\end{aligned}
\end{align*}
and
$$
J_3 = \int_{\R^d \times \R^d} \alpha(\mt^0(x,v), \mt^0_1(y,w))\lt( w - v - (\mt^0_2(y,w) - \mt^0_2(x,v))\rt)\,f(y,w)\,dydw.
$$
	
$\diamond$ (Estimate of $J_1$): We decompose $J_1$ into two terms:
$$\begin{aligned}
J_1 &= \int_{\R^d \times \R^d} \lt( \mb_{K(v)}(x-y) - \mb_{K(v)}(\mt^0_1(x,v) - \mt^0_1(y,w))\rt)(w-v)f(y,w)\,dydw\cr
&\quad + \int_{\R^d \times \R^d} \lt( \mb_{K(v)}(\mt^0_1(x,v) - \mt^0_1(y,w)) - \mb_{K(\mt^0_2(x,v))}(\mt^0_1(x,v) - \mt^0_1(y,w))\rt)(w-v)f(y,w)\,dydw\cr
&=:J_1^1 + J_1^2.
\end{aligned}$$
Then using compact support in velocity and Lemma \ref{lem_est2} we find
$$|J_1^1|\leq \int \lt(\mb_{\partial^{2|x-\mt^0_1(x,v)|} K(v) }(x-y)+\mb_{\partial^{2|y-\mt^0_1(y,w)|} K(v) }(x-y)\rt)f(y,w)\,dydw .
$$	
Therefore, taking into account ${\bf(H2)}$-$(ii)$, we deduce that
$$\begin{aligned}
\int_{\R^{2d}}|J_1^1|f(x,v)dxdv \leq \, & \int\mb_{\partial^{2|x-\mt^0_1(x,v)|} K(v) }(x-y)f(y,w)\,f(x,v) \,dydw\,dxdv\\
&+\int\mb_{\partial^{2|y-\mt^0_1(y,w)|} K(v) }(x-y)f(x,v)\,f(y,w) \,dxdv\,dydw\\
\leq &\,C(1+\left\| f \right\|_{\infty} )\int |x-\mt^0_1(x,v)|f(x,v)\,dxdv.
\end{aligned}
$$

Now, we proceed with the term $J_1^2$ by using ${\bf(H2)}$-$(iii)$ and the fact that for any $K\subset\R^d$ and $x,y\in \R^d$ 
$$
\mb_{K}(x)\leq \mb_{K^{|x-y|,+}}(y)
$$
since $x\in K$ implies that $y\in K^{|x-y|,+}$, to find 
\begin{align*}
\lt| \mb_{K(v)}(\mt^0_1(x,v) - \mt^0_1(y,w)) - \right. &\left.\mb_{K(\mt^0_2(x,v))}(\mt^0_1(x,v) - \mt^0_1(y,w))\rt|\\
&\leq \mb_{K(v) \Delta K(\mt^0_2(x,v))}  (\mt^0_1(x,v) - \mt^0_1(y,w))\\
&\leq \mb_{\Theta(v)^{|v-\mt^0_2(x,v)|,+}}  (\mt^0_1(x,v) - \mt^0_1(y,w))\\
&\leq \mb_{ \Theta(v)^{C|v-\mt^0_2(x,v)|+|x-\mt^0_1(x,v)|+|y-\mt^0_1(y,w)|,+}}(x-y)\\
&\leq  \mb_{ \Theta(v)^{2|x-\mt^0_1(x,v)|+2C|v-\mt_{2}^0(x,v)|,+} } (x-y)+\mb_{ \Theta(v)^{2|y-\mt^0_1(y,w)|,+}  } (x-y).
\end{align*}	
And thus using assumption $(\textbf{H2})$-$(ii)$ we conclude
$$
\int_{\R^{2d}}|J^2_1|f(x,v)\,dxdv \leq C(1+\left\|f \right\|_{\infty} )\int \lt(|x-\mt^0_1(x,v)|+|v-\mt^0_2(x,v)| \rt) f(x,v)\, dxdv.
$$
	
$\diamond$ Estimate of $J_2$:
Note that by the definition of $\alpha(\mt^0(x,v), \mt^0_1(y,w))$, we infer that
$$\mb_{K(\mt^0_2(x,v))}(\mt^0_1(x,v) - \mt^0_1(y,w))-\alpha(\mt^0(x,v), \mt^0_1(y,w))\neq 0,
$$
only if 
$$
\mt^0_1(x,v) - \mt^0_1(y,w)\in\widetilde{\partial}K(\mt_2^0(x,v))\subset \Theta(\mt_2^0(x,v)),
$$
due to Lemma~\ref{rem:BsubThe}. Therefore we deduce from $(\textbf{H2})$-$(iv)$, or equivalently by using $(\textbf{H2})$-$(iv)'$ in Remark~\ref{rmk_new2}, that
$$
x-y\in \Theta(v)^{C|v-\mt_2^0(x,v)|+|x-\mt_1^0(x,v)|+|y-\mt_2^0(x,v)|,+}.
$$
Hence 
\begin{align*}
|\mb_{K(\mt^0_2(x,v))}(\mt^0_1(x,v) - &\mt^0_1(y,w))-\alpha(\mt^0(x,v), \mt^0_1(y,w))|\\
&\leq \mb_{\Theta(v)^{C|v-\mt_2^0(x,v)|+|x-\mt_1^0(x,v)|+|y-\mt_1^0(y,w)|,+}}(x-y)\\
& \leq  
\mb_{\Theta(v)^{2C|v-\mt_2^0(x,v)|+2|x-\mt_1^0(x,v)|,+}}(x-y)+ \mb_{\Theta(v)^{2|y-\mt_1^0(y,w)|,+}}(x-y).
\end{align*}

Combining the above estimates and being $t_0$ arbitrary in $[0,T]$, we conclude
\[
\frac{d^+}{dt}d_1(f(t),\mu^N(t)) \leq Cd_1(f(t),\mu^N(t)),
\]
for all $t \in [0,T]$, where $C$ is positive constant independent of $N$. This completes the proof.

\begin{remark}\label{req:R2}
We are now able to explain the reason we are not able to consider other Wasserstein distances $d_p$, with $p>1$. To estimate the $d_p$ distance, we need to control
\[
Q_p(t) := \int_{\R^d \times \R^d} |\Xi(t;t_0,x,v)|^p f(x,v,t_0)\,dxdv \quad \mbox{with } \Xi(t;t_0,x,v)=\mmz(t;t_0,x,v) - \mmz^N(t;t_0,\mt^0(x,v)).
\]
To control this quantity by the classical use of Gronwall's inequality, we need to estimate 
$$\begin{aligned}
p\int_{\R^d \times \R^d} |V(t;t_0,x,v) - V^N(t;t_0,\mt^0(x,v))| \left| \Xi(t;t_0,x,v) \right|^{p-1}f(x,v)dx dv
\end{aligned}$$
and
$$\begin{aligned}
p\int_{\R^d \times \R^d}  &\left| F(f)(\mmz(t;t_0,x,v),t)-F^N(f)(\mmz^N(t;t_0,\mt^0(x,v)),t) \right| \left| \Xi(t;t_0,x,v) \right|^{p-1}f(x,v)dx dv.
\end{aligned}$$
Following the idea of the proof above, we would have to estimate (we drop all reference to $t_0$)
$$\begin{aligned}
\int_{\R^d \times \R^d} \!\!\!\!\!\! |\Xi(x,v)|^{p-1} \left| \int_{\R^d \times \R^d}\!\!\!\! \lt(\mb_{K(v)}(x-y) - \mb_{K(v)}(\mt_1^0(x,v)-\mt_1^0(y,w)) \rt)(w-v)\,f(y,w)\,dydw \right| f(x,v)dx dv.
\end{aligned}$$
It is not possible to use Fubini's theorem and perform an integration with respect to $x$ in the second characteristic of the set as before. Moreover, the use of any H\"older inequalities will not lead to the requested linear estimate at the end. Thus our arguments fail to estimate $d_p$ distances, $p>1$.
\end{remark}

%%%%%%%%%%%%%%%%%%%%%%%%%%%%%%%%%%%%%%%%%%%%%%%%%%%%%%%%%%%%%%%%%%%%%%%%%%%%%%%%%%%%%%%%%%%%%%%%%%%%%%%%%%%%%%%%%%%%%%%%%%%%%%%%%%%%%%%%%%%%%%
%
%
%         Section: Local-in-time existence and uniqueness  of weak solutions
%
%
%%%%%%%%%%%%%%%%%%%%%%%%%%%%%%%%%%%%%%%%%%%%%%%%%%%%%%%%%%%%%%%%%%%%%%%%%%%%%%%%%%%%%%%%%%%%%%%%%%%%%%%%%%%%%%%%%%%%%%%%%%%%%%%%%%%%%%%%%%%%%%

\section{Local-in-time existence and uniqueness  of weak solutions}
In this section, we study the local existence and uniqueness of weak solutions to the system \eqref{sys_kin}.

\subsection{Regularization}
In this part, we introduce a regularized system, and show the uniform boundedness of solutions to the regularized system in regularization parameters.

We consider a mollified indicator function with respect to phase space $\mb_{K}^{\eta,\e}$ as $\mb_K^{\eta,\e} := \mb_K *_{(x,v)} (\psi_\e,\phi_\eta)$, i.e.,
\[
\mb^{\eta,\e}_{K(v)}(x) = \int_{\R^d} \mb_{K(v-w)}(x-y)\phi_\eta(w)\psi_\e(y)\,dydw,
\]
where $\phi_\eta(w) := \frac{1}{\eta^d}\phi\left( \frac{w}{\eta}\right)$ with
\[
\phi(v) = \phi(-v) \geq 0, \quad \phi \in \mc^\infty_0(\R^d), \quad \mbox{supp } \phi \subset B(0,1), \quad \mbox{and} \quad \int_{\R^d} \phi(v)\,dv = 1.
\]
Here $B(0,r):= \{x \in \R^d : |x| < r\}$ for $r > 0$. Similarly, the mollifier function $\psi_\e(y)$ is also defined. Classical theory implies that there exists a unique global solution $f^{\eta,\e}$ compactly supported in velocity to the regularized system:
\begin{equation}\label{reg-k-CS-si}
\left\{ \begin{array}{ll}
\partial_t f^{\eta,\e} + v \cdot \nabla_x f^{\eta,\e}
 + \nabla_v \cdot \big[F^{\eta,\e}(f^{\eta,\e})f^{\eta,\e}\big] = 0, & \,\, (x, v) \in \R^d \times \R^d,~~t > 0,\\[2mm]
\displaystyle F^{\eta,\e}(f^{\eta,\e})(x,v,t) := \int_{\R^d \times \R^d} \mb_{K(v)}^{\eta,\e}(x-y)(w - v)f^{\eta,\e}(y,w,t)dydw, & \,\, (x, v) \in \R^d \times \R^d,~~t > 0,\\[4mm]
f^{\eta,\e}(x,v,0) =: f_0(x,v), & \,\, (x,v) \in \R^d \times \R^d,
\end{array} \right.
\end{equation}
given by the initial data pushed forward through the characteristic system, see for instance \cite{CCR2}.

Before we proceed, we first remark the uniform boundedness, with respect to the regularization parameter, of the velocity support of $f^{\eta,\e}$ using the method of characteristics. More precisely, we consider the forward characteristics $Z^{\eta,\e}(s) := \left( X^{\eta,\e}(s;0,x,v), V^{\eta,\e}(s;0,x,v)\right)$ satisfying the following ODE system:
\begin{align}\label{traj-vel-est}
\begin{aligned}
\frac{dX^{\eta,\e}(s)}{ds} &= V^{\eta,\e}(s),\cr
\frac{dV^{\eta,\e}(s)}{ds} &= \int_{\R^d \times \R^d} \mb_{K(V^{\eta,\e}(s))}^{\eta,\e}\left( X^{\eta,\e}(s) - y\right)(w - V^{\eta,\e}(s))f^{\eta,\e}(y,w,s) dy dw.
\end{aligned}
\end{align}
Set $\Omega^{\eta,\e}(t)$ and $R^{\eta,\e}_v(t)$ the $v$-projection of compact supp$f^{\eta,\e}(\cdot,t)$ and maximum value of $v$ in $\Omega^{\eta,\e}(t)$, respectively:
\begin{equation*}
\Omega^{\eta,\e}(t) := \overline{\{ v \in \R^d : \exists (x,v) \in \R^d \times \R^d \mbox{ such that } f^{\eta,\e}(x,v,t) \neq 0 \}}, \quad R^{\eta,\e}_v(t):= \max_{v \in \Omega^{\eta,\e}(t)}|v|.
\end{equation*}
Then we have the uniform boundedness of support of $f^{\eta,\e}$ in velocity as follows. 

\begin{lemma}\label{lem-gro-vel} Let $Z^{\eta,\e}(t)$ be the solution to the particle trajectory \eqref{traj-vel-est} issued from the compact supp$_{(x,v)}f_0$ at time $0$. Then we have
\[
R^{\eta,\e}_v(t) \leq R^{\eta,\e}_v(0) = R^0_v:= \max_{v\in \Omega(0)}|v|,
\]
i.e., given $\tilde{\Omega}_0 := B(0,R^0_v) \subset \R^d$, then $\Omega^{\eta,\e}(t) \subset \tilde{\Omega}_0$ for $t \geq 0$, $\epsilon,\eta >0$.
\end{lemma}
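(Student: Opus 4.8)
The plan is to run, at the level of characteristics, the same maximum-principle argument already used for the particle system in the proof of Proposition~\ref{prop_dis}: the point is that the regularized force $F^{\eta,\e}$ is a \emph{nonnegatively weighted average} of the relative velocities $w-V^{\eta,\e}$, so the modulus of the fastest characteristic cannot increase. First I would reduce the statement to monotonicity of a single scalar function: since $f^{\eta,\e}(\cdot,s)$ is the push-forward of $f_0$ by the (globally defined, $\mathcal C^1$ in $s$) flow $Z^{\eta,\e}(s;0,\cdot,\cdot)$ of \eqref{traj-vel-est}, one has
\[
R^{\eta,\e}_v(s)=\sup_{(x,v)\in\,\operatorname{supp}f_0}\bigl|V^{\eta,\e}(s;0,x,v)\bigr|=:R(s),
\]
so that every velocity $w$ occurring in $\operatorname{supp}f^{\eta,\e}(\cdot,\cdot,s)$ satisfies $|w|\le R(s)$; it then suffices to show that $R$ is non-increasing. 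To make the later differentiation legitimate I would first record a crude a priori bound: from $|\dot V^{\eta,\e}(s)|\le\|f_0\|_{L^1}\bigl(R(s)+|V^{\eta,\e}(s)|\bigr)\le 2\|f_0\|_{L^1}R(s)$ and Gronwall's lemma one gets $R(s)\le R^0_v e^{2\|f_0\|_{L^1}s}<\infty$ on any $[0,T']$, hence $s\mapsto V^{\eta,\e}(s;0,x,v)$ is Lipschitz uniformly in $(x,v)\in\operatorname{supp}f_0$, so $R$ is Lipschitz and thus differentiable a.e.

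The key computation is that along any characteristic, using the definition of $F^{\eta,\e}$ in \eqref{reg-k-CS-si},
\[
\tfrac12\frac{d}{ds}\bigl|V^{\eta,\e}(s)\bigr|^2=\int_{\R^d\times\R^d}\mb^{\eta,\e}_{K(V^{\eta,\e}(s))}\!\bigl(X^{\eta,\e}(s)-y\bigr)\bigl(w\cdot V^{\eta,\e}(s)-|V^{\eta,\e}(s)|^2\bigr)f^{\eta,\e}(y,w,s)\,dy\,dw,
\]
and since $\mb^{\eta,\e}_{K(v)}\ge0$, $f^{\eta,\e}\ge0$ and $w\cdot V^{\eta,\e}(s)\le|w|\,|V^{\eta,\e}(s)|\le R(s)\,|V^{\eta,\e}(s)|$, the right-hand side is bounded above by
\[
a^{\eta,\e}(s)\,|V^{\eta,\e}(s)|\bigl(R(s)-|V^{\eta,\e}(s)|\bigr),\qquad a^{\eta,\e}(s):=\int_{\R^d\times\R^d}\mb^{\eta,\e}_{K(V^{\eta,\e}(s))}\!\bigl(X^{\eta,\e}(s)-y\bigr)f^{\eta,\e}(y,w,s)\,dy\,dw\ge0,
\]
which vanishes whenever $|V^{\eta,\e}(s)|=R(s)$; equivalently $\frac{d}{ds}|V^{\eta,\e}(s)|\le a^{\eta,\e}(s)\bigl(R(s)-|V^{\eta,\e}(s)|\bigr)$ as long as $V^{\eta,\e}(s)\neq0$.

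I would then conclude by the envelope (Danskin) argument. At a.e.\ $s$, $R$ is differentiable; picking $(x_*,v_*)\in\operatorname{supp}f_0$ realizing the supremum at time $s$, the function $s'\mapsto R(s')-|V^{\eta,\e}(s';0,x_*,v_*)|$ is nonnegative and vanishes at $s$, hence has a minimum there, so $R'(s)=\frac{d}{ds}|V^{\eta,\e}(s;0,x_*,v_*)|$ (the case $R(s)=0$ being trivial, since then $R\ge0$ attains a minimum at $s$). By the previous display with $|V^{\eta,\e}(s;0,x_*,v_*)|=R(s)$ this yields $R'(s)\le0$. Hence $R$ is non-increasing on $[0,T']$, i.e.\ $R^{\eta,\e}_v(t)\le R^{\eta,\e}_v(0)=R^0_v$ for all $t\ge0$, and consequently $\Omega^{\eta,\e}(t)\subset B(0,R^0_v)=\tilde{\Omega}_0$, uniformly in $\e,\eta>0$.

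The only genuinely delicate point will be the a.e.\ differentiation of the maximum function $R$, which is exactly why the preliminary Gronwall step (yielding a uniform Lipschitz bound for the velocity characteristics on compact time intervals) is needed. Alternatively, one can bypass it entirely by a contradiction argument: if $M:=\max_{[0,T']}R>R^0_v$ were attained at some $s_*>0$ along a characteristic (it is attained, by continuity on the compact set $[0,T']\times\operatorname{supp}f_0$), then integrating the linear differential inequality $\frac{d}{ds}|V^{\eta,\e}(s)|\le a^{\eta,\e}(s)\bigl(M-|V^{\eta,\e}(s)|\bigr)$ (valid since $R(s)\le M$) gives $|V^{\eta,\e}(s)|\le M+(R^0_v-M)e^{-\int_0^s a^{\eta,\e}}<M$ for all $s\in[0,T']$, contradicting $|V^{\eta,\e}(s_*)|=M$. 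Everything else reduces to the positivity of the mollified kernel $\mb^{\eta,\e}_{K(v)}$ and the pointwise bound $|w|\le R(s)$ on the velocity support.
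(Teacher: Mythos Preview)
Your argument is correct: the maximum-principle computation on $|V^{\eta,\e}|^2$ along characteristics, exploiting that $F^{\eta,\e}$ is a nonnegatively weighted average of $w-V^{\eta,\e}$ with $|w|\le R(s)$ on the support of $f^{\eta,\e}(\cdot,\cdot,s)$, is exactly the right mechanism, and both the envelope and the contradiction variants close correctly. The paper itself gives no proof of this lemma, referring instead to \cite{CFRT} and \cite[Lemma~3.1]{CCH2}; the argument there is essentially the one you wrote, and is also the continuum analogue of the discrete computation the paper already carries out in the proof of Proposition~\ref{prop_dis}.
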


The proof was given in \cite{CFRT} and \cite[Lemma 3.1]{CCH2}.

\begin{remark}\label{rmk-1-mom} It follows from Lemma \ref{lem-gro-vel} that all velocity moments of $f^{\eta,\e}$ and all space moments of $f^{\eta,\e}$ are uniformly bounded in $[0,T]$ with respect to the regularization parameters $\e,\eta$. 
\end{remark}

\begin{proposition}\label{prop-lp} Let $f^{\eta,\e}$ be the solution to the system \eqref{reg-k-CS-si}. Then there exists a positive time $T>0$ such that the $L^1\cap L^\infty$-estimate of $f^{\eta,\e}$:
\[
\sup_{0\leq t \leq T}\|f^{\eta,\e}(\cdot,\cdot,t)\|_{L^1 \cap L^\infty} \leq C,
\]
holds, where $C$ is a positive constant independent of $\e$ and $\eta$.
\end{proposition}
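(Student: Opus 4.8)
The plan is to bound the two norms separately and, for the $L^\infty$ part, to reduce everything to one uniform estimate on $\nabla_v\mb^{\eta,\e}_{K(v)}$. The $L^1$ bound is free: \eqref{reg-k-CS-si} is in conservation form and preserves positivity, so $\|f^{\eta,\e}(\cdot,\cdot,t)\|_{L^1}=\|f_0\|_{L^1}$ for all $t$; combined with the uniform-in-$(\e,\eta)$ velocity support $\Omega^{\eta,\e}(t)\subset\tilde{\Omega}_0=B(0,R^0_v)$ from Lemma~\ref{lem-gro-vel}, this also yields $\|\rho^{\eta,\e}(t)\|_{L^\infty}\leq C\|f^{\eta,\e}(t)\|_{L^\infty}$ for $\rho^{\eta,\e}:=\int f^{\eta,\e}\,dv$, with $C$ depending only on $R^0_v$.

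For the $L^\infty$ bound I would use that $f^{\eta,\e}(t)$ is the push-forward of $f_0$ along the characteristic flow of the (smooth in $(x,v)$) field $(v,F^{\eta,\e}(f^{\eta,\e}))$; the Jacobian identity then gives
\[
\|f^{\eta,\e}(t)\|_{L^\infty}\leq\|f_0\|_{L^\infty}\exp\Bigl(\int_0^t\bigl\|\nabla_v\cdot F^{\eta,\e}(f^{\eta,\e})(s)\bigr\|_{L^\infty}\,ds\Bigr),
\]
where it suffices to take the $L^\infty$-norm over the (uniformly bounded) velocity support of $f^{\eta,\e}(s)$. Since $\nabla_v\cdot F^{\eta,\e}(f)(x,v)=\int_{\R^{2d}}\nabla_v\mb^{\eta,\e}_{K(v)}(x-y)\cdot(w-v)f(y,w)\,dy\,dw-d\int_{\R^{2d}}\mb^{\eta,\e}_{K(v)}(x-y)f(y,w)\,dy\,dw$, the second term is bounded by $d\|f\|_{L^1}$ because $0\leq\mb^{\eta,\e}_{K(v)}\leq1$, while in the first term I would use $|w-v|\leq2R^0_v$ on $\mathrm{supp}\,f$, integrate out $w$, and apply Young's inequality to bound it by $2R^0_v\|\rho(t)\|_{L^\infty}\,\|\nabla_v\mb^{\eta,\e}_{K(v)}\|_{L^1(\R^d)}$. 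Thus everything hinges on the uniform bound $\sup_{v}\|\nabla_v\mb^{\eta,\e}_{K(v)}\|_{L^1(\R^d)}\leq C$, independent of $\e,\eta$.

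This is the heart of the matter and is exactly what ${\bf (H2)}$-$(ii),(iii)$ provide. Writing $\mb^{\eta,\e}_{K(v)}(x)=\int_{\R^d}(\mb_{K(u)}*\psi_\e)(x)\,\phi_\eta(v-u)\,du$ and using $\int\nabla\phi_\eta=0$, one gets $\nabla_v\mb^{\eta,\e}_{K(v)}(x)=\int_{\R^d}\bigl[(\mb_{K(u)}-\mb_{K(v)})*\psi_\e\bigr](x)\,\nabla\phi_\eta(v-u)\,du$. Since $|\mb_{K(u)}-\mb_{K(v)}|=\mb_{K(u)\Delta K(v)}$ and ${\bf (H2)}$-$(iii)$ gives $K(u)\Delta K(v)\subset\Theta(v)^{C|u-v|,+}\subset\Theta(v)^{C\eta,+}$ whenever $|u-v|\leq\eta$, this is bounded by $\frac{\|\nabla\phi\|_{L^1}}{\eta}\,(\mb_{\Theta(v)^{C\eta,+}}*\psi_\e)(x)$; integrating in $x$ and invoking ${\bf (H2)}$-$(ii)$ gives $\|\nabla_v\mb^{\eta,\e}_{K(v)}\|_{L^1}\leq\frac{\|\nabla\phi\|_{L^1}}{\eta}\,|\Theta(v)^{C\eta,+}|\leq C\|\nabla\phi\|_{L^1}$, uniformly in $v$ and $\e,\eta$ (for $C\eta<1$, the remaining range being irrelevant as $\eta\to0$). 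This is the step where the discontinuity of $\mb_{K(v)}$ is actually tamed, and the only place where ${\bf (H2)}$ is really used; I expect it to be the main obstacle.

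Finally, combining the three pieces gives $\|\nabla_v\cdot F^{\eta,\e}(f^{\eta,\e})(t)\|_{L^\infty}\leq C_1+C_2\|f^{\eta,\e}(t)\|_{L^\infty}$ with $C_1,C_2$ depending only on $\|f_0\|_{L^1}$, $R^0_v$ and the ${\bf (H1)}$--${\bf (H2)}$ constants. Substituting into the exponential bound, $a(t):=\|f^{\eta,\e}(t)\|_{L^\infty}$ satisfies $a(t)\leq\|f_0\|_{L^\infty}e^{C_1t}\exp(C_2\int_0^t a)$, a Riccati-type implicit Gronwall inequality; setting $b(t)=\int_0^t a$ and integrating $b'e^{-C_2b}\leq\|f_0\|_{L^\infty}e^{C_1t}$ shows that $a$ remains bounded on $[0,T]$ for some $T>0$ depending only on $\|f_0\|_{L^1\cap L^\infty}$, $R^0_v$ and the ${\bf (H1)}$--${\bf (H2)}$ constants, but not on $\e,\eta$. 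Together with the conserved $L^1$ norm, this yields the claim.
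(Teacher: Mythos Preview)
Your proof is correct and follows the same strategy as the paper: conserve $L^1$, control $L^\infty$ along characteristics via $\|\nabla_v\cdot F^{\eta,\e}(f^{\eta,\e})\|_{L^\infty}$, use $({\bf H2})$-$(ii),(iii)$ to make the $\nabla_v\mb^{\eta,\e}_{K(v)}$ contribution uniformly bounded in $\e,\eta$, and close with a Riccati-type Gronwall. The only minor variation is that the paper handles the $v$-derivative by a difference-quotient computation directly inside the integral against $f$ (bounding $\int(\mb^{\eta,\e}_{K(v+he_i)}-\mb^{\eta,\e}_{K(v)})(w_i-v_i)f\,dy\,dw\le C\|f\|_{L^\infty}h$), whereas you first isolate $\sup_v\|\nabla_v\mb^{\eta,\e}_{K(v)}\|_{L^1}$ as a standalone quantity via the centering trick $\int\nabla\phi_\eta=0$; both arguments extract exactly the same content $|K(u)\Delta K(v)|\le C|u-v|$ from $({\bf H2})$.
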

\begin{proof}
This is essentially a consequence of the local Lipschitz character of the velocity field \eqref{est_ii} observed in Section 2. Due to conservation of mass, we only need to deduce 
the $L^\infty$-estimate of $f^{\eta,\e}$, we rewrite the system \eqref{reg-k-CS-si} in the non-conservative form:
\[
\pa_t f^{\eta,\e} + v \cdot \nabla_x f^{\eta,\e} + F^{\eta,\e}(f^{\eta,\e})\cdot\nabla_v f^{\eta,\e} = -f^{\eta,\e} \nabla_v \cdot (F^{\eta,\e}(f^{\eta,\e}))
\]
Note that
$$\begin{aligned}
\nabla_v \cdot (F^{\eta,\e}(f^{\eta,\e})) &= -d\int_{\R^d \times \R^d} \mb^{\eta,\e}_{K(v)}(x-y)f^{\eta,\e}(y,w)\,dydw \cr
&\quad + \int_{\R^d \times \R^d} \lt(\nabla_v \mb^{\eta,\e}_{K(v)}(x-y) \rt)\cdot(w-v)f^{\eta,\e}(y,w)\,dydw.
\end{aligned}$$
The first term is obviously bounded by the $L^1$ norm of $f_0$. In order to estimate the second term in the right hand side of the above equation, we find that for $i = 1,\cdots, d$ and $h > 0$
$$\begin{aligned}
&\int_{\R^d \times \R^d} \lt( \mb_{K(v + e_i h)}^{\eta,\e}(x-y) - \mb_{K(v)}^{\eta,\e}(x-y)\rt) (w_i-v_i)f^{\eta,\e}(y,w)\,dydw\cr
&\,\, = \int_{\R^{2d} \times \R^{2d}} \lt( \mb_{K(v + e_i h - v')}(x-y - x') - \mb_{K(v-v')}(x-y - x')\rt) (w_i-v_i)f^{\eta,\e}(y,w)\phi_\eta(v')\psi_\e(x')\,dydwdx'dv'\cr
&\,\, \leq C\|f^{\eta,\e}\|_{L^\infty}\int_{\R^d \times \R^d} \lt( \int_{\R^d} \lt|\mb_{K(v + e_i h - v')}(x-y - x') - \mb_{K(v-v')}(x-y - x')\rt| dy\rt)\phi_\eta(v')\psi_\e(x')\,dx'dv'\cr
&\,\, \leq C\|f^{\eta,\e}\|_{L^\infty}h,
\end{aligned}$$
where we used Lemma \ref{lem-gro-vel} and a similar argument to \eqref{est_ii}.
This yields that
\[
\|\nabla_v \cdot (F^{\eta,\e}(f^{\eta,\e}))\|_{L^\infty} \leq C\|f^{\eta,\e}\|_{L^1 \cap L^\infty}
\]
and
\[
\frac{d}{ds}f^{\eta,\e}(X^{\eta,\e}(s),V^{\eta,\e}(s),s) \leq C(1 + \|f^{\eta,\e}\|_{L^\infty})f^{\eta,\e}(X^{\eta,\e}(s),V^{\eta,\e}(s),s).
\]
We conclude that
\[
\frac{d}{dt}\|f^{\eta,\e}(\cdot,\cdot,t)\|_{L^1 \cap L^\infty} \leq \|f^{\eta,\e}(\cdot,\cdot,t)\|_{L^1 \cap L^\infty}^2
\]
Hence we conclude that there exists a finite time $T > 0$ such that
\[
\sup_{0 \leq t \leq T}\|f^{\eta,\e}(\cdot,\cdot,t)\|_{L^\infty} \leq C,
\]
where $C$ is a positive constant independent of $\e$ and $\eta$.
\end{proof}

In the following lemma, we provide elementary properties of the mollified indicator function and several useful estimates of the differences between mollified indicator functions.
$\mb_{K}^{\e}$ denotes the mollified indicator function with respect to $x$, i.e., $\mb_{K}^{\e} = \mb_{K} *_x \phi_{\e}$. Similarly, $\mb_{K}^{\eta}$ denotes the mollified indicator function with respect to $v$, i.e., $\mb_{K}^{\eta} = \mb_{K} *_v \phi_{\eta}$.

\begin{lemma} 
\

\mbox{(i)} For any $\e > 0$ and compact set $K \subset \R^d$, we have
\bq\label{lem_basic1}
\mb_{K}^\e(x) \leq \mb_{K^{\e,+}}(x).
\eq

(ii) For all $v \in \R^d$ and all $ \e > 0$, it holds
\bq\label{lem_diff1}
\int_{\R^d}\lt|\mb_{K(v)}^{\eta,\e}(x) - \mb_{K(v)}^{\eta}(x)\rt| dx \leq \sup_{v \in \R^d}|\pa^{2\e}K(v)|.
\eq

(iii) For all $v \in \R^d$ and $0 < \eta \leq 1$, it holds
\bq\label{lem_add}
\int_{\R^d}\lt|\mb_{K(v)}^{\eta,\e}(x) - \mb_{K(v)}^{\e}(x)\rt| dx \leq C\eta,
\eq
where $C$ is a positive constant independent of $\e$ and $\eta$.\newline

(iv) For all $\eta, \e > 0$ and $x_1,y_1,x_2,y_2,v \in \R^d$ we have
\bq\label{lem_diff2}
|\mb_{K(v)}^{\eta,\e}(y_1 - x_1) - \mb_{K(v)}^{\eta,\e}(y_2 - x_2)| \leq \mb^{\eta,\e}_{\pa^{2|x_1 - x_2|}K(v)}(y_1 - x_1) + \mb^{\eta,\e}_{\pa^{2|y_1 - y_2|}K(v)}(y_1 - x_1).
\eq
\end{lemma}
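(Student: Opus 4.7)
The overall strategy is to exchange the order of integration in
$\mb_{K(v)}^{\eta,\e}(x)=\int\phi_\eta(w)\psi_\e(y)\,\mb_{K(v-w)}(x-y)\,dy\,dw$
and reduce each inequality to a pointwise geometric bound on the non-mollified indicator, handled either by Lemma~\ref{lem_est2} or by hypothesis $(\mathbf{H2})$-$(iii)$. Part $(i)$ is essentially immediate: if $x\notin K^{\e,+}$ then for every $y$ with $|y|\le\e$ one has $x-y\notin K$, so the integrand $\mb_K(x-y)\psi_\e(y)$ vanishes on $\mathrm{supp}\,\psi_\e$; otherwise the right-hand side equals $1$ while the left is bounded by $\int\psi_\e=1$. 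For $(iv)$ I expand the difference under the joint $(w,y')$-integral and apply Lemma~\ref{lem_est2} pointwise to $K(v-w)$ with the shifted data $(x_i+y',y_i)$, so that $\e_1=|x_1-x_2|$ and $\e_2=|y_1-y_2|$; the two resulting indicator terms $\mb_{\pa^{2|x_1-x_2|}K(v-w)}(y_1-x_1-y')$ and $\mb_{\pa^{2|y_1-y_2|}K(v-w)}(y_1-x_1-y')$ reassemble, after integrating against $\phi_\eta(w)\psi_\e(y')$, into the two $(\eta,\e)$-mollifications appearing on the right-hand side.

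For $(ii)$ and $(iii)$ the plan is parallel: subtract the simpler convolution ($\mb_{K(v)}^{\eta}$ or $\mb_{K(v)}^{\e}$, respectively), bound the integrand pointwise, and use Fubini to integrate in $x$ first. In $(ii)$, Lemma~\ref{lem_est2} yields $|\mb_{K(v-w)}(x-y)-\mb_{K(v-w)}(x)|\le\mb_{\pa^{2|y|}K(v-w)}(x-y)$, whose $x$-integral is at most $|\pa^{2|y|}K(v-w)|\le\sup_v|\pa^{2\e}K(v)|$ since $|y|\le\e$ on $\mathrm{supp}\,\psi_\e$; closing with $\int\phi_\eta\,\psi_\e=1$ gives the claim. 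In $(iii)$ one uses instead $(\mathbf{H2})$-$(iii)$ to dominate $|\mb_{K(v-w)}(x-y)-\mb_{K(v)}(x-y)|$ by $\mb_{\Theta(v)^{C|w|,+}}(x-y)$, and then $(\mathbf{H2})$-$(ii)$ to obtain $|\Theta(v)^{C|w|,+}|\le C'|w|$; combined with the moment bound $\int|w|\phi_\eta(w)\,dw\le\eta$, this produces the $O(\eta)$ estimate.

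The one delicate point is in $(iii)$: since $(\mathbf{H2})$-$(ii)$ only controls $|\Theta(v)^{\e,+}|$ for $\e\in(0,1)$, the argument requires $C|w|\le 1$ throughout $\mathrm{supp}\,\phi_\eta$. For $\eta$ small enough this is automatic; for the full range $\eta\in(0,1]$ stated in the lemma the regime $C|w|>1$ is absorbed into a fixed constant using the global compactness $(\mathbf{H1})$ together with the uniform upper bound on $|\Theta(v)^{1,+}|$ coming from $(\mathbf{H2})$-$(ii)$ at $\e=1$. Apart from this bookkeeping, every step reduces to a direct application of Fubini's theorem and the elementary Lemmas~\ref{lem_est1}--\ref{lem_est2}.
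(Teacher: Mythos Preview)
Your proof is correct and follows essentially the same route as the paper: parts (i) and (iv) are handled identically, and for (ii)--(iii) you expand the mollification, bound the inner integrand pointwise via Lemma~\ref{lem_est2} (resp.\ $(\mathbf{H2})$-$(iii)$), and integrate in $x$ first by Fubini, exactly as in the paper. Your explicit handling of the range constraint $C|w|\le 1$ in part (iii) is in fact more careful than the paper's own argument, which simply absorbs this into the final constant without comment.
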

\begin{proof} 
\

\noindent (i) By the definition of the mollification, it is clear to get the desired inequality. 

\noindent (ii) A straightforward computation yields 
$$\begin{aligned}
|\mb_{K(v)}^{\eta,\e}(x) - \mb_{K(v)}^{\eta}(x)| &\leq \int_{\R^d \times \R^d} |\mb_{K(w)}(x-y) - \mb_{K(w)}(x)|\phi_\eta(v-w)\psi_\e(y)\,dydw\cr
& = \int_{\{y \in \R^d : |y| \leq \e\} \times \R^d} |\mb_{K(w)}(x-y) - \mb_{K(w)}(x)|\phi_\eta(v-w)\psi_\e(y)\,dydw\cr
&\leq \int_{\R^d \times \R^d} \mb_{\pa^{2\e}K(w)}(x)\phi_\eta(v-w)\psi_\e(y)\,dydw\cr
&=\mb_{\pa^{2\e}K(v)}^\eta(x).
\end{aligned}$$
This concludes
$$\begin{aligned}
\int_{\R^d} |\mb_{K(v)}^{\eta,\e}(x) - \mb_{K(v)}^{\eta}(x)|dx &\leq \int_{\R^d}\mb_{\pa^{2\e}K(v)}^\eta(x)\,dx = \int_{\R^d \times \R^d} |\pa^{2\e}K(w)|\phi_\eta(v-w)\,dwdx\cr
&\leq \sup_{v \in \R^d}  |\pa^{2\e}K(v)|.
\end{aligned}$$
(iii) In a similar fashion as above, we find
$$\begin{aligned}
|\mb_{K(v)}^{\eta,\e}(x) - \mb_{K(v)}^{\e}(x)| &\leq \int_{\R^d \times \R^d} |\mb_{K(v-w)}(x-y) - \mb_{K(v)}(x-y)|\phi_\eta(w)\psi_\e(y)\,dydw\cr
& = \int_{\R^d \times \{w \in \R^d : |w| \leq \eta\} } |\mb_{K(v-w)}(x-y) - \mb_{K(v)}(x-y)|\phi_\eta(v-w)\psi_\e(y)\,dydw.
\end{aligned}$$
This implies that
$$\begin{aligned}
&\int_{\R^d}\lt|\mb_{K(v)}^{\eta,\e}(x) - \mb_{K(v)}^{0,\e}(x)\rt| dx\cr
&\quad \leq \int_{\R^d \times \{w \in \R^d : |w| \leq \eta\} } \lt(\int_{\R^d}|\mb_{K(v-w)}(x-y) - \mb_{K(v)}(x-y)|dx\rt)\phi_\eta(v-w)\psi_\e(y)\,dydw\cr
&\quad \leq \int_{\R^d \times \{w \in \R^d : |w| \leq \eta\} } |K(v-w)\Delta K(v)|\phi_\eta(v-w)\psi_\e(y)\,dydw\cr
&\quad \leq C\eta,
\end{aligned}$$
where $C$ is a positive constant independent of $\e$ and $\eta$, due to ${\bf (H2)}$-$(ii)$ and $(iii)$. 

\noindent (iv) As a direct consequence of Lemma \ref{lem_est2}, we obtain
$$\begin{aligned}
&|\mb_{K(v)}^{\eta,\e}(y_1 - x_1) - \mb_{K(v)}^{\eta,\e}(y_2 - x_2)|\cr
&\quad \leq \int_{\R^d \times \R^d} \lt| \mb_{K(v-w)}(y_1 - x_1 - z) - \mb_{K(v-w)}(y_2 - x_2 - z)\rt|\psi_\e(z)\phi_\eta(w)\,dzdw\cr
&\quad \leq \int_{\R^d \times \R^d} \mb_{\pa^{2|x_1 - x_2|}K(v-w)}(y_1 - x_1 -z)\psi_\e(z)\phi_\eta(w)\,dzdw \cr
&\qquad + \int_{\R^d \times \R^d} \mb_{\pa^{2|y_1 - y_2|}K(v-w)}(y_1 - x_1 -z)\psi_\e(z)\phi_\eta(w)\,dzdw\cr
&\quad = \mb^{\eta,\e}_{\pa^{2|x_1 - x_2|}K(v)}(y_1 - x_1) + \mb^{\eta,\e}_{\pa^{2|y_1 - y_2|}K(v)}(y_1 - x_1),
\end{aligned}$$
leading to the desired estimate.
\end{proof}

We now show the growth estimate of $d_1(f^{\eta,\e}(t),f^{\eta',\e'}(t))$ leading to a weak-strong stability which is the core of this section.

\begin{proposition}\label{prop-grow} Let $f^{\eta,\e}$ and $f^{\eta',\e'}$ be solutions to the system \eqref{reg-k-CS-si}. Then we have
\[
d_1(f^{\eta,\e}(t),f^{\eta',\e'}(t)) \leq C\lt(\eta+\e+ \eta'+\e'\rt),
\]
for $\frac12 \geq \e,\eta, \e',\eta' > 0$ with $C$ independent of $\eta$, $\e$, $\eta'$, and $\e'$, only depending on the uniform $L^\infty$-norm of the solutions ensured by Propostion \ref{prop-lp} and $T$. Moreover, given two solutions $f_1^{\eta,\e}$ and $f_2^{\eta,\e}$ to the system \eqref{reg-k-CS-si} with two different initial data, then
\[
d_1(f^{\eta,\e}_1(t),f^{\eta,\e}_2(t)) \leq C\lt(d_1(f^{\eta,\e}_1(0),f^{\eta,\e}_2(0)) + \eta+\e\rt),
\]
for $\frac12\geq \e,\eta > 0$ with $C$ independent of $\eta$, $\e$, only depending on the uniform $L^\infty$-norm of {\bf one} of the solutions, $f^{\eta,\e}_1$ for instance, and $T$.
\end{proposition}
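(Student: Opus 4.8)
The plan is to use the flow-map representation of the regularized solutions and a Gr\"onwall argument in $d_1$, exactly paralleling the mean-field proof of Section 3 but keeping track of the extra error coming from the mollification. Since $f^{\eta,\e}$ and $f^{\eta',\e'}$ are both transported by their respective (smooth) characteristic flows $Z^{\eta,\e}$, $Z^{\eta',\e'}$ issued from the \emph{same} initial datum $f_0$, Proposition \ref{prop-proper}(i) gives
\[
d_1(f^{\eta,\e}(t),f^{\eta',\e'}(t)) \le \int_{\R^d\times\R^d} \bigl| Z^{\eta,\e}(t;0,x,v) - Z^{\eta',\e'}(t;0,x,v)\bigr|\, f_0(x,v)\,dxdv =: Q(t),
\]
and for the second assertion one instead starts from an optimal plan between $f_1^{\eta,\e}(0)$ and $f_2^{\eta,\e}(0)$ and composes with the two flows, picking up the initial term $d_1(f_1^{\eta,\e}(0),f_2^{\eta,\e}(0))$. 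Then I would differentiate $Q$: the position part contributes $\int |V^{\eta,\e}(t) - V^{\eta',\e'}(t)| f_0 \le Q(t)$, and the velocity part reduces to controlling
\[
\int \Bigl| F^{\eta,\e}(f^{\eta,\e})(Z^{\eta,\e}(t),t) - F^{\eta',\e'}(f^{\eta',\e'})(Z^{\eta',\e'}(t),t)\Bigr| f_0\,dxdv.
\]

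The core estimate splits this difference into (a) a \emph{consistency} part, replacing $\mb^{\eta,\e}_{K(v)}$ by $\mb_{K(v)}$ and $f^{\eta',\e'}$-arguments by $f^{\eta,\e}$-arguments, which is where the genuine Lipschitz-type bounds of Section 2 (assumptions $\mathbf{(H2)}$-$(i),(ii),(iii)$, used through the $\pa^\e K(v)$ and $\Theta(v)^{\cdot,+}$ controls, together with the uniform velocity support of Lemma \ref{lem-gro-vel} and the uniform $L^1\cap L^\infty$ bound of Proposition \ref{prop-lp}) are invoked to produce a term $\le C\,Q(t)$; and (b) a \emph{mollification error} part, $\le C(\eta+\e+\eta'+\e')$, bounded using the three new estimates \eqref{lem_basic1}, \eqref{lem_diff1}, \eqref{lem_add}, \eqref{lem_diff2} of the preceding lemma. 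Concretely, writing $F^{\eta,\e}(f^{\eta,\e})(z) - F^{\eta',\e'}(f^{\eta',\e'})(z')$ and inserting/subtracting intermediate terms, the differences $\mb^{\eta,\e}_{K(v)} - \mb^{\eta}_{K(v)}$ and $\mb^{\eta}_{K(v)} - \mb_{K(v)}$ are integrated against the $L^\infty$-bounded density and give the $\e$-- and $\eta$-- errors via \eqref{lem_diff1} and \eqref{lem_add}; the difference in the spatial/velocity arguments of $\mb^{\eta,\e}_{K(v)}$ is handled by \eqref{lem_diff2} followed by $\mathbf{(H2)}$-$(ii)$, exactly as the terms $J_1^1$, $J_1^2$ in Section 3 (but now with smooth kernels, so no differential-inclusion subtlety arises and there is no analogue of $J_2$); and the $(\tilde v - v)$-type term plays the role of $J_3$ and is trivially $\le C\,|V^{\eta,\e}(t)-V^{\eta',\e'}(t)|$, hence absorbed into $Q(t)$. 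Throughout, linear growth of $F$ is tamed by the uniform-in-parameters bound on the velocity support (Lemma \ref{lem-gro-vel}, Remark \ref{rmk-1-mom}), so all constants $C$ depend only on $T$ and the uniform $L^\infty$ norm.

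Assembling these, one gets $\frac{d^+}{dt} Q(t) \le C\,Q(t) + C(\eta+\e+\eta'+\e')$ with $Q(0)=0$, and Gr\"onwall's lemma yields $Q(t) \le C(\eta+\e+\eta'+\e')(e^{Ct}-1)/C$, i.e.\ the claimed bound on $[0,T]$; for the two-initial-data statement the same computation (now with only \emph{one} of the densities needing an $L^\infty$ bound, since in the Lipschitz estimates only the density being ``tested against'' enters) gives $\frac{d^+}{dt} Q(t) \le C\,Q(t) + C(\eta+\e)$ with $Q(0) = d_1(f_1^{\eta,\e}(0),f_2^{\eta,\e}(0))$, whence the result. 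I expect the main obstacle to be the bookkeeping in the consistency part (a): one must carefully commute the mollifiers past the indicator differences so that assumption $\mathbf{(H2)}$-$(ii)$ can be applied to $|\pa^{2\rho}K(v)|$-type quantities after using Fubini to integrate out the transported position variable, precisely as in the $d_1$ (as opposed to $d_p$, $p>1$) argument of Section 3 and Remark \ref{req:R2}; there is no new conceptual difficulty beyond that, since the smoothness of the regularized fields removes the need for Filippov/differential-inclusion considerations.
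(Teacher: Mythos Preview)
Your proposal is correct and follows essentially the paper's approach: the same Gr\"onwall argument on a transport cost, with the force difference split into a mollification-error piece (the paper's $J_1$, handled via \eqref{lem_diff1} and \eqref{lem_add}) and Lipschitz/consistency pieces (the paper's $J_2$, $J_3$, $J_4$, handled via \eqref{lem_diff2}, $\mathbf{(H2)}$, and the uniform $L^\infty$ and velocity-support bounds), exactly as you describe. The only cosmetic difference is that the paper re-selects the optimal transport map at each fixed $t_0$ and differentiates at $t_0^+$ to obtain a differential inequality for $d_1$ itself, whereas you fix the coupling at $t=0$ (the identity, since the initial data coincide) and run Gr\"onwall on the upper bound $Q(t)$; both routes yield the same estimate.
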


\begin{proof} Let us consider the flows $Z^{\eta,\e} := (X^{\eta,\e},V^{\eta,\e})$ and $Z^{\eta',\e'}:= (X^{\eta',\e'},V^{\eta',\e'})$ generated from the solutions to \eqref{reg-k-CS-si} satisfying the corresponding characteristic systems \eqref{traj-vel-est} well-defined for $s,t \in [0,T]$ due to their smoothness. We now choose an optimal transport map $\mt^0 = (\mt_1^0(x,v), \mt_2^0(x,v))$ between $f^{\eta,\e}(t_0)$ and $f^{\eta',\e'}(t_0)$ for fixed $t_0 \in [0,T)$, i.e., $f^{\eta',\e'}(t_0) = \mt^0 \# f^{\eta,\e}(t_0)$. It is classically known from \cite{Vil} that such an optimal transport map exists when $f^{\eta,\e}(t_0)$ is absolutely continuous with respect to the Lebesgue measure. The solutions of these approximated problems can be expressed by the characteristic method with initial data at $t=t_0$ as $f^{\eta,\e}(t) = Z^{\eta,\e}(t;t_0,\cdot,\cdot)\#f^{\eta,\e}(t_0)$ and $f^{\eta',\e'}(t) = Z^{\eta',\e'}(t;t_0,\cdot,\cdot)\#f^{\eta',\e'}(t_0)$.  We also notice that
\[
\mt^t \# f^{\eta,\e}(t) = f^{\eta',\e'}(t), \quad \mbox{where} \quad \mt^t = Z^{\eta',\e'}(t;t_0,\cdot,\cdot) \circ \mt^0 \circ Z^{\eta,\e}(t_0;t,\cdot,\cdot).
\]
By Definition \ref{defdp} of $d_1$-distance, we obtain
\[
d_1(f^{\eta,\e}(t),f^{\eta',\e'}(t)) \leq \int_{\R^d \times \R^d} |Z^{\eta,\e}(t;t_0,x,v) - Z^{\eta',\e'}(t;t_0,\mt^0(x,v))|f^{\eta,\e}(x,v,t_0) dx dv.
\]
Set
\[
Q_{\eta,\e,\eta',\e'}(t) := \int_{\R^d \times \R^d} |Z^{\eta,\e}(t;t_0,x,v) - Z^{\eta',\e'}(t;t_0,\mt^0(x,v))|f^{\eta,\e}(x,v,t_0) dx dv.
\]
Then straightforward computations yield
\begin{align*}
\begin{aligned}
\frac{d}{dt}& Q_{\eta,\e,\eta',\e'}(t) \Big|_{t = t_0+} \cr
\leq &\int_{\R^d \times \R^d} |V^{\eta,\e}(t;t_0,x,v) - V^{\eta',\e'}(t;t_0,\mt^0(x,v))|f^{\eta,\e}(x,v,t_0)dxdv \Big|_{t = t_0+} \cr
& + \!\!\int_{\R^d \times \R^d} \left| F^{\eta,\e}(f^{\eta,\e})(Z^{\eta,\e}(t;t_0,x,v),t) -F^{\eta',\e'}(f^{\eta',\e'})(Z^{\eta',\e'}(t;t_0,\mt^0(x,v)),t) \right|f^{\eta,\e}(x,v,t_0)dx dv \bigg|_{t = t_0+} \cr
=: &\, I+J.
\end{aligned}
\end{align*}
It is easy to estimate $I$ as
\begin{equation*}
I = \int_{\R^d \times \R^d} |v - \mt^0_2(x,v)|f^{\eta,\e}(x,v,t_0) dx dv \leq C d_1(f^{\eta,\e}(t_0),f^{\eta',\e'}(t_0)).
\end{equation*}
For the estimate of $J$, we notice that
\begin{align*}
\begin{aligned}
J &= \int_{\R^d \times \R^d} \bigg| \int_{\R^d \times \R^d} \mb_{K(v)}^{\eta,\e}(x - y)(w - v)f^{\eta,\e}(y,w,t_0)dy dw \cr
&\quad -\int_{\R^d \times \R^d} \mb_{K(\mt^0_2(x,v))}^{\eta,\e'}(\mt^0_1(x,v)- y)(w - \mt^0_2(x,v))f^{\eta',\e'}
(y,w,t_0)dy dw \bigg|f^{\eta,\e}(x,v,t_0)dx dv \cr
&= \int_{\R^d \times \R^d} \bigg| \int_{\R^d \times \R^d} \mb_{K(v)}^{\eta,\e}(x - y)(w - v)f^{\eta,\e}(y,w,t_0)dy dw \cr
&\quad -\int_{\R^d \times \R^d} \mb_{K(\mt^0_2(x,v))}^{\eta,\e'}(\mt^0_1(x,v)- \mt^0_1(y,w))(\mt^0_2(y,w) - \mt^0_2(x,v))f^{\eta,\e}
(y,w,t_0)dy dw \bigg|f^{\eta,\e}(x,v,t_0)dx dv.
\end{aligned}
\end{align*}
We omit the time dependency on $t_0$ in the rest of computations for notational simplicity. We now decompose $J$ into four parts: 
\[
J = \int_{\R^d \times \R^d} \lt| \sum_{i=1}^4J_i\rt| f^{\eta,\e}(x,v)\, dx dv,
\]
where 
\begin{align*}
\begin{aligned}
J_1 &:= \int_{\R^d \times \R^d} \lt( \mb_{K(v)}^{\eta,\e}(x-y) - \mb_{K(v)}^{\eta',\e'}(x-y)\rt)(w-v)f^{\eta,\e}(y,w)\,dydw,\cr
J_2 &:= \int_{\R^d \times \R^d} \lt( \mb_{K(v)}^{\eta',\e'}(x-y) - \mb_{K(v)}^{\eta',\e'}(\mt^0_1(x,v) - \mt^0_1(y,w))\rt) (w-v) f^{\eta,\e}(y,w)\,dydw,\cr
J_3 &:= \int_{\R^d \times \R^d} \lt( \mb_{K(v)}^{\eta',\e'}(\mt^0_1(x,v) - \mt^0_1(y,w)) -\mb_{K(\mt^0_2(x,v))}^{\eta',\e'}(\mt^0_1(x,v) - \mt^0_1(y,w)) \rt) (w-v)f^{\eta,\e}(y,w)\,dydw,\cr
J_4 &:= \int_{\R^d \times \R^d} \mb_{K(\mt^0_2(x,v))}^{\eta',\e'}(\mt^0_1(x,v) - \mt^0_1(y,w))(w - v - \lt(\mt^0_2(y,w) - \mt^0_2(x,v))\rt)f^{\eta,\e}(y,w)\,dydw.
\end{aligned}
\end{align*}
$\diamond$ (Estimate of $J_1$): We decompose $J_1$ into two terms:
$$\begin{aligned}
J_1 &= \int_{\R^d \times \R^d} \lt( \mb_{K(v)}^{\eta,\e}(x-y) - \mb_{K(v)}^{\eta,\e'}(x-y)\rt)(w-v)f^{\eta,\e}(y,w)\,dydw\cr
&\quad + \int_{\R^d \times \R^d} \lt( \mb_{K(v)}^{\eta,\e'}(x-y) - \mb_{K(v)}^{\eta',\e'}(x-y)\rt)(w-v)f^{\eta,\e}(y,w)\,dydw\cr
&=:J_1^1 + J_1^2.
\end{aligned}$$
For the estimate of $J_1^1$, we obtain that for $0 < \e \leq 1/2$
$$\begin{aligned}
&\int_{\R^{2d} \times \R^{2d}} \lt| \mb_{K(v)}^{\eta,\e}(x-y) - \mb_{K(v)}^{\eta}(x-y)\rt||w-v|f^{\eta,\e}(x,v)f^{\eta,\e}(y,w)\,dxdydvdw\cr
&\quad \leq C\int_{ \R^{2d} \times \tilde\om_0^2}\lt| \mb_{K(v)}^{\eta,\e}(x-y) - \mb_{K(v)}^{\eta}(x-y)\rt|f^{\eta,\e}(x,v)f^{\eta,\e}(y,w)\,dxdydvdw\cr
&\quad \leq C\|f^{\eta,\e}\|_{L^\infty}\int_{\R^d \times \R^d}\lt(\int_{\R^d \times \tilde\om_0}\lt| \mb_{K(v)}^{\eta,\e}(x-y) - \mb_{K(v)}^{\eta}(x-y)\rt|dydw\rt)f^{\eta,\e}(x,v)\,dxdv\cr
&\quad \leq C\|f^{\eta,\e}\|_{L^\infty}\sup_{v \in \R^d} |\pa^{2\e}K(v)|\int_{\R^d \times \R^d} f^{\eta,\e}(x,v)\,dxdv\cr
&\quad \leq C\|f^{\eta,\e}\|_{L^\infty}\e,
\end{aligned}$$
due to ${\bf (H2)}$ and \eqref{lem_diff1}.
Similarly we also get
\[
\int_{\R^{2d} \times \R^{2d}} \lt| \mb_{K(v)}^{\eta,\e'}(x-y) - \mb_{K(v)}^{\eta}(x-y)\rt||w-v|f^{\eta,\e}(x,v)f^{\eta,\e}(y,w)\,dxdydvdw \leq C\|f^{\eta,\e}\|_{L^\infty}\e'
\]
for $0 < \e' \leq 1/2$. This yields
\[
\int_{\R^d \times \R^d} |J_1| f^{\eta,\e}(x,v)\,dxdv \leq C\|f^{\eta,\e}\|_{L^\infty}(\e + \e').
\]
We next use \eqref{lem_add} together with the compact support in velocity similarly to \eqref{est_ii} to find
$$\begin{aligned}
&\int_{\R^{2d} \times \R^{2d}} \lt| \mb_{K(v)}^{\eta,\e'}(x-y) - \mb_{K(v)}^{\e'}(x-y)\rt||w-v|f^{\eta,\e}(x,v)f^{\eta,\e}(y,w)\,dxdydvdw\cr
&\quad \leq C\|f^{\eta,\e}\|_{L^\infty}\int_{\R^d \times \R^d} \lt( \int_{\R^d} \lt| \mb_{K(v)}^{\eta,\e'}(x-y) - \mb_{K(v)}^{\e'}(x-y)\rt| dy\rt)f^{\eta,\e}(x,v)\,dxdv\cr
&\quad \leq C\|f^{\eta,\e}\|_{L^\infty}\eta.
\end{aligned}$$
Hence, we have
\[
\int_{\R^d \times \R^d} |J_1^2| f^{\eta,\e}(x,v)\,dxdv \leq C\|f^{\eta,\e}\|_{L^\infty}(\eta + \eta'),
\]
and this concludes
\[
\int_{\R^d \times \R^d} |J_1| f^{\eta,\e}(x,v)\,dxdv \leq C\|f^{\eta,\e}\|_{L^\infty}(\eta + \eta' + \e + \e'),
\]
where $C$ is a positive constant independent of $\eta,\eta',\e$, and $\e'$.

$\diamond$ (Estimate of $J_2$): We first use \eqref{lem_diff2} to find
	$$|J_2| \leq \int_{\R^d \times \R^d} \lt( \mb_{\pa^{2|x-\mt^0_1(x,v)|}K(v)}^{\eta',\e'}(x-y) + \mb_{\pa^{2|y- \mt^0_1(y,w)|}K(v)}^{\eta',\e'}(x-y)\rt)|w-v|f^{\eta,\e}(y,w)\,dydw$$
Moreover, by using \eqref{lem_basic1} we obtain

$$\begin{aligned}
	|J_2| &\leq \int_{\R^d \times \R^d\times \R^d} \mb_{\pa^{2|x-\mt^0_1(x,v)|+\e'}K(v-v')}(x-y) |w-v|f^{\eta,\e}(y,w)\phi_{\eta'}(v')\,dydwdv'\cr
	&\quad + \int_{\R^d \times \R^d\times \R^d} \mb_{\pa^{2|y-\mt^0_1(y,w)|+\e'}K(v-v')}(x-y) |w-v|f^{\eta,\e}(y,w)\phi_{\eta'}(v')\,dydwdv'\cr
		&=:J_2^1 + J_2^2.
\end{aligned}$$
We now choose $0 < \e' \leq 1/2$ and consider a set $D := \{ (x,v) \in \R^{2d} : |x - \mt^0_1(x,v)| \leq 1/4\}$. Then we obtain
$$\begin{aligned}
&\int_{D} |J_2^1| f^{\eta,\e}(x,v)\,dxdv \cr
&\qquad \leq C\int_{D \times \R^d \times \tilde\om_0 \times \R^d} \mb_{\pa^{2|x- \mt^0_1(x,v)| + \e'}K(v-v')} (x-y)f^{\eta,\e}(x,v)f^{\eta,\e}(y,w)\phi_{\eta'}(v')\,dxdydvdwdv'\cr
&\qquad \leq C\|f^{\eta,\e}\|_{L^\infty} \int_{\R^d \times \R^d}\sup_{u\in\R^d}\lt(|\pa^{2|x- \mt^0_1(x,v)| + \e'}K(u)|\rt)f^{\eta,\e}(x,v)\,dxdv\cr
&\qquad \leq C\|f^{\eta,\e}\|_{L^\infty}\int_{\R^d \times \R^d} \lt( |x- \mt^0_1(x,v)| + \e'\rt)f^{\eta,\e}(x,v)\,dxdv\cr
&\qquad \leq C\|f^{\eta,\e}\|_{L^\infty}\lt(\e' + d_1(f^{\eta,\e},f^{\eta',\e'})\rt),
\end{aligned}$$
due to ${\bf (H2)}$-$(ii)$. On the other hand, for $(x,v) \in D^c$, we have
$$\begin{aligned}
\int_{D^c} |J_2^1| f^{\eta,\e}(x,v)\,dxdv &\leq C\int_{D^c \times \R^{2d}} f^{\eta,\e}(x,v)f^{\eta,\e}(y,w)\,dxdydvdw\cr
&\leq C\int_{\R^{2d} \times \R^{2d}} |x - \mt^0_1(x,v)|f^{\eta,\e}(x,v)f^{\eta,\e}(y,w)\,dxdydvdw\cr
&\leq Cd_1(f^{\eta,\e},f^{\eta',\e'}).
\end{aligned}$$
This yields that for $0 < \e' \leq 1/2$
\[
\int_{\R^d \times \R^d} |J_2^1| f^{\eta,\e}(x,v)\,dxdv \leq C\|f^{\eta,\e}\|_{L^\infty}\e' + C(1 + \|f^{\eta,\e}\|_{L^\infty})d_1(f^{\eta,\e},f^{\eta,\e'}).
\]
Similarly, for $J_2^2$, we find
$$\begin{aligned}
&\int_{\R^d \times \R^d} \left(\int_{D \times \R^d\times \R^d} \mb_{\pa^{2|y-\mt^0_1(y,w)|+\e'}K(v-v')}(x-y) |w-v|f^{\eta,\e}(y,w)\phi_{\eta'}(v')\,dydwdv' \right)  f^{\eta,\e}(x,v)\,dxdv \cr
&\qquad \leq C\int_{D \times \R^d \times \tilde\om_0\times \R^d} \mb_{\pa^{2|y- \mt^0_1(y,w)| + \e'}K(v-v')} (x-y)f^{\eta,\e}(x,v)f^{\eta,\e}(y,w)\phi_{\eta'}(v')\,dxdydvdwdv'\cr
&\qquad \leq C\|f^{\eta,\e}\|_{L^\infty} \int_{\R^d \times \R^d}\sup_{u\in\R^d}\lt(|\pa^{2|y- \mt^0_1(y,w)| + \e'}K(u)|\rt)f^{\eta,\e}(y,w)\,dydw\cr
&\qquad \leq C\|f^{\eta,\e}\|_{L^\infty}\int_{\R^d \times \R^d} \lt( |y- \mt^0_1(y,w)| + \e'\rt)f^{\eta,\e}(y,w)\,dydw\cr
&\qquad \leq C\|f^{\eta,\e}\|_{L^\infty}\lt(\e' + d_1(f^{\eta,\e},f^{\eta',\e'})\rt),
\end{aligned}$$
and this and together with the estimate of $J_2^1$ implies
$$\int_{\R^d\times\R^d} |J_2|f^{\eta,\e}(x,v)\,dxdv\leq C\|f^{\eta,\e}\|_{L^\infty}\e' + C(1 + \|f^{\eta,\e}\|_{L^\infty})d_1(f^{\eta,\e},f^{\eta,\e'})$$

$\diamond$ (Estimate of $J_3$): Note that $J_3$ can be rewritten as
	\[
	\begin{aligned}
	J_3= &\int_{\R^d \times \R^d\times\R^{2d}}  \mb_{K(v-v')}(\mt^0_1(x,v) - \mt^0_1(y,w)-z)  (w-v)f^{\eta,\e}(y,w)\,\phi_{\e'}(z)\phi_{\eta'}(v')\,dydw\,dzdv'\cr
	&-\int_{\R^d \times \R^d\times\R^{2d}}  \mb_{K(\mt^0_2(x,v)-v')}(\mt^0_1(x,v) - \mt^0_1(y,w)-z)  (w-v)f^{\eta,\e}(y,w)\,\phi_{\e'}(z)\phi_{\eta'}(v')\,dydw\,dzdv'.
	\end{aligned}
	\]
	Then using assumption $(\textbf{H2})$-$(iii)$, \eqref{lem_basic1} and \eqref{lem_est1} we find
	$$\begin{aligned}
	|J_3|&\leq \int_{\R^d \times \R^d\times\R^{2d}}  \mb_{K(v-v')\Delta K(\mt^0_2(x,v)-v')}(\mt^0_1(x,v) - \mt^0_1(y,w)-z)  f^{\eta,\e}(y,w)\,\phi_{\e'}(z)\phi_{\eta'}(v')\,dydw\,dzdv'\cr
	&\leq C\int_{\R^d \times \R^d\times\R^{2d}}  \mb_{\Theta(v-v')^{C|v-\mt^0_2(x,v)|,+}}(\mt^0_1(x,v) - \mt^0_1(y,w)-z)  f^{\eta,\e}(y,w)\,\phi_{\e'}(z)\phi_{\eta'}(v')\,dydw\,dzdv'\cr
	&\leq C\int_{\R^d \times \R^d\times\R^{d}}  \mb_{\Theta(v-v')^{C|v-\mt^0_2(x,v)|+\e',+}}(\mt^0_1(x,v) - \mt^0_1(y,w))  f^{\eta,\e}(y,w)\,\phi_{\eta'}(v')\,dydw\,dv'.\cr
	\end{aligned}$$
	It follows from Lemmas \ref{lem_est1} and \ref{lem_est2} that for any compact set $K\subset \R^d$, $x,y\in\R^d$, we have
	\[
	\mb_{K}(x)\leq \mb_{K^{|x-y|,+}}(y).
	\]
Thus we deduce 
$$\begin{aligned}
|J_3|&\leq C\int_{\R^d \times \R^d\times\R^{d}}  \mb_{\Theta(v-v')^{C|v-\mt^0_2(x,v)|+|x-\mt^0_1(x,v)|+|y-\mt^0_1(y,w)|+\e',+}}(x -y)  f^{\eta,\e}(y,w)\,\phi_{\eta'}(v')\,dydwdv'\\
&\leq  C\int_{\R^d \times \R^d\times\R^{d}}  \mb_{\Theta(v-v')^{2C|v-\mt^0_2(x,v)|+2|x-\mt^0_1(x,v)|}}(x -y) f^{\eta,\e}(y,w)\,\phi_{\eta'}(v')\,dydwdv \\
&\quad +  C\int_{\R^d \times \R^d\times\R^{d}}  \mb_{\Theta(v-v')^{2|y-\mt^0_1(y,w)|+2\e',+}}(x -y)  f^{\eta,\e}(y,w)\,\phi_{\eta'}(v')\,dydwdv \\
&:=J_3^1+J_3^2.
\end{aligned}$$
Introducing a set $F:=\{(x,v)\in\R^{2d} \ , \ 2|x-\mt_1^0(x,v)|+2C|v-\mt_2^0(x,v)|\leq 1 \}$, we deduce from $(\textbf{H2})$ that
	$$\begin{aligned}
	\int_{F}|J_3^1|f^{\eta,\e}(x,v)\, dxdv &\leq C \| f^{\eta,\e} \|_{\infty} 	\int_F \sup_{u\in \R^d} |\Theta(u)^{2|x-\mt_1^0(x,v)|+2C|v-\mt^0_2(x,v)|}|f^{\eta,\e}(x,v) \, dxdv\cr
	&\leq C \| f^{\eta,\e} \|_{\infty}\int_F\lt(|x-\mt_1^0(x,v)|+ |v-\mt^0_2(x,v)|\rt)f^{\eta,\e}(x,v) \, dxdv.		
	\end{aligned}$$ 
	On the other hand, we obtain that for $(x,v) \in F^c$ 
	$$	\int_{F^c}|J_3^1|f^{\eta,\e}(x,v)\, dxdv \leq C\int_{F^c} \lt(|x-\mt_1^0(x,v)|+ |v-\mt^0_2(x,v)|\rt)f^{\eta,\e}(x,v)\,dxdv. 
	$$
	Using the similar fashion as the above, we estimate $J_3^2$ for $\e'\leq 1/4$ as 
	$$\int_{\R^{2d}}|J_3^2|f^{\eta,\e}(x,v)\,dxdv \leq C\|f^{\eta,\e}\|_{\infty}+ C\bigl(\| f^{\eta,\e}\|_{\infty}+1\bigr)\int_{\R^{2d}}|x-\mt_1^0(x,v)|f^{\eta,\e}(x,v)\,dxdv. $$
This concludes that
	\[
	\int_{\R^d \times \R^d} |J_3| f^{\eta,\e}(x,v)\,dxdv \leq C\|f^{\eta,\e}\|_{L^\infty}\e' + C(1 + \|f^{\eta,\e}\|_{L^\infty})d_1(f^{\eta,\e},f^{\eta',\e'}).
	\]
$\diamond$ (Estimate of $J_4$): Since $\mb_{K}^{\eta,\e} \leq 1$, we easily have
$$\begin{aligned}
\int_{\R^d \times \R^d} |J_4| f^{\eta,\e}(x,v)\,dxdv &\leq \int_{\R^{2d} \times \R^{2d}} \lt(|w-\mt^0_2(y,w)| + |v-\mt^0_2(x,v)|\rt)f^{\eta,\e}(x,v)f^{\eta,\e}(y,w)\,dxdydvdw\cr
&\leq 2\int_{\R^d \times \R^d} |v - \mt^0_2(x,v)| f^{\eta,\e}(x,v)\,dxdv \cr
&\leq Cd_1(f^{\eta,\e},f^{\eta',\e'}).
\end{aligned}$$
We now combine the all estimates above to have
\[
J \leq C\|f^{\eta,\e}\|_{L^\infty}(\e + \e' + \eta + \eta') + C(1 + \|f^{\eta,\e}\|_{L^\infty})d_1(f^{\eta,\e}, f^{\eta',\e'}) \quad \mbox{for} \quad \eta,\eta',\e,\e' \leq \frac12,
\]
using Proposition \ref{prop-lp}. We conclude that
\[
\frac{d^+}{dt}d_1(f^{\eta,\e}(t),f^{\eta',\e'}(t)) \leq C\lt(d_1(f^{\eta,\e}(t),f^{\eta',\e'}(t)) + \eta+\e+ \eta'+\e'\rt),
\]
for $\frac12 \geq \e,\eta, \e',\eta' > 0$ with $C$ independent of $\eta$, $\e$, $\eta'$, and $\e'$, only depending on the uniform $L^\infty$-norm of the solutions ensured by Propostion \ref{prop-lp}. Here $\frac{d^+}{dt}$ denotes the time-derivative from the right. This implies that 
\[
d_1(f^{\eta,\e}(t),f^{\eta',\e'}(t)) \leq C\lt(\eta+\e+ \eta'+\e'\rt),
\]
for $\frac12 \geq \e,\eta, \e',\eta' > 0$ with $C$ independent of $\eta$, $\e$, $\eta'$, and $\e'$.  
It is straightforward to check that in the previous argument the fact that both solutions have the same initial data does not play any role. Moreover, it is evident going through the proof that we only used the $L^\infty$-norm on one of the solutions, actually $f^{\eta,\e}$.
\end{proof}

%%%%%%%%%%%%%%%%%%%%%%%%%%%%%

\subsection{Passing to the limit as $\eta,\e \to 0$ and proof of Theorem \ref{thm_weak}}\label{sec3.2} It follows from Proposition \ref{prop-grow} that $\{f^{\eta,\e}\}_{\eta,\e >0}$ is a Cauchy sequence in $\mathcal{C}([0,T];\pp_1(\R^d \times \R^d))$, and this implies that there exists a limit curve of measures $f \in \mathcal{C}([0,T];\pp_1(\R^d \times \R^d))$, actually densities $f \in L^{\infty}(0,T;(L_+^1 \cap L^\infty)(\R^d \times \R^d))$ due to Proposition \ref{prop-lp}. Thus it only remains to show that $f$ is a solution of the equations \eqref{sys_kin}. Choose a test function $\Psi(x,v,t) \in \mathcal{C}_c^{\infty}(\R^d \times \R^d \times [0,T])$, then $f^{\eta,\e}$ satisfies
\begin{align}\label{weak-var}
\begin{aligned}
&\int_{\R^d \times \R^d} \Psi_0(x,v)f_0(x,v) dx dv \cr
& \quad = \int_{\R^d \times \R^d} \Psi(x,v,T)f^{\eta,\e}(x,v,T)dx dv + \int_0^T\int_{\R^d \times \R^d} f^{\eta,\e}(x,v,t)\partial_t\Psi(x,v,t)dxdvdt \cr
& \qquad -\int_0^T\int_{\R^d \times \R^d} (\nabla_x \Psi) \cdot v f^{\eta,\e} dx dvdt - \int_0^T\int_{\R^d \times \R^d} (\nabla_v \Psi) \cdot F^{\eta,\e}(f^{\eta,\e})f^{\eta,\e} dx dv dt.
\end{aligned}
\end{align}
We can easily show that the first, second, and third terms in the rhs of \eqref{weak-var} converge to their corresponding limits with $f$ instead of $f^{\eta,\e}$ due to their linearity and
$f^{\eta,\e} \to f$ in $\mathcal{C}([0,T];\pp_1(\R^d \times \R^d))$.  In order to show
\bq\label{est_conv1}
\int_0^T\int_{\R^d \times \R^d} (\nabla_v \Psi) \cdot F^{\eta,\e}(f^{\eta,\e})f^{\eta,\e} dx dv dt \to \int_0^T\int_{\R^d \times \R^d} (\nabla_v \Psi) \cdot F(f)f dx dv dt,
\eq
we provide two results.
\begin{lemma}\label{lem_conv11} $F(f)(x,v,t)$ is locally Lipschitz continuous in $(x,v)\in \R^d\times \R^d$ for all $t\geq 0$.
\end{lemma}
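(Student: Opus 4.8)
The plan is to reduce the claim to the estimate already carried out in the proof of Proposition~\ref{prop:Strat}. The only thing to check is that the limit density $f$ produced in Subsection~\ref{sec3.2} enjoys the two quantitative properties that that argument uses: a uniform $L^1\cap L^\infty$ bound and a uniform compact support in velocity. The first follows from Proposition~\ref{prop-lp} by lower semicontinuity of the $L^\infty$-norm under weak-$*$ convergence, which is implied by the convergence $f^{\eta,\e}\to f$ in $\mathcal{C}([0,T];\pp_1(\R^d\times\R^d))$; hence $\|f(\cdot,\cdot,t)\|_{L^1\cap L^\infty}\le C$ on $[0,T]$, and in particular $\rho(\cdot,t):=\int_{\R^d} f(\cdot,v,t)\,dv$ is bounded in $L^\infty$. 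The second follows from Lemma~\ref{lem-gro-vel}: each $f^{\eta,\e}(\cdot,t)$ is supported in velocity in the fixed ball $\tilde\Omega_0=B(0,R_v^0)$, so the $\pp_1$-limit $f(\cdot,\cdot,t)$ is supported in $\overline{B(0,R_v^0)}$ in velocity for every $t\in[0,T]$.

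With these in hand, I would fix $t$, drop it from the notation, and write exactly as in Proposition~\ref{prop:Strat}
\[
F(f)(x,v) - F(f)(\tilde x,\tilde v) = I_1 + I_2 + I_3,
\]
where $I_1$ gathers the difference $\mb_{K(v)}(x-y)-\mb_{K(\tilde v)}(x-y)$, $I_2$ the difference $\mb_{K(\tilde v)}(x-y)-\mb_{K(\tilde v)}(\tilde x-y)$, and $I_3$ the replacement of $(w-v)$ by $(w-\tilde v)$ in front of $\mb_{K(\tilde v)}(\tilde x-y)$. For small increments, ${\bf (H2)}$-$(iii)$ bounds the integrand of $I_1$ by $\mb_{\Theta(v)^{C|v-\tilde v|,+}}(x-y)$, and ${\bf (H2)}$-$(ii)$ together with $\|\rho\|_{L^\infty}<\infty$ and the velocity cutoff gives $I_1\le C(1+|v|)|v-\tilde v|$; Lemma~\ref{lem_est2} and ${\bf (H2)}$-$(ii)$ likewise give $I_2\le C(1+|v|)\,|\pa^{2|x-\tilde x|}K(\tilde v)|\le C(1+|v|)|x-\tilde x|$; and $I_3\le C\|f\|_{L^1}|v-\tilde v|$ trivially. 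For increments bounded below the three terms are dominated by $C(1+|v|)\|f\|_{L^1}$, which is then reabsorbed into $C(1+|v|)|v-\tilde v|$ or $C(1+|v|)|x-\tilde x|$. Summing yields
\[
|F(f)(x,v)-F(f)(\tilde x,\tilde v)|\le C(1+|v|)\,|(x,v)-(\tilde x,\tilde v)|,
\]
uniformly on $[0,T]$, which is the asserted local Lipschitz continuity.

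I do not expect a genuine obstacle here: the computation is purely geometric and rests only on hypotheses ${\bf (H2)}$-$(i),(ii),(iii)$, identically to Proposition~\ref{prop:Strat}. The one point deserving a line of care is the passage of the uniform-in-$(\eta,\e)$ bounds to the limit $f$ (lower semicontinuity of $\|\cdot\|_{L^\infty}$ and closedness of the velocity support under $\pp_1$-convergence); everything else is the bookkeeping of the two regimes already present in the earlier proof.
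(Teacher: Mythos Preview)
Your proposal is correct and follows exactly the paper's approach: the paper's proof of this lemma is simply the one-line reference ``Its proof was given in subsection~\ref{sec_2.5}'', i.e., Proposition~\ref{prop:Strat}. Your additional care in verifying that the limit $f$ inherits the uniform $L^1\cap L^\infty$ bound and the compact velocity support needed to run that argument is appropriate and more thorough than the paper itself.
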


Its proof was given in subsection \ref{sec_2.5}. The next property follows by similar arguments.

\begin{lemma}\label{lem_conv2} The vector field 
\[
G(f^{\eta,\e})(y,w) := \int_{\R^d \times \R^d} \mb_{K(v)}(x-y) \lt( \nabla_v \Psi(x,v)\cdot (w-v)\rt) f^{\eta,\e}(x,v)\,dxdv
\]
is Lipschitz continuous in $(x,v)\in \R^d\times \tilde \Omega_0$ for all $t\geq 0$. Here the Lipschitz constant is independent of the regularization parameters $\eta$ and $\e$. 
\end{lemma}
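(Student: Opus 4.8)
The plan is to run the same scheme as in the proofs of Proposition~\ref{prop:Strat} and Lemma~\ref{lem_conv11}, but the argument here is simpler: in $G(f^{\eta,\e})$ the set $K(v)$ is never deformed when the arguments $(y,w)$ are moved -- only the point $x-y$ at which the indicator is evaluated gets translated -- so a Lemma~\ref{lem_est2}-type estimate will suffice, and neither ${\bf(H2)}$-$(iii)$ nor $(iv)$ will be needed. I would bound the variations in $w$ and in $y$ separately. Since $w$ enters linearly through $w-v$, for all $y\in\R^d$ and $w_1,w_2\in\R^d$ one gets immediately
\[
|G(f^{\eta,\e})(y,w_1) - G(f^{\eta,\e})(y,w_2)| \le \|\nabla_v\Psi\|_{L^\infty}\,\|f^{\eta,\e}\|_{L^1}\,|w_1-w_2| ,
\]
which is uniform in $\eta,\e$ by conservation of mass.

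For the $y$-variation I would fix $w\in\tilde\Omega_0$, write
\[
G(f^{\eta,\e})(y_1,w) - G(f^{\eta,\e})(y_2,w) = \int_{\R^d\times\R^d}\bigl(\mb_{K(v)}(x-y_1) - \mb_{K(v)}(x-y_2)\bigr)\bigl(\nabla_v\Psi(x,v)\cdot(w-v)\bigr)f^{\eta,\e}(x,v)\,dxdv ,
\]
and apply Lemma~\ref{lem_est2}, leaving one of the two displacement pairs unchanged (so that, in the notation there, $\e_1=|y_1-y_2|$ and $\e_2=0$), which bounds the difference of indicators by $\mb_{\pa^{2|y_1-y_2|}K(v)}(x-y_1)$. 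Using $|w|,|v|\le R^0_v$ -- valid because $w\in\tilde\Omega_0$ and the velocity support of $f^{\eta,\e}$ lies in $\tilde\Omega_0$ by Lemma~\ref{lem-gro-vel} -- and the boundedness of $\nabla_v\Psi$, for $|y_1-y_2|\le 1/2$ this yields
\[
|G(f^{\eta,\e})(y_1,w) - G(f^{\eta,\e})(y_2,w)| \le C\|f^{\eta,\e}\|_{L^\infty}\int_{\tilde\Omega_0}|\pa^{2|y_1-y_2|}K(v)|\,dv \le C\|f^{\eta,\e}\|_{L^\infty}\,|\tilde\Omega_0|\,|y_1-y_2| ,
\]
where the last step uses $\pa^{2|y_1-y_2|}K(v)\subset\Theta(v)^{2|y_1-y_2|,+}$ (from ${\bf(H2)}$-$(i)$) and ${\bf(H2)}$-$(ii)$, the latter applicable precisely because $2|y_1-y_2|<1$. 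For $|y_1-y_2|\ge 1/2$ I would simply use the crude bound $|G(f^{\eta,\e})(y_1,w)-G(f^{\eta,\e})(y_2,w)|\le C\|f^{\eta,\e}\|_{L^1}\le C\|f^{\eta,\e}\|_{L^1}|y_1-y_2|$.

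Combining the two pieces by the triangle inequality gives that $G(f^{\eta,\e})$ is Lipschitz on $\R^d\times\tilde\Omega_0$, uniformly in $t\in[0,T]$, with a constant depending only on $\|\nabla_v\Psi\|_{L^\infty}$, $R^0_v$, $|\tilde\Omega_0|$, the constant in ${\bf(H2)}$-$(ii)$ and $\|f^{\eta,\e}\|_{L^1\cap L^\infty}$; by Proposition~\ref{prop-lp} the last quantity is bounded uniformly in $\eta,\e$ (and $R^0_v,\tilde\Omega_0$ are fixed by Lemma~\ref{lem-gro-vel}), which produces the claimed independence of the Lipschitz constant from the regularization parameters. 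I do not expect a genuine obstacle here: the only point needing attention is the near-diagonal regime $|y_1-y_2|\le 1/2$, where ${\bf(H2)}$-$(ii)$ (stated only for $\e\in(0,1)$) is invoked, while the far regime is handled by the trivial $L^1$ bound; the whole argument is a one-variable simplification of the $I_2$-term estimate in Proposition~\ref{prop:Strat}.
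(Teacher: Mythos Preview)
Your proof is correct and follows essentially the same approach as the paper's: both split the difference into a $w$-variation term (controlled trivially by $\|\nabla_v\Psi\|_{L^\infty}$ and the $L^1$-norm) and a $y$-variation term handled via Lemma~\ref{lem_est2}, the compact velocity support from Lemma~\ref{lem-gro-vel}, and the near/far dichotomy on $|y_1-y_2|$ with ${\bf(H2)}$-$(i),(ii)$. The only cosmetic difference is that the paper moves $(y,w)\to(y',w')$ jointly and splits afterward, whereas you vary $y$ and $w$ one at a time and combine by the triangle inequality; the estimates and tools are identical.
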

\begin{proof}For $(y,w), (y',w') \in \R^d \times \tilde\om_0$, we obtain 
$$\begin{aligned}
&|G(f^{\eta,\e})(y,w) - G(f^{\eta,\e})(y',w')| \cr
&\quad = \lt|\int_{\R^d \times \R^d} \nabla_v \Psi(x,v)\cdot \lt(\mb_{K(v)}(x-y)(w-v) - \mb_{K(v)}(x-y')(w'-v)\rt)f^{\eta,\e}(x,v)\,dxdv\rt|\cr
&\quad \leq \lt|\int_{\R^d \times \R^d} \mb_{K(v)}(x-y) \lt(\nabla_v \Psi(x,v)\cdot \lt(w - w'\rt)\rt) f^{\eta,\e}(x,v)\,dxdv\rt|\cr
&\qquad + \lt|\int_{\R^d \times \R^d} \lt(\mb_{K(v)}(x-y) - \mb_{K(v)}(x-y')\rt) \lt(\nabla_v \Psi(x,v)\cdot (w'-v)\rt) f^{\eta,\e}(x,v)\,dxdv\rt|\cr
&\quad =: I + J,\cr
\end{aligned}$$
where $I$ is easily estimated by.
\[
I \leq \|\nabla_v \Psi\|_{L^\infty}|w - w'|.
\]
For the estimate of $J$, we first consider the case $|y - y'| \leq 1/2$. In this case, we obtain that
$$\begin{aligned}
J &\leq C\|\nabla_v \Psi\|_{L^\infty}\int_{\R^d \times \tilde\om_0} \mb_{\pa^{2|y -y'|}K(v)}(x-y)f^{\eta,\e}(x,v)\,dxdv \leq C\|\nabla_v \Psi\|_{L^\infty}\|f^{\eta,\e}\|_{L^\infty}|y-y'|,
\end{aligned}$$
 due to the compact support in velocity, ${\bf (H2)}$ and Lemma \ref{lem_est2} .
On the other hand, if $|y - y'| \geq 1/2$ we get
\[
J \leq C\|\nabla_v\Psi\|_{L^\infty} \leq C\|\nabla_v \Psi\|_{L^\infty} |y - y'|.
\]
This yields that for any $y,y' \in \R^d$ we have
\[
J \leq C\|\nabla_v \Psi\|_{L^\infty}(1 + \|f^{\eta,\e}\|_{L^\infty})|y-y'|,
\]
concluding the proof.
\end{proof}

We are now in a position to estimate the convergence \eqref{est_conv1}. Note that
$$\begin{aligned}
&\lt| \int_0^T \int_{\R^d \times \R^d} \nabla_v \Psi \cdot (F^{\eta,\e} (f^{\eta,\e})f^{\eta,\e} - F(f)f)\,dxdvdt\rt|\cr
& \quad = \lt| \int_0^T \int_{\R^d \times \R^d} \nabla_v \Psi \cdot (F^{\eta,\e} (f^{\eta,\e})f^{\eta,\e} - F(f^{\eta,\e})f^{\eta,\e})\,dxdvdt\rt| \cr
&\qquad + \lt| \int_0^T \int_{\R^d \times \R^d} \nabla_v \Psi \cdot (F (f^{\eta,\e})f^{\eta,\e} - F(f)f^{\eta,\e})\,dxdvdt\rt|\cr
&\qquad + \lt| \int_0^T \int_{\R^d \times \R^d} \nabla_v \Psi \cdot (F (f)f^{\eta,\e} - F(f)f)\,dxdvdt\rt|\cr
&\quad =:\sum_{i=1}^3 K_i.
\end{aligned}$$
$\diamond$ Estimate of $K_1$: A straightforward computation yields that
$$\begin{aligned}
&K_1 \leq \lt| \int_0^T \int_{\R^{2d} \times \R^{2d}} \nabla_v \Psi \cdot \lt(\mb_{K(v)}^{\eta,\e}(x-y) - \mb_{K(v)}^\eta(x-y) \rt)(w-v)f^{\eta,\e}(y,w)f^{\eta,\e}(x,v)\,dydwdxdvdt \rt|\cr
&\qquad+\lt| \int_0^T \int_{\R^{2d} \times \R^{2d}} \nabla_v \Psi \cdot \lt(\mb_{K(v)}^{\eta}(x-y) - \mb_{K(v)}(x-y) \rt)(w-v)f^{\eta,\e}(y,w)f^{\eta,\e}(x,v)\,dydwdxdvdt \rt|\cr
&\quad\leq C\|\nabla_v \Psi\|_{L^\infty}\int_0^T\int_{\R^{2d} \times \R^{2d}} \mb_{\pa^{2\e}K(v)}^\eta(x-y) f^{\eta,\e}(y,w)f^{\eta,\e}(x,v)\,dydwdxdvdt \cr
&\qquad +CT\|\nabla_v \Psi\|_{L^\infty}\|f^{\eta,\e}\|_{L^\infty}\eta\cr
&\quad \leq CT\|\nabla_v \Psi\|_{L^\infty}\|f^{\eta,\e}\|_{L^\infty}(\e + \eta).
\end{aligned}$$
Here, we have used the same arguments as in previous Lemma and in \eqref{est_ii} using the compact support in velocity, \eqref{lem_diff1}, \eqref{lem_add} and  ${\bf (H2)}$.
Thus we have $K_1 \to 0$ as $\e \to 0$ and $\eta \to 0$.

$\diamond$ Estimate of $K_2$: We first notice that
$$\begin{aligned}
K_2 &= \lt|\int_0^T \int_{\R^{2d} \times \R^{2d}} \nabla_v \Psi \cdot \mb_{K(v)}(x-y)(w-v)(f^{\eta,\e}(y,w) - f(y,w))f^{\eta,\e}(x,v)\,dxdydvdw dt\rt|\cr
&= \lt|\int_0^T \int_{\R^d \times \R^d} G(f^{\eta,\e})(y,w)(f^{\eta,\e}(y,w) - f(y,w))dydwdt\rt|,
\end{aligned}$$
and $\|G(f^{\eta,\e})\|_{W^{1,\infty}(\R^d \times \tilde\om_0)} \leq C\|\nabla_v \Psi\|_{L^\infty}$. Then it follows from duality in \eqref{blipd} that
\[
K_2 \leq Cd_1(f^{\eta,\e}, f) \to 0 \quad \mbox{as} \quad \eta,\e \to 0.
\]

$\diamond$ Estimate of $K_3$: Similarly, we deduce from Lemma \ref{lem_conv11} that
\[
K_3 = \lt|\int_0^T \int_{\R^d \times \R^d} \nabla_v \Psi \cdot F(f)(x,v) \lt(f^{\eta,\e}(x,v) - f(x,v)\rt)\,dxdvdt \rt| \leq Cd_1(f^{\eta,\e}, f) \to 0,
\]
as $\eta,\e \to 0$. Hence we conclude that $f$ is a solution of the system \eqref{sys_kin}. 

Uniqueness of weak solutions $f^{\eta,\e}$ just follows from the second statement in Proposition \ref{prop-grow}. To be more precise, let $f_1, f_2 \in L^{\infty}(0,T;(L_+^1\cap L^\infty)(\R^d \times \R^d)) \cap \mathcal{C}([0,T],\pp_1(\R^d \times \R^d))$ be the weak solutions to the system \eqref{sys_kin} with same initial data $f^0 \in (L_+^1 \cap L^\infty)(\R^d \times \R^d) \cap \pp_1(\R^d \times \R^d)$. Then Proposition \ref{prop-grow} yields that
\begin{equation*}
\frac{d}{dt}d_1(f_1(t),f_2(t)) \leq C(1 + \|f_1\|_{L^\infty_{x,v}})d_1(f_1(t),f_2(t)), \quad \mbox{for} \quad t \in [0,T].
\end{equation*}
This completes the proof of Theorem \ref{thm_weak}. Let us obtain further properties of the solutions.

\begin{corollary} Let $f$ be a weak solutions to \eqref{sys_kin} on the time-interval $[0,T)$. Then $f$ is determined as the push-forward of the initial density through the flow map generated by $(v,F(f))$.
\end{corollary}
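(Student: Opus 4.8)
The plan is to identify $f$ with $g(t) := \mmz(t;0,\cdot,\cdot)\#f_0$, where $\mmz$ is the flow of the time-dependent velocity field $b(t,x,v) := (v,\,F(f)(x,v,t))$ introduced in \eqref{traj1}, and then close the loop by uniqueness for the associated \emph{linear} continuity equation. First I would record that $\mmz$ is globally defined on $[0,T]$ and bi-Lipschitz on phase space: by Proposition~\ref{prop:Strat} (equivalently Lemma~\ref{lem_conv11}) the field $b$ is continuous, has linear growth, and is Lipschitz in $(x,v)$ with a constant that is \emph{uniform} on any velocity-bounded slab $\R^d\times\tilde\om_0$; moreover such a slab containing the velocity support of $f_0$ is forward invariant for the characteristics, since along a trajectory $\frac{d}{dt}|V|^2 = 2F(f)(X,V,t)\cdot V \le 0$ as soon as $|V|$ exceeds the velocity-support radius of $f$, exactly as in Lemma~\ref{lem-gro-vel}. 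Consequently $\mmz(t;0,\cdot,\cdot)$ is a bi-Lipschitz homeomorphism of $\R^d\times\R^d$ for each $t\in[0,T]$, the slab $\R^d\times\tilde\om_0$ is preserved, and $g(t)=\mmz(t;0,\cdot,\cdot)\#f_0$ inherits from $f_0$ the bounds $g\in L^\infty(0,T;(L^1_+\cap L^\infty)(\R^d\times\R^d))\cap\mc([0,T];\pp_1(\R^d\times\R^d))$ and compact support in velocity inside $\tilde\om_0$; the $L^\infty$ control uses the change-of-variables formula together with the bound on $\|\nabla_v\cdot F(f)\|_{L^\infty([0,T]\times\R^{2d})}$ already obtained in the proof of Proposition~\ref{prop-lp}.

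Next I would observe that both $f$ and $g$ are weak solutions, in the sense of Definition~\ref{def_weak}, of the \emph{linear} continuity equation $\partial_t h + v\cdot\nabla_x h + \nabla_v\cdot(F(f)\,h)=0$ with the now-\emph{fixed} coefficient field $b$ and initial datum $f_0$: for $f$ this is immediate (it is the defining weak formulation), while for $g$ it is the standard statement that the push-forward of a datum by the flow of a Lipschitz, bounded-divergence field solves the continuity equation. I would then invoke uniqueness for that linear problem in the class $L^\infty_t(L^1_+\cap L^\infty)\cap\mc([0,T];\pp_1)$ to conclude $f=g$, i.e. $f(t)=\mmz(t;0,\cdot,\cdot)\#f_0$ for all $t\in[0,T]$. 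The uniqueness is classical transport theory since $b$ is globally Lipschitz on the invariant slab with bounded divergence; concretely one runs a duality argument: given $\varphi\in\mc_c^\infty(\R^{2d})$, solve the backward transport equation $\partial_t\psi + b\cdot\nabla_{x,v}\psi=0$, $\psi(\cdot,T)=\varphi$, which yields the Lipschitz function $\psi(t,z)=\varphi(\mmz(T;t,z))$, test the difference $h:=f-g$ (which has zero datum) against $\psi$, and deduce $\int h(\cdot,T)\varphi=0$; arbitrariness of $\varphi$ gives $h\equiv 0$.

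The step that needs the most care — and the main obstacle — is making this duality argument rigorous despite $b$ being only Lipschitz, not smooth, so that $\psi$ is merely Lipschitz and not an admissible test function in Definition~\ref{def_weak}. I would handle this by mollifying $F(f)$ in $x$ to obtain smooth fields $b_\delta$, solving the backward transport with $b_\delta$ to get smooth $\psi_\delta$ with $\|\nabla_{x,v}\psi_\delta\|_{L^\infty}\le e^{LT}\|\nabla\varphi\|_{L^\infty}$ uniformly in $\delta$ (thanks to the uniform Lipschitz bound $L$ on the slab), plugging $\psi_\delta$ into the weak formulation for $h$, and estimating the resulting error by $\|h\|_{L^\infty}\,\|b-b_\delta\|_{L^1(\text{slab})}\,\|\nabla\psi_\delta\|_{L^\infty}\to 0$ as $\delta\to 0$. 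An essentially equivalent route, avoiding continuity-equation uniqueness altogether, is to recall from Section~\ref{sec3.2} that $f=\lim_{\eta,\e\to0}f^{\eta,\e}$ in $\mc([0,T];\pp_1)$ with $f^{\eta,\e}(t)=Z^{\eta,\e}(t;0,\cdot,\cdot)\#f_0$, and to pass to the limit in this identity; this requires establishing $Z^{\eta,\e}(t;0,\cdot,\cdot)\to\mmz(t;0,\cdot,\cdot)$ uniformly on $\mathrm{supp}\,f_0$ via a Gronwall comparison of the ODE systems \eqref{traj-vel-est} and \eqref{traj1}, using the uniform velocity bound of Lemma~\ref{lem-gro-vel}, the uniform Lipschitz bounds behind Proposition~\ref{prop-lp}, and the convergence $F^{\eta,\e}(f^{\eta,\e})\to F(f)$ — the delicate point of this alternative being precisely the control of $F^{\eta,\e}(f^{\eta,\e})-F(f)$ on the slab, which combines the mollification error $O(\e+\eta)$ (bounded uniformly by $(\textbf{H2})$ together with \eqref{lem_diff1}–\eqref{lem_add}-type estimates) with a term of the form $F(f^{\eta,\e})-F(f)$ handled as in the estimates of $K_2$ in Section~\ref{sec3.2}. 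Either way, continuity of the push-forward of the fixed, compactly supported datum $f_0$ under (locally) uniform convergence of the maps yields the claimed representation.
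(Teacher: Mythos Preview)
Your proposal is correct and follows essentially the same route as the paper: the paper's proof also establishes well-posedness of the flow \eqref{flow} from the local Lipschitz property of $F(f)$ and compact velocity support, and then appeals to ``standard duality arguments using suitable test functions in the weak formulation'' to obtain the push-forward identity directly. Your version is simply more explicit about the one technical point the paper suppresses---namely that the backward-transported function $\psi(t,z)=\varphi(\mmz(T;t,z))$ is only Lipschitz and must be approximated by smooth test functions before it can be inserted in Definition~\ref{def_weak}---and your mollification scheme (with the uniform gradient bound $e^{LT}\|\nabla\varphi\|_{L^\infty}$) is the standard way to close that gap; the alternative limiting route you sketch via $Z^{\eta,\e}\to\mmz$ is also valid but is not what the paper does.
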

\begin{proof}
Consider the following flow map:
\begin{equation}\label{flow}
\left\{ \begin{array}{ll}
\displaystyle \frac{d}{dt}X(t;s,x,v) = V(t;s,x,v), & \\[2mm]
\displaystyle \frac{d}{dt}V(t;s,x,v) = F(f)(X(t;s,x,v),V(t;s,x,v),t), &\\[4mm]
(X(s;s,x,v),V(s;s,x,v)) = (x,v),&
\end{array} \right.
\end{equation}
for all $s,t\in[0,T]$. Then since $f$ has compact support in $v$, the flow map \eqref{flow} is well-defined using the local Lispchitzianity of the velocity fields as in the proof of Proposition \ref{prop-grow}. Standard duality arguments using suitable test functions in the weak formulation of the solution leads to
\[
\int_{\R^d \times \R^d} h(x,v)f(x,v,t) dx dv = \int_{\R^d \times \R^d}h(X(0;t,x,v),V(0;t,x,v))f_0(x,v)dxdv,
\]
for $t \in [0,T]$ for all test functions $h$. This yields that $f$ is determined as the push-forward of the initial density through the flow map \eqref{flow}. 
\end{proof}

\begin{corollary}\label{cor_sta} Suppose that $f$ is a solution to the system \eqref{sys_kin} satisfying $\|f\|_{L^\infty(0,T;L^1 \cap L^\infty)} < \infty$ for some $T > 0$. Then for any global measure solution $\mu$ to the same system \eqref{sys_kin} with finite first order moment in $(x,v)$ we have
\[
d_1(f(t),\mu(t)) \leq d_1(f_0,\mu_0)\exp\lt(C\lt(1+\|f\|_{L^\infty(0,T;L^1 \cap L^\infty)}\rt)t\rt),
\]
for all $t \in [0,T]$.
\end{corollary}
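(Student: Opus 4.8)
The plan is to run the proof of Theorem~\ref{main} essentially verbatim, with the empirical measure $\mu^N$ replaced by the arbitrary measure solution $\mu$, paying attention to the fact that now the $L^1\cap L^\infty$ bound is available only for $f$ and not for $\mu$. The point is that, inspecting the estimates of $I$, $J_1$, $J_2$, $J_3$ in Section~3, the boundedness and the compact velocity support are used \emph{exclusively} for the solution sitting in the outer integral, whose density multiplies the indicator differences; in the coupling below that solution will be $f$. Hence, once the geometric setup is in place, the computation transfers with essentially no change, and the $\exp\bigl(C(1+\|f\|_{L^1\cap L^\infty})t\bigr)$ factor is exactly what a single application of Gr\"onwall produces.

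First I would fix $t_0\in[0,T)$ and put both solutions in Lagrangian form. For $f$ this is the content of the preceding corollary together with Proposition~\ref{prop:Strat}: $f$ is transported by the flow \eqref{flow} of the locally Lipschitz field $(v,F(f))$, so $f(t)=\mmz(t;t_0,\cdot,\cdot)\#f(t_0)$. For $\mu$ I would invoke a Lagrangian representation: since $\mu$ has finite first moment and $F(\mu)(x,v,t)\le C(1+|v|)$ grows at most linearly in $v$ (the bound in Proposition~\ref{prop:Strat}, which needs no bound on $\mu$), the field $(v,F(\mu))$ is admissible for the superposition principle, $\mu$ solving the continuity equation with this \emph{fixed} (though possibly discontinuous) field; hence $\mu(t)=\mmz^\mu(t;t_0,\cdot,\cdot)\#\mu(t_0)$ along characteristics of the system $\dot X=V$, $\dot V=F(\mu)(X,V,t)$, which in particular select, at a.e.\ time, a slope inside the set $\mathcal{A}$.

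Next, since $f(t_0)$ is absolutely continuous, I would pick an optimal transport map $\mt^0$ with $\mu(t_0)=\mt^0\#f(t_0)$, transport it to $\mt^t=\mmz^\mu(t;t_0,\cdot,\cdot)\circ\mt^0\circ\mmz(t_0;t,\cdot,\cdot)$ so that $\mt^t\#f(t)=\mu(t)$, and set
\[
Q(t):=\int_{\R^d\times\R^d}\bigl|\mmz(t;t_0,x,v)-\mmz^\mu(t;t_0,\mt^0(x,v))\bigr|\,f(x,v,t_0)\,dxdv\ \ge\ d_1(f(t),\mu(t)).
\]
Differentiating $Q$ at $t=t_0^+$ and splitting into the velocity term $I$ and the force term $J$ exactly as in Section~3, the bound $I\le d_1(f(t_0),\mu(t_0))$ is unchanged; for $J$ I would push the $\mu$-integral inside $F(\mu)$ forward through $\mt^0$ to turn it into an $f(t_0)$-integral and then reproduce the decomposition $J=J_1+J_2+J_3$ of the proof of Theorem~\ref{main} word for word. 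Each $J_i$ is then bounded by $C\,d_1(f(t_0),\mu(t_0))$ with $C$ depending only on $\|f\|_{L^\infty}$, on $\|\rho_f\|_{L^\infty}$ with $\rho_f=\int f\,dv$, on the (compact, by Theorem~\ref{thm_weak}) velocity support of $f$, and on ${\bf (H1)}$--${\bf (H2)}$; the measure $\mu$ never appears through an $L^\infty$ norm, only through the coupling $\mt^0$. Since $t_0\in[0,T)$ is arbitrary, this yields $\frac{d^+}{dt}d_1(f(t),\mu(t))\le C\bigl(1+\|f\|_{L^\infty(0,T;L^1\cap L^\infty)}\bigr)\,d_1(f(t),\mu(t))$ on $[0,T]$, and Gr\"onwall's lemma closes the argument.

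The one genuinely new ingredient, and the step I expect to be the main obstacle, is the Lagrangian representation of the merely-measure solution $\mu$ for the discontinuous velocity field $(v,F(\mu))$: for empirical measures it came for free from the differential inclusion of Section~\ref{sec_23}, whereas here one must justify, via the superposition principle and using only boundedness and linear growth of $F(\mu)$, that $\mu$ is carried by characteristics of a measurable selection of the inclusion. Once that is granted, the remainder is a line-by-line transcription of Section~3 in which one simply records that the $L^\infty$ hypothesis is invoked for $f$ alone.
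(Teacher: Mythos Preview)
Your approach is essentially that of the paper: its one-sentence proof simply points back to Proposition~\ref{prop-grow} and records that in the stability computation only the $L^1\cap L^\infty$ norm of \emph{one} of the two solutions is ever invoked, which is precisely the mechanism you exploit (via the parallel computation of Section~3 rather than Proposition~\ref{prop-grow}). The Lagrangian/superposition representation of the general measure solution $\mu$ that you rightly flag as the new technical step is not addressed in the paper's proof either; it is taken for granted.
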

\begin{proof} The proof can be easily obtained from the observation made in the second part of the statement in Proposition \ref{prop-grow} since we only require to use $L^1 \cap L^\infty$-norm of one of solutions to the system \eqref{sys_kin}.
\end{proof}
\begin{remark} Our strategies can also be applied to various other communicate weights. For example, we can consider the linear combination of different communication weights:
\[
\tilde \mb^m_{K(v)}(x-y):= \sum_{j=1}^m w_j \mb_{K^j(v)}(x-y) \qquad \mbox{for some} \quad m \geq 1, 
\]
where $w_j \in \R_+$ and $K^j(v)$ satisfies the assumptions {\bf (H1)}-{\bf (H2)}. \end{remark}

%%%%%%%%%%%%%%%%%%%%%%%%%%%%%%%%%%%%%%%%%%%%%%%%

\section{Examples of Sensitivity Regions}\label{sec:5}
\subsection{A ball in $\R^d$}\label{sec:5_1}We consider the case $K(v) = B(0,r) := \{x \in \R^d : |x| \leq r\}$ with $r > 0$. 

In that case, we choose $\Theta(v) = \pa B(0,r)$. We notice that it is enough to check the condition $(\textbf{H2})$-$(ii)$ for the existence of solutions and the mean-field limit since the influence set $K$ is independent of the velocity variable. We can easily check that for $0 <\e \leq 1$
\begin{displaymath}
|\pa^\e B(0,r)| =\left\{ \begin{array}{ll}
\alpha(d)(r+\e)^d \leq \alpha(d)2^d \e^d \leq C(d)\e & \textrm{if $r \leq \e$,}\\[2mm]
\alpha(d)\lt( (r + \e)^d - (r-\e)^d\rt) = \alpha(d) 2\e \sum_{k=0}^{d-1}(r+\e)^k (r- \e)^{d-1-k} \leq C(d,r)\e  & \textrm{if $\e < r$},
  \end{array} \right.
\end{displaymath}
where $\alpha(d)$ is the volume of unit ball in $\R^d$. 
\subsection{A ball with radius evolving regularly with respect to velocity in $\R^d$} Let $\tilde{r}:\R_{+}\rightarrow \R_{+}$ be bounded and Lipschitz function, and consider the case $K(v)=B(0,\tilde{r}(|v|))$. 

In this case, it is clear to show that $B(0,\tilde{r}(|v|))$ satisfies ${\bf (H1)}$ due to the boundedness of function $\tilde r$. Moreover, by choosing $\Theta(v) = \pa B(0,\tilde{r}(|v|))$, we can easily verify that $B(0,\tilde{r}(|v|))$ satisfies ${\bf (H2)}$-$(i)$ and $(ii)$(see also previous section). Concerning the conditions ${\bf (H2)}$-$(iii)$ and $(iv)$, we notice that the symmetric difference $B(0,\tilde{r}(|v|)) \Delta B(0,\tilde{r}(|w|))$ has a form of torus which can also be expressed by the enlargement of $\pa B(0,\tilde{r}(|v|))$. Thus it is very clear that $B(0,\tilde{r}(|v|))$ satisfies ${\bf (H2)}$-$(iii)$ and $(iv)$ with the constant $C = \|\tilde r\|_{Lip}$.

\subsection{A vision cone in $\R^d$ with $d=2,3$} We consider $K(v)=C(r,v,\theta(|v|))$ which is given by
\[
C(r,v,\theta(|v|)) :=\lt\{ x : |x| \leq r \quad \mbox{and} \quad -\theta(|v|) \leq \cos^{-1}\lt(\frac{x \cdot v}{|x||v|} \rt) \leq \theta(|v|) \rt\},
\]
with $0 < \theta(z) \in \mc^\infty(\R_+)$ satisfying $\theta(z) = \pi$ for $0 \leq  z \leq 1$, $\theta(z)$ is decreasing for $z \geq 1$, and $\theta(z) \to \theta_* > 0$ as $|z| \to + \infty$.
\begin{figure}[lt]
        \centering
        \mbox{
         \subfigure[Vision cone]
         {\includegraphics[width=4.9cm,height=5.2cm]{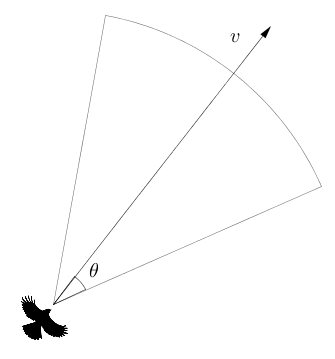}}
         \qquad
         \subfigure[Function of angle]
         {\includegraphics[width=8.0cm,height=5.0cm]{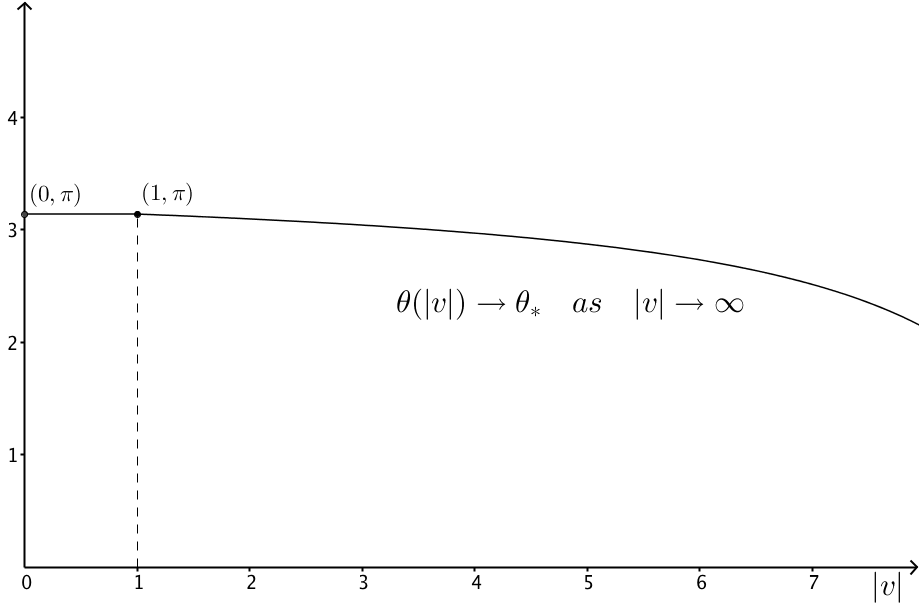}}
         }
         \caption{The vision cone is getting larger in covered angle as the speed gets smaller being a full ball at some chosen speed, for instance $|v|=1$.}
\end{figure}

In the remaining part of this subsection, we focus on checking the conditions $({\bf H1}) - ({\bf H2})$ for the case of vision cone.\newline

$\bullet$ $({\bf H1})$: It is obvious to get that $C(r,v,\theta(|v|)) \subseteq B(0,r)$ for all $v \in \R^d$ and $B(0,r)$ is compact set. \newline

We now define the family set $\Theta$ as
$$
\Theta(v):=\begin{cases}
\partial C(r,v,\theta(|v|))\cup R(v)& \text{ if } |v|\in(1/2,1), \\ 
\partial C(r,v,\theta(|v|)) & \text{ else }, 
\end{cases} 
$$    
where $R(v)=[a(v),b(v)]$ with
$$
a(v)=-r\frac{v}{|v|} \quad , \quad b(v)= 2r(|v|-1)\frac{v}{|v|}. 
$$
In other words, $R(v)$ is a segment with varies linearly from $\lt\{- r \frac v{|v|} \rt\}$  when $|v|=1/2$, to the whole segment $\bigl[0, - r \frac v{|v|} \bigr]  $ when $|v| = 1$. The family of sets $\Theta(v)$ interpolates continuously in some sense between the boundary of the ball for $|v|=1/2$ to the limit of the boundary of the cone as $\theta(|v|) \searrow \pi$ for $|v|\searrow 1$ that includes the interval $R(v)=[-rv,0]$, see middle picture in Figure \ref{fig2}.

\medskip
$\bullet$ $({\bf H2})$-$(i)$: Due to definition of $\Theta$, $\pa C(r,v,\theta(v)) \subset \Theta(v)$ for all $v \in \R^d$.
		
\medskip
$\bullet$ $({\bf H2})$-$(ii)$: This is also satisfied since $\Theta(v)$ is made of surfaces and lines, whose total area is bounded uniformly in $v$.  We give more detailed  estimates below.

Remark first that for all $|v|\leq 1$ and $\e\in(0,1)$
$$
|R(v)^{\e,+}| \le C \lt( \e|R(v)|+\e^d \rt) \leq C \e.
$$
Since for any sets $A,B\subset \R^d$,  $(A\cup B)^{\e,+}\subset A^{\e,+}\cup B^{\e,+}$, in order to check $({\bf H2})$-$(ii)$, it is enough to prove that 
$$
|(\partial C(r,v,\theta(|v|)))^{\e,+}|\leq C\e.
$$

We consider the following two cases:\newline

\begin{figure}[lt]
        \centering
        \mbox{
         {\includegraphics[width=8.5cm,height=8.5cm]{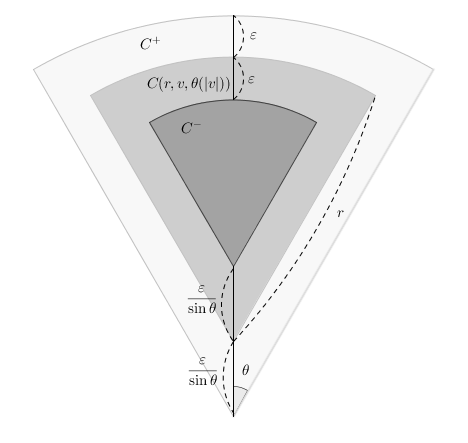}}}
         \caption{Case (i) \,\,$\theta(|v|) \leq \frac{\pi}{2}$.}
         \label{fig_4}
\end{figure}

$\diamond$ Case (i) $0 \leq \theta(|v|) < \pi/2$: By definition of $\theta$, we have that $0 < \theta_* \leq \theta(|v|) < \pi/2$. In two dimensions, we consider the two cones $C^- := C(r-\e - \e/\sin(\theta(|v|)), v, \theta(|v|))$ and $C^+ := C(r+\e + \e/\sin(\theta(|v|)), v, \theta(|v|))$ to measure the $\e$-boundary of $C(r,v,\theta(|v|))$ as
$$\begin{aligned}
|\pa^\e C(r,v,\theta(|v|))| &\leq |C^+| - |C^-| = \theta(|v|)\lt(r + \e + \frac{\e}{\sin\theta(|v|)} \rt)^2 - \theta(|v|)\lt(r - \e - \frac{\e}{\sin\theta(|v|)} \rt)^2\cr
&=4r\e\theta(|v|)\lt(1 + \frac{1}{\sin\theta(|v|)} \rt) \leq 2r\pi\e\lt(1 + \frac{1}{\sin\theta_*} \rt),
\end{aligned}$$
due to $C^{\e,+}(r,v,\theta(|v|)) \subseteq C^+$ and $C^- \subseteq C^{\e,-}(r,v,\theta(|v|))$, see Figure \ref{fig_4}. 

In a similar way, for the three-dimensional case, we obtain
$$\begin{aligned}
|\pa^\e C(r,v,\theta(|v|))| 
&\leq \frac{2\pi}{3}\lt( 1 - \cos\theta(|v|)\rt)\lt[\lt(r + \e + \frac{\e}{\sin\theta(|v|)} \rt)^3 - \lt(r - \e - \frac{\e}{\sin\theta(|v|)} \rt)^3\rt]\cr
&=\frac{4\pi}{3}\e\lt( 1 - \cos\theta(|v|)\rt)\lt(1 + \frac{1}{\sin\theta(|v|)}\rt)\lt( 3r^2 + \lt(1 + \frac{1}{\sin\theta(|v|)}\rt)^2\rt)\cr
&\leq \frac{4\pi}{3}\e\lt(1 + \frac{1}{\sin\theta_*}\rt)\lt( 3r^2 + \lt(1 + \frac{1}{\sin\theta_*}\rt)^2\rt).
\end{aligned}$$
Therefore we deduce that 
\[
\sup_{\lt\{v \in \R^d \,:\, \theta(|v|)\leq \pi/2\rt\}}|\pa^\e C(r,v,\theta(|v|))|\leq C\e
\]

\begin{figure}
\centering
\mbox{
         {\includegraphics[width=11.5cm,height=8.5cm]{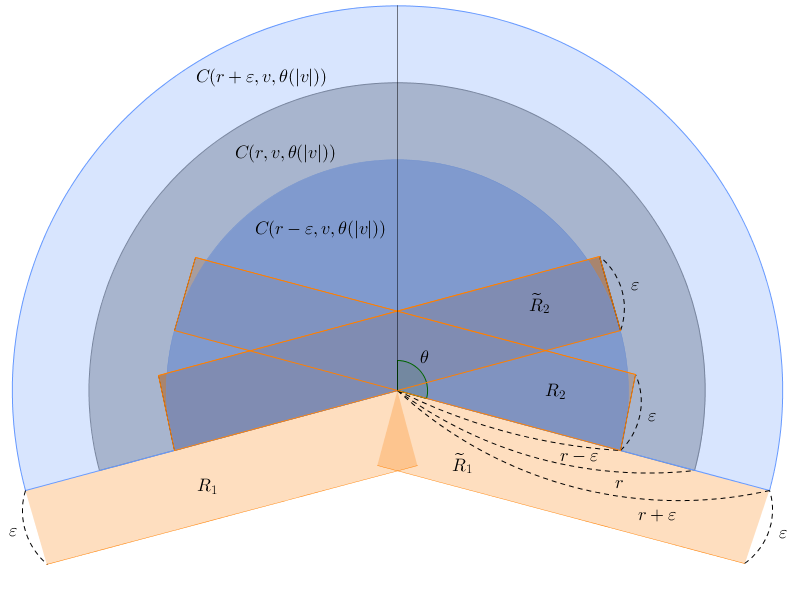}}}
         \caption{Case (ii) \,\,$\theta(|v|) \geq \frac{\pi}{2}$.}
         \label{fig_5}
\end{figure}

$\diamond$ Case (ii) $\pi \geq \theta(|v|) \geq \pi/2$: Using the same argument as above, in two dimensions, we find 
\[
C(r,v,\theta(|v|))^{\e,+} \subseteq C(r+\e,v,\theta(|v|)) \cup R_1 \cup \tilde R_1,
\]
and
\[
C(r - \e,v,\theta(|v|))\setminus(R_2 \cup \tilde R_2) \subseteq C(r,v,\theta(|v|))^{\e,-},
\]
where $R_i$ and $\tilde R_i$, $i=1,2$ are given in Figure \ref{fig_5}. Then this yields 
$$\begin{aligned}
|\pa^\e C(r,v,\theta(|v|))| &\leq |C(r+\e,v,\theta(|v|)) \cup R_1 \cup \tilde R_1| - |C(r - \e,v,\theta(|v|))\setminus(R_2 \cup \tilde R_2)|\cr
&\leq \theta(|v|)(r + \e)^2 + 2\e(r+\e) - \theta(|v|)(r - \e)^2 + 4\e r = 4r\e\theta(|v|) + 2\e(\e + 3r) \leq C\e.
\end{aligned}$$

Let us now consider $d=3$ and denote by $S(R_i)$ for $i=1,2$ the volume of the set obtained by rotating the rectangle $R_i$ about the $v$-vector. We set $R_0$ a rectangle with length $r+\e$ and width $\e$. Then we notice that $R_1$ can be obtained by rotating $R_0$ with respect to one of edges of $R_0$. If we consider that $R_0$ is placed in the right of $yz$ plane, then it is clear that the volume of the solid obtained by rotating the $R_0$ about the $y$-axis is given by $2\pi \int_{R_0} y\,dy\,dz$.
Then it follows from the definition of $R_0$ that there exists a $\tilde\theta \geq 0$ such that 
\[
S(R_1) \leq C\int_{R_0} |y \cos \tilde\theta + z \sin \tilde\theta| \,dy\,dz \leq \e\int_{R_0} |y|\,dy + \e |R_0| \leq C\e,
\]
for some positive constant $C >0$. Since this argument can be applied to other rectangles $ \tilde R_1, R_2, \tilde R_2$ , we get
\[
S(R_1) + S(R_2) + S(\tilde R_1) + S(\tilde R_2) \leq C\e.
\]

Then, in a similar fashion as the above, we find 
$$\begin{aligned}
|\pa^\e C(r,v,\theta(|v|))| &\leq C\lt( (r+\e)^3 - (r-\e)^3 \rt) + C\e \leq 2C\e(3r^2 + \e^2) + C\e \leq C\e.
\end{aligned}$$
This concludes 
\[
\sup_{\lt\{v \in \R^d \,:\, \theta(|v|)\geq \pi/2\rt\}}|\pa^\e C(r,v,\theta(|v|))| \leq C\e.
\]
Combining the estimates in Case (i) and (ii), we conclude that 
\[
\sup_{v \in \R^d}|\pa^\e C(r,v,\theta(|v|))| \leq C\e.
\]

$\bullet$ $({\bf H2})$-$(iii)$ and $(iv)$ :
	The vision cone is an interior domain, delimited by the union of some smooth surfaces. In that case, the symmetric difference between two different cone $K(v) \Delta K(w)$ is the set in-between the two boundaries $\pa K(v)$ and $\pa K(w)$. Since $\pa K(w) \subset \Theta(w)$ by ${\bf (H2)}$-$(i)$ if $\pa K(v) \subset \Theta(w)^{\ep,+}$, we also have $K(v) \Delta K(w) \subset  \Theta(w)^{\ep,+}$, or equivalently in this particular case 
$({\bf H2})$-$(iii)$ is a consequence of $({\bf H2})$-$(iv)$. Thus we focus on the hypothesis $({\bf H2})$-$(iv)$ in the rest of the proof. For this we consider three steps as follows. 

\medskip
\paragraph{ \bf Step 1. The case $|v| = |w|$}
We first check the properties in the simpler case where $|v|=|w|$. It allows to define the rotation $R_{v,w}$ with the only rotation (with an axis orthogonal to $v$ and $w$ if $d=3$) that maps $v$ to $w$. In view of the definition of $\Theta$, it is clear that $\Theta(w) = R_{v,w} \Theta(v)$. 

Now, inside the ball $B(0,r)$, the maximal displacement $d_{max}$ made when we use $R_{v,w}$ is 
\[
d_{max} := \sup_{x \in B_r} |R_{v,w}x -x | = \lt| \frac r{|v|} v  - \frac r{|v|}w \rt| =  \frac r{|v|} |  v  - w |.
\]
This implies that $\Theta(v) \subset \Theta(w)^{d_{max},+}$. When $|v| = |w| \ge 1/2$, $d_{max} \le 2 r |v-w|$ and we get 
\[
\Theta(v) \subset \Theta(w)^{2r |v-w|,+}.
\]
This is also true when $|v| = |w| \le 1/2$ because $\Theta(v)=\Theta(w)=\partial B(0,r)$ in that case. In fact, it is especially for that reason that we need a vision cone equals to the full ball for small velocities. 

\medskip
\paragraph{ \bf Step 2. The case $v=\lambda w$ with $\lambda > 0$}

When both $|v|, |w| \le 1$, it is clear to get $R(v) \subset R(w)^{2r|v-w|,+}$ 
since the function $b(v)$ is 2r-Lipschitz, and subsequently this implies $\Theta(v) \subset \Theta(w)^{2r|v-w|,+}$.

When $|v|,|w|\geq 1$, the set $\Theta(v)$ and $\Theta(w)$ are boundaries of cones with different angles, and we get
\[
\Theta(v) \subset \Theta(w)^{r | \theta(v) - \theta(w)|,+} 
\subset \Theta(w)^{r \| \theta\|_{Lip} | v - w|,+}.
\]

In the remaining cases, we can combine the two above case introducing $w' = \frac v {|v|} = \frac w {|w|}$. Since the three points $v,w',w$ are aligned in that order, $|v-w| = |v-w'| + |w'-w|$, the following inclusions hold with $L=(2 \vee \|\theta\|_{Lip}) r $:
\[
\Theta(v) \subset \Theta(w')^{2r|v-w'|,+ } \subset 
\Theta(w)^{2r|v-w'|+ \|\theta\|_{Lip} r|w'-w |,+} 
\subset \Theta(w)^{L|v-w|,+}.
\]
In fact, the above inclusions are valid in the case $|v| \le 1 \le |w|$, but the final inclusion is valid in both case.

\medskip
\paragraph{ \bf Step 3. The general case.}
For any $v,w \in \R^d$, we introduce the middle point $\tilde v$ such that
\[
|\tilde v| = |w| \quad \mbox{and} \quad \tilde v = \lambda v \quad \mbox{with} \quad \lambda \in (0,1),
\]
when $|v| \ge |w|$ and the point defined in the same way but exchanging the role of $v$ and $w$ when $|w| \ge |v|$.
This middle point satisfies 
\[ | v - \tilde v| \le |v-w|, \quad  \text{and} \quad | \tilde v - w| \le |v- w|.
\]
Using the two steps above, we conclude that
\[\Theta(v) \subset \Theta(\tilde v)^{L|v-\tilde v|,+ }
 \subset \Theta(w)^{ L |v-\tilde v| +  2r|\tilde v-w|,+ }
\subset \Theta(w)^{2 L |v-w|,+ },
\] 
which concludes the verification of $({\bf H2})$-$(iv)$.

%%%%%%%%%%%%%%%%%%%%%%%%%%%%%%%
%
%  \section{Further extensions}
%
%%%%%%%%%%%%%%%%%%%%%%%%%%%%%%%

\section{Further extensions}
In this section, we present generalizations of main results with suitable modifications for much more general models: 
\begin{equation}\label{cs}
\left\{ \begin{array}{ll}
\pa_t f + v \cdot \nabla_x f + \nabla_v \cdot (F(f)f) = 0, \qquad (x,v)\in \R^d \times \R^d, \quad t > 0,& \\[2mm]
f(x,v,t)|_{t=0}=f_0(x,v)\qquad (x,v) \in \R^d \times \R^d,&  
\end{array} \right.
\end{equation}
where the force term $F(f)$ can be chosen from two different types:
\begin{equation}\label{force}
F(f)(x,v,t) =
\left\{ \begin{array}{ll}
\displaystyle \int_{\R^d \times \R^d} \psi(x-y)\mb_{K(v)}(x-y) h(w - v) f(y,w)\,dydw
& \mbox{(Cucker-Smale type)}\\[4mm] 
\displaystyle \int_{\R^d \times \R^d} \nabla_x \varphi(x-y)\mb_{K(v)}(x-y) f(y,w)\,dydw & \mbox{(Attractive-Repulsive type)}.  
\end{array} \right.
\end{equation}
Here $\psi$, $h$, and $\varphi$ denote the communication weight, velocity coupling, and interaction potential, respectively. We refer to \cite{HT,CFTV,AIR,MT,MT2}.

Similarly as before, we introduce the particle approximation of the kinetic equation \eqref{cs}:
\begin{equation}\label{cs_ode}
\left\{ \begin{array}{ll}
\dot{X}_i(t) = V_i(t), \quad i=1,\cdots,N,~~t > 0, &  \\[2mm]
\displaystyle \dot{V}_i(t) = \left\{ \begin{array}{ll}
\displaystyle \sum_{j \neq i} m_j \psi(X_i-X_j)\mb_{K(V_i)}(X_i-X_j) h(V_j - V_i) 
& \mbox{(Cucker-Smale type)},\\[4mm] 
\displaystyle \sum_{j \neq i} m_j  \nabla_x \varphi(X_i-X_j)\mb_{K(V_i)}(X_i-X_j) & \mbox{(Attractive-Repulsive type)},
\end{array} \right.
&\\[8mm]
\left( X_i(0), V_i(0) \right) =: \left( X_i^0, V_i^0 \right),\quad i=1,\cdots,N. & 
\end{array} \right.
\end{equation}
 
Then we define the differential inclusion system with respect to the above ODE system \eqref{cs_ode} and the empirical measure $\mu^N$ associated to a solution to this differential inclusion system in a similar fashion with \eqref{sys_di} and \eqref{def_emp}.

By a similar strategy as in Theorems \ref{thm_weak} and \ref{main}, we have the existence of weak solutions to the kinetic equations and mean-field limit.
\begin{theorem}\label{thm_gene} Given an initial data compactly supported in velocity satisfying \eqref{main_ass} and assume that the sensitivity region set-valued function $K(v)$ satisfies ${\bf (H1)}$-${\bf (H2)}$. Then there exists a positive time $T > 0$ such that the system \eqref{cs}-\eqref{force} with $\psi, h, \nabla_x\varphi \in W^{1,\infty}(\R^d)$ admits a unique weak solution $f$ in the sense of Definition {\rm\ref{def_weak}} on the time interval $[0,T]$, which is also compactly supported in velocity. Moreover, $f$ is determined as the push-forward of the initial density through the flow map generated by the Lipschitz velocity field $(v, F(f))$ in phase space and the solutions satisfies the stability estimate \eqref{main_stab}.

Furthermore, we have the estimate of mean-field limit such that 
\[
d_1(f(t),\mu^N(t))\le e^{Ct} d_1(f(0),\mu^N(0)) \quad \mbox{for all} \quad t \in [0,T],
\]
there exists a constant $C$ depending only on $T$, $f_0$, and $d$.
\end{theorem}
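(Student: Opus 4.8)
The plan is to reproduce, with cosmetic modifications, the three-step scheme already used for Theorems \ref{thm_weak} and \ref{main}: regularize the indicator, obtain uniform $L^1\cap L^\infty$ and velocity-support bounds, and close a weak--strong $d_1$ estimate. The only new ingredients are the bounded Lipschitz factors $\psi$, $h$, $\nabla_x\varphi\in W^{1,\infty}(\R^d)$, and they enter as harmless multipliers.

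\textbf{Step 1: regularization and uniform a priori bounds.} As in \eqref{reg-k-CS-si}, I would mollify $\mb_{K(v)}$ in phase space to get $\mb^{\eta,\e}_{K(v)}$, insert it into \eqref{force}, and obtain by classical theory a unique smooth solution $f^{\eta,\e}$, the push-forward of $f_0$ along the associated characteristic flow. First I would record that the $v$-support stays bounded on $[0,T]$ uniformly in $\eta,\e$: for the attractive--repulsive force $|\dot V_i^{\eta,\e}|\le\|\nabla_x\varphi\|_{L^\infty}$, so the support radius grows at most linearly, whereas for the Cucker--Smale force $|\dot V_i^{\eta,\e}|\le\|\psi\|_{L^\infty}\bigl(2\|h\|_{Lip}R^{\eta,\e}_v(t)+|h(0)|\bigr)$, which yields $R'\le C(1+R)$ and hence $R^{\eta,\e}_v(t)\le C(T,f_0)$; this replaces Lemma \ref{lem-gro-vel}. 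Then, exactly as in Proposition \ref{prop-lp}, I would write the equation in non-conservative form and bound $\nabla_v\cdot F^{\eta,\e}$: the contributions of $\nabla_v h(w-v)$ (CS type) are controlled by $\|h\|_{W^{1,\infty}}$, and the contributions of $\nabla_v\mb^{\eta,\e}_{K(v)}$ are controlled, as there, by the discrete Lipschitz estimate $\int|\mb_{K(v+he_i)}-\mb_{K(v)}|\,dy\le Ch$ coming from \textbf{(H2)}-$(ii),(iii)$, times the bounded kernels. This gives $\|\nabla_v\cdot F^{\eta,\e}\|_{L^\infty}\le C\|f^{\eta,\e}\|_{L^1\cap L^\infty}$ and, by Gronwall along characteristics, a uniform bound $\sup_{[0,T]}\|f^{\eta,\e}\|_{L^1\cap L^\infty}\le C$ on a time interval depending only on $\|f_0\|_{L^1\cap L^\infty}$, $d$, and the $W^{1,\infty}$ norms of the kernels.

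\textbf{Step 2: local Lipschitz continuity of $F(f)$.} Mirroring Proposition \ref{prop:Strat}, I would show that any $L^1\cap L^\infty$ density with compact $v$-support produces a force field $F(f)$ that is linearly growing and locally Lipschitz in phase space, by splitting $F(f)(x,v)-F(f)(\tilde x,\tilde v)$ and changing one argument at a time: replacing $\mb_{K(v)}$ by $\mb_{K(\tilde v)}$ is handled via $K(v)\Delta K(\tilde v)\subset\Theta(v)^{C|v-\tilde v|,+}$ and \textbf{(H2)}-$(ii)$ as in \eqref{est_ii}; moving the position argument inside $\mb_{K(\tilde v)}(\cdot-y)$ uses Lemma \ref{lem_est2} together with \textbf{(H2)}-$(ii)$; moving $x$ in the smooth factor $\psi(\cdot-y)$, or $\nabla_x\varphi(\cdot-y)$, uses its Lipschitz norm, and moving $v$ in $h(w-\cdot)$ uses $\|h\|_{Lip}$; the compact $v$-support absorbs the $|w-v|$-type factors. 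The AR type is strictly simpler since the only $v$-dependence sits in the indicator. This gives the bound $|F(f)(x,v)-F(f)(\tilde x,\tilde v)|\le C(1+|v|)|(x,v)-(\tilde x,\tilde v)|$, so the characteristic flow $\mmz$ of $(v,F(f))$ is well-defined and weak solutions are push-forwards of $f_0$ along it.

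\textbf{Step 3: weak--strong $d_1$ stability, convergence and mean-field limit.} Here I would repeat verbatim the computations of Proposition \ref{prop-grow} (for convergence and uniqueness) and of Section 3 together with Theorem \ref{main} (for the particle limit): fix $t_0$, pick an optimal $d_1$-map $\mt^0$, and differentiate $Q(t)=\int|\mmz(t;t_0,\cdot)-\mmz'(t;t_0,\mt^0(\cdot))|\,f(t_0)$ at $t_0^+$. The velocity part is $\le d_1$ as before. In the force part, after inserting $\mt^0$ one now carries the extra factors $\psi$ and $h$ (or $\nabla_x\varphi$); I would add to the decomposition $J=\sum_i J_i$ two new elementary terms, one controlling $\psi(x-y)-\psi(\mt^0_1(x,v)-\mt^0_1(y,w))$ by $\|\psi\|_{Lip}(|x-\mt^0_1(x,v)|+|y-\mt^0_1(y,w)|)$ and one controlling $h(w-v)-h(\mt^0_2(y,w)-\mt^0_2(x,v))$ by $\|h\|_{Lip}(|w-\mt^0_2(y,w)|+|v-\mt^0_2(x,v)|)$, both of which integrate against $f(t_0)\otimes f(t_0)$ to $C\,d_1$ by the Monge transference inequality in Proposition \ref{prop-proper}. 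The indicator-function terms are estimated exactly as the $J_1,\dots,J_4$ of Proposition \ref{prop-grow} (resp. the $J_1,J_2,J_3$ of Section 3), with the bounded kernels absorbed into constants, using \textbf{(H2)}-$(ii),(iii),(iv)$, Lemmas \ref{lem_est1}, \ref{lem_est2}, \ref{rem:BsubThe}, and Fubini in the second position of the indicator. This yields $\frac{d^+}{dt}d_1\le C(d_1+\eta+\e+\eta'+\e')$ for the regularized pairs and $\frac{d^+}{dt}d_1\le Cd_1$ for the weak solution against the empirical measure; Gronwall and passage to the limit $\eta,\e\to0$ in the weak formulation as in Section \ref{sec3.2} (using the auxiliary Lipschitz field $G$) identify the limit as the unique weak solution, give the push-forward characterization, and deliver \eqref{main_stab} and the mean-field estimate.

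\textbf{Main obstacle.} The genuinely delicate point is, exactly as in the base case, the indicator-difference term in which the \emph{position} argument is displaced by the transport map, i.e.\ the analogue of $J_2$ in Proposition \ref{prop-grow}: one is forced to apply Fubini to integrate $\mb_{\pa^{2|x-\mt^0_1(x,v)|}K(v)}(x-y)$ in $x$ and then invoke \textbf{(H2)}-$(ii)$, which is precisely why the whole scheme is confined to $d_1$ (cf.\ Remark \ref{req:R2}). By contrast the new $\psi,h,\nabla_x\varphi$ terms are routine once one has checked $h(0)=0$ (or, more generally, that the velocity-support bound of Step 1 survives); the only bookkeeping to watch is that the final constant depends solely on $T$, $f_0$, $d$, and the fixed $W^{1,\infty}$ norms of the kernels.
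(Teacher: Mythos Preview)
Your proposal is correct and follows essentially the same approach as the paper's proof, which merely sketches the two modifications (velocity-support growth and Lipschitz continuity of $F(f)$ via a four-term splitting $I_1,\dots,I_4$) and otherwise refers back to Theorems~\ref{thm_weak} and~\ref{main}. The only cosmetic difference is that the paper bounds the Cucker--Smale force directly by $\|\psi\|_{L^\infty}\|h\|_{L^\infty}\|f_0\|_{L^1}$ using $h\in W^{1,\infty}\subset L^\infty$, which gives linear growth of the velocity support without your Gronwall step; in particular your parenthetical worry about $h(0)=0$ is unnecessary.
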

\begin{proof}Since the proof is very similar to that of Theorems \ref{thm_weak} and \ref{main} as mentioned before, we only give essential parts of the proof. Let us also consider the Cucker-Smale type force in \eqref{force} for $F(f)$ being the attractive-repulsive force treated analogously.

$\bullet$ {\it Support estimate of the density in velocity:} For the existence of weak solutions to the kinetic equation \eqref{cs}-\eqref{force}, we regularize the solution similarly as in \eqref{reg-k-CS-si} and define the forward $Z^{\eta,\e}(s) := \left( X^{\eta,\e}(s;0,x,v), V^{\eta,\e}(s;0,x,v)\right)$ with regularization parameters $\eta$ and $\e$ satisfying the following ODE system:
$$
\begin{aligned}
\frac{dX^{\eta,\e}(s)}{ds} &= V^{\eta,\e}(s),\cr
\frac{dV^{\eta,\e}(s)}{ds} &= F^{\eta,\e}(X^{\eta,\e},V^{\eta,\e},s) = \int_{\R^d \times \R^d} \psi(x-y)\mb_{K(V^{\eta,\e})}^{\eta,\e}(X^{\eta,\e}-y) h(w - V^{\eta,\e}) f^{\eta,\e}(y,w)\,dydw.
\end{aligned}
$$
Note that the force term $F^{\eta,\e}$ is bounded from above by $\|(\psi,h)\|_{L^\infty}\|f_0\|_{L^1}$ and this yields
\[
R^{\eta,\e}_v(t) \leq R^0_v + \|(\psi,h)\|_{L^\infty}\|f_0\|_{L^1} t,
\]
i.e., the support of $f^{\eta,\e}$ in velocity linearly increases in time $t$, however it does not depend on the regularization parameters $\eta$ and $\e$. 

$\bullet$ {\it Lipschitz continuity of the force field $F(f)$}: Let us assume that there exists a solution $f$ to the equation \eqref{cs}-\eqref{force} in the sense of Definition \eqref{def_weak} and $f$ has compact support in velocity. Then we obtain
$$\begin{aligned}
&F(f)(x,v) - F(f)(\tilde x, \tilde v)\cr
&= \int_{\R^d \times \R^d} \Big(\psi(x-y)\mb_{K(v)}(x-y)h(w - v) - \psi(\tilde x-y)\mb_{K(\tilde v)}(\tilde x-y)h(w - \tilde v)\Big)f(y,w)\,dydw\cr
&= \int_{\R^d \times \R^d} \lt( \psi(x-y) - \psi(\tilde x - y)\rt)\mb_{K(v)}(x-y)h(w-v) f(y,w)\,dydw\cr
&\quad + \int_{\R^d \times \R^d} \psi(\tilde x-y)\lt(\mb_{K(v)}(x-y) - \mb_{K(\tilde v)}(x-y)\rt)h(w-v)f(y,w)\,dydw\cr
&\quad + \int_{\R^d \times \R^d}  \psi(\tilde x-y)\lt( \mb_{K(\tilde v)}(x-y) - \mb_{K( \tilde v)}(\tilde x-y)\rt)h(w-v)f(y,w)\,dydw\cr
&\quad + \int_{\R^d \times \R^d} \psi(\tilde x-y) \mb_{K(\tilde v)}(\tilde x - y)\lt(h(w - v) - h(w- \tilde v)\rt)f(y,w)\,dydw\cr
&=: \sum_{i=1}^4 I_i.
\end{aligned}$$

In comparsion to \eqref{est_lip}, we just need to estimate an additional term. By using the similar argument as in \eqref{est_ii} together with the regularity of communication weight $\psi$ and velocity coupling $h$, we estimate each term $I_i,i=1,\cdots,4$ as
$$\begin{aligned}
I_1 &\leq \|(\nabla_x \psi, h)\|_{L^\infty}\|f_0\|_{L^1}|x - \tilde x|, \cr
I_2 &\leq C(\|\rho\|_{L^\infty}+\|f_0\|_{L^1})(\psi, h)\|_{L^\infty}|v - \tilde v|,\cr
I_3 &\leq C(\|\rho\|_{L^\infty}+\|f_0\|_{L^1})(\psi,h)\|_{L^\infty}| x - \tilde x|, \cr
I_4 &\leq \|(\psi,\nabla_x h)\|_{L^\infty}\|f_0\|_{L^1}|v - \tilde v|,\cr
\end{aligned}$$
where $(\bf{H1})$-$(\bf{H2})$ have been used. Hence we have 
\[
|F(f)(x,v) - F(f)(\tilde x, \tilde v)| \leq C|(x,v) - (\tilde x, \tilde v)|,
\]
and this concludes the proof.
\end{proof}

\begin{remark}Our strategy is also applicable to first-order swarming models with sensing zones \cite{CPT}. More precisely, let $\rho = \rho(x,t)$ be the probability density of individuals at position $x \in \R^d$ and time $t > 0$ satisfying
\begin{equation}\label{f_conti}
\left\{ \begin{array}{ll}
\pa_t \rho + \nabla_x \cdot (\rho u) = 0, &x \in \R^d, \quad t > 0,\\[2mm]
\displaystyle u(x,t) := \int_{\R^d} \mb_{K(w(x))} (x-y) \nabla_x \varphi(x-y)\rho(y)\,dy, &x \in \R^d, \quad t > 0, \\[4mm]
\rho(x,t)|_{t=0}=\rho_0(x), & x \in \R^d,
\end{array} \right.
\end{equation}
where $\varphi \in W^{1,\infty}(\R^d)$ and $w$ is a given orientational field satisfying $w \in W^{1,\infty}(\R^d)$ and $|w| \geq w_0 > 0$. Here the velocity field $u$ is non-locally computed in terms of the density $\rho$. Suppose the sensing zone $K(w(x))$ satisfies our assumptions $(\bf{H1})$-$(\bf{H2})$, then the continuity equation \eqref{f_conti} is well approximated by 
\[
\left\{ \begin{array}{ll}
\displaystyle \dot{X}_i(t) = \sum_{j \neq i}m_j \nabla_x\varphi(X_i - X_j)\mb_{K(w(X_i))}(X_i - X_j), &\\[5mm]
X_i(0)=X_i^0, \qquad i=1,\cdots,N.& 
\end{array} \right.
\]
via the associated differential inclusion system in the sense of Theorem \ref{thm_gene}. Note that taking $K$ to be $C(r,w(x),\theta_0)$ with $\theta_0 \in (0,\pi]$ satisfies these conditions. 
\end{remark}

%%%%%%%%%%%%%%%%%%%%%%%%%%%%%%%%%%%%%%%%%%%%%%%%%%%%%%%%%%%%%%%%%%%%%%%%%%%%%%%%%%%%%%%%%%%%%%%%%%%%%%%%%%%%%%%%%%%%%%%%%%%%%%%%%%%%%%%%%%%%%%
%
%
%    Acknowledgments
%
%
%%%%%%%%%%%%%%%%%%%%%%%%%%%%%%%%%%%%%%%%%%%%%%%%%%%%%%%%%%%%%%%%%%%%%%%%%%%%%%%%%%%%%%%%%%%%%%%%%%%%%%%%%%%%%%%%%%%%%%%%%%%%%%%%%%%%%%%%%%%%%%%%
\section*{Acknowledgments}
\small{JAC was partially supported by the project MTM2011-27739-C04-02
DGI (Spain) and from the Royal Society by a Wolfson Research Merit Award. YPC was partially supported by Basic Science Research Program through the National Research Foundation of Korea (NRF) funded by the Ministry of Education, Science and Technology (2012R1A6A3A03039496). YPC also acknowledges the support of the ERC-Starting Grant HDSPCONTR ``High-Dimensional Sparse Optimal Control''. JAC and YPC were supported by EPSRC grant with reference
EP/K008404/1.}

%%%%%%%%%%%%%%%%%%%%%%%%%%%%%%%%%%%%%%%%%%%%%%%%%%%%%%%%%%%%%%%%%%%%%%%%%%%%%%%%%%%%%%%%%%%%%%%%%%%%%%%%%%%%%%%%%%%%%%%%%%%%%%%%%%%%%%%%%%%%%%

%
%         Section: References
%
%
%%%%%%%%%%%%%%%%%%%%%%%%%%%%%%%%%%%%%%%%%%%%%%%%%%%%%%%%%%%%%%%%%%%%%%%%%%%%%%%%%%%%%%%%%%%%%%%%%%%%%%%%%%%%%%%%%%%%%%%%%%%%%%%%%%%%%%%%%%%%%%%%

\end{document}